\newcommand{\cmark}{\checkmark}
\newcommand{\xmark}{\mbox{\ding{53}}}
\def\xstringversion     {1.6}
\def\xstringdate        {2012/10/24}
\edef\CurrentAtCatcode  {\the\catcode`\@}
\newwrite\@xs@message
\newif\if@xs@empty
\write\m@ne{Package: xstring \xstringdate\space\space v\xstringversion\space\space String manipulations (C Tellechea)}%
	\long\def\@firstoftwo#1#2{#1}
	\long\def\@secondoftwo#1#2{#2}
	\long\def\@gobble#1{}
	\long\def\@ifnextchar#1#2#3{%
		\let\reserved@d=#1%
		\def\reserved@a{#2}%
		\def\reserved@b{#3}%
		\futurelet\@let@arg\@ifnch}
	\def\@ifnch{%
		\ifx\@let@arg\@sptoken
			\let\reserved@c\@xifnch
		\else
			\ifx\@let@arg\reserved@d
				\let\reserved@c\reserved@a
			\else
				\let\reserved@c\reserved@b
			\fi
		\fi
		\reserved@c}
	\def\:{\let\@sptoken= } \:
	\def\:{\@xifnch} \expandafter\def\: {\futurelet\@let@arg\@ifnch}
	\def\@ifstar#1{\@ifnextchar *{\@firstoftwo{#1}}}
	\long\def\@testopt#1#2{\@ifnextchar[{#1}{#1[{#2}]}}
	\def\@empty{}
\def\@xs@testempty#1{%
	\expandafter\ifx\expandafter\@empty\detokenize{#1}\@empty\@xs@emptytrue\else\@xs@emptyfalse\fi}
\def\@xs@MakeVerb{
	\begingroup
		\def\do##1{\catcode`##112\relax}%
		\dospecials
		\obeyspaces
		\@xs@ReadVerb
}
\def\setverbdelim#1{
	\expandafter\@xs@testempty\expandafter{\@gobble#1}%
	\if@xs@empty
	\else
		\begingroup
			\newlinechar`\^^J%
			\immediate\write\@xs@message
			{Package xstring Warning: verb delimiter is not a single token on input line \the\inputlineno^^J}%
		\endgroup
	\fi
	\def\@xs@ReadVerb##1#1##2#1{
		\endgroup
		\@xs@afterreadverb{##2}}
}
\def\verbtocs#1{%
	\def\@xs@afterreadverb##1{\def#1{##1}}%
	\@xs@MakeVerb
}
	\gdef\tokenize#1#2{%
		\begingroup
			\@xs@def\@xs@reserved@A{#2}
			\def\@xs@AssignResult^^00##1^^00\@xs@nil{\gdef#1{##1}}
			\everyeof{\@xs@nil}
			\endlinechar\m@ne
			\catcode\z@12\relax
			\expandafter\@xs@AssignResult\scantokens\expandafter{\expandafter^^00\@xs@reserved@A^^00}
		\endgroup
	}%
\def\@xs@twochars{^^00}%
\xdef\@xs@twochars{\@xs@twochars^^00}%
\edef\@xs@reserved@A{\def\noexpand\@xs@AssignResult##1\@xs@twochars}
\def\tokenize#1#2{%
	\begingroup
		\@xs@def\@xs@reserved@A{#2}
		\everyeof\expandafter{\@xs@twochars#1}
		\endlinechar\m@ne
		\expandafter\@xs@AssignResult\scantokens\expandafter{\expandafter\relax\@xs@reserved@A}
}%
\def\@xs@ReturnResult#1#2{%
	\def\@xs@argument@A{#1}%
	\@xs@testempty{#2}%
	\if@xs@empty
		\@xs@argument@A
	\else
		\let#2\@xs@argument@A
	\fi
}
\def\normalexpandarg{%
	\let\@xs@def\def
	\def\@xs@expand##1{\unexpanded\expandafter{##1}}}
\def\expandarg{%
	\let\@xs@def\def
	\def\@xs@expand##1{\unexpanded\expandafter\expandafter\expandafter{##1}}%
}
\def\fullexpandarg{%
	\let\@xs@def\edef
	\def\@xs@expand##1{##1}
}
\def\saveexpandmode{\let\@xs@saved@def\@xs@defarg\let\@xs@saved@expand\@xs@expand}
\def\restoreexpandmode{\let\@xs@defarg\@xs@saved@def\let\@xs@expand\@xs@saved@expand}
\def\@xs@ifbeginwithbrace#1{%
	\csname @%
		\expandafter\@gobble\string{%
		\expandafter\@gobble\expandafter{\expandafter{\string#1}%
		\expandafter\expandafter\expandafter\expandafter\expandafter\expandafter\expandafter\expandafter\expandafter\expandafter\expandafter\expandafter\expandafter\expandafter\expandafter\@firstoftwo
		\expandafter\expandafter\expandafter\expandafter\expandafter\expandafter\expandafter\@gobble
		\expandafter\expandafter\expandafter\@gobble
		\expandafter\expandafter\expandafter{\expandafter\string\expandafter}\string}%
		\expandafter\@gobble\string}%
		\@secondoftwo{first}{second}oftwo%
	\endcsname
}
\def\@xs@returnfirstsyntaxunit#1#2{%
	\def\@xs@groupfound{\expandafter\def\expandafter#2\expandafter{\expandafter{#2}}\@xs@gobbleall}
	\def\@xs@assignfirsttok##1##2\@xs@nil{\let\@xs@toks0\def#2{##1}}%
	\def\@xs@testfirsttok{%
		\let\@xs@next\@xs@assignfirsttok
		\ifx\@xs@toks\bgroup
			\expandafter\@xs@ifbeginwithbrace\expandafter{\@xs@argument}{\def\@xs@next{\afterassignment\@xs@groupfound\def#2}}{}%
		\fi
		\@xs@next}%
	\def\@xs@argument{#1}%
	\edef\@xs@next{\expandafter\@xs@beforespace\detokenize{#1} \@xs@nil}
	\ifx\@xs@next\@empty
		\def\@xs@next{\expandafter\ifx\expandafter\@empty\detokenize\expandafter{\@xs@argument}\@empty\let#2\@empty\else\def#2{ }\let\@xs@toks0\fi}%
	\else
		\def\@xs@next{\expandafter\futurelet\expandafter\@xs@toks\expandafter\@xs@testfirsttok\@xs@argument\@xs@nil}%
	\fi
	\@xs@next
}
\def\@xs@testsecondtoken#1\@xs@nil{\@xs@ifbeginwithbrace{#1}}
\def\@xs@gobblespacebeforebrace#1#{}
\def\@xs@removefirstsyntaxunit#1#2{%
	\def\@xs@argument{#1}%
	\expandafter\expandafter\expandafter\ifx\expandafter\expandafter\expandafter\@empty\expandafter\@xs@beforespace\detokenize\expandafter{\@xs@argument} \@xs@nil\@empty
		\expandafter\@xs@testempty\expandafter{\@xs@argument}%
		\if@xs@empty
			\let#2\@empty
		\else
			\afterassignment\@xs@testsecondtoken
			\expandafter\let\expandafter\@xs@secontoken\expandafter=\expandafter\@sptoken\@xs@argument\@xs@@nil\@xs@nil
				{\expandafter\expandafter\expandafter\def\expandafter\expandafter\expandafter#2%
				 \expandafter\expandafter\expandafter{\expandafter\@xs@gobblespacebeforebrace\@xs@argument}}%
				{\expandafter\expandafter\expandafter\def\expandafter\expandafter\expandafter#2%
				 \expandafter\expandafter\expandafter{\expandafter\@xs@behindspace\@xs@argument\@xs@nil}}%
		\fi
	\else
		\expandafter\expandafter\expandafter\def\expandafter\expandafter\expandafter#2%
		\expandafter\expandafter\expandafter{\expandafter\@gobble\@xs@argument}%
	\fi
}
\def\@xs@beforespace#1 #2\@xs@nil{#1}
\def\@xs@behindspace#1 #2\@xs@nil{#2}
\def\@xs@returnfirstsyntaxunit@ii#1#2\@xs@nil{#1}
\def\@xs@gobbleall#1\@xs@nil{}
\def\@xs@expand@and@detokenize#1#2{%
	\def#1{#2}%
	\expandafter\edef\expandafter#1\expandafter{\@xs@expand#1}
	\edef#1{\detokenize\expandafter{#1}}
}
\def\@xs@expand@and@assign#1#2{\@xs@def#1{#2}}
\def\@xs@edefaddtomacro#1#2{\edef#1{\unexpanded\expandafter{#1}#2}}
\def\@xs@addtomacro#1#2{\expandafter\def\expandafter#1\expandafter{#1#2}}
\def\@xs@argstring{0########1########2########3########4########5########6########7########8########9}
\def\@xs@DefArg#1{\def\@xs@defarg0##1#1##2\@xs@nil{\def\@xs@myarg{##1#1}}\expandafter\@xs@defarg\@xs@argstring\@xs@nil}
\def\@xs@DefArg@#1{\expandafter\@xs@defarg@\expandafter{\number\numexpr#1+1}}
\def\@xs@defarg@#1{\def\@xs@defarg0##11##2#1##3\@xs@nil{\def\@xs@myarg{[##11]##2#1}}\expandafter\@xs@defarg\@xs@argstring\@xs@nil}
\def\@xs@OneArg#1{\expandafter\@xs@onearg\expandafter{\number\numexpr#1-1}{#1}}
\def\@xs@onearg#1#2{\def\@xs@defarg##1#1##2#2##3\@xs@nil{\def\@xs@myarg{##2#2}}\expandafter\@xs@defarg\@xs@argstring\@xs@nil}
\def\@xs@BuildLines#1#2#3#4{%
	\let\@xs@newlines\@empty
	\let\@xs@newargs\@empty
	\def\@xs@buildlines##1{%
		\expandafter\@xs@OneArg\expandafter{\number\numexpr##1+#1-1}%
		\edef\@xs@reserved@B{\noexpand\@xs@expand\csname @xs@arg@\romannumeral\numexpr##1\endcsname}%
		\ifnum##1=\@ne
			\@xs@testempty{#3}%
			\if@xs@empty
				\expandafter\@xs@addtomacro\expandafter\@xs@newargs\expandafter{\expandafter{\@xs@reserved@B}}%
				\edef\@xs@reserved@B{\ifnum##1>#4 @xs@def\else @xs@assign\fi}%
			\else
				\expandafter\@xs@addtomacro\expandafter\@xs@newargs\expandafter{\expandafter[\@xs@reserved@B]}%
				\def\@xs@reserved@B{@xs@def}
			\fi
		\else
			\expandafter\@xs@addtomacro\expandafter\@xs@newargs\expandafter{\expandafter{\@xs@reserved@B}}%
			\edef\@xs@reserved@B{\ifnum##1>#4 @xs@def\else @xs@assign\fi}%
		\fi
		\edef\@xs@newlines{\unexpanded\expandafter{\@xs@newlines}\expandafter\noexpand\csname\@xs@reserved@B\endcsname\expandafter\noexpand\csname @xs@arg@\romannumeral\numexpr##1\endcsname{\@xs@myarg}}%
		\ifnum##1<#2\relax
			\def\@xs@next{\expandafter\@xs@buildlines\expandafter{\number\numexpr##1+1}}%
			\expandafter\@xs@next
		\fi}%
	\@xs@buildlines\@ne
}
\def\@xs@newmacro{%
	\@ifstar
		{\let\@xs@reserved@D\@empty\@xs@newmacro@}
		{\let\@xs@reserved@D\relax\@xs@newmacro@0}%
}
\def\@xs@newmacro@#1#2#3#4#5{%
	\edef\@xs@reserved@A{@xs@\expandafter\@gobble\string#2}%
	\edef\@xs@reserved@C{\expandafter\noexpand\csname\@xs@reserved@A @\ifx\@empty#3\@empty @\fi\endcsname}%
	\edef\@xs@reserved@B{%
		\ifx\@empty\@xs@reserved@D
			\def\noexpand#2{\noexpand\@ifstar
				{\let\noexpand\@xs@assign\noexpand\@xs@expand@and@detokenize\expandafter\noexpand\@xs@reserved@C}%
				{\let\noexpand\@xs@assign\noexpand\@xs@expand@and@assign\expandafter\noexpand\@xs@reserved@C}%
			}%
		\else
			\def\noexpand#2{\let\noexpand\@xs@assign\noexpand\@xs@expand@and@assign\expandafter\noexpand\@xs@reserved@C}%
		\fi
		\ifx\@empty#3\@empty
		\else
			\def\expandafter\noexpand\@xs@reserved@C{%
				\noexpand\@testopt{\expandafter\noexpand\csname\@xs@reserved@A @@\endcsname}{\ifx\@xs@def\edef#3\else\unexpanded{#3}\fi}}%
		\fi
	}%
	\@xs@reserved@B
	\ifx\@empty#3\@empty
		\@xs@BuildLines1{#4}{#3}{#1}%
		\@xs@DefArg{#4}%
	\else
		\expandafter\@xs@BuildLines\expandafter1\expandafter{\number\numexpr#4+1}{#3}{#1}%
		\@xs@DefArg@{#4}%
	\fi
	\edef\@xs@reserved@B{\def\expandafter\noexpand\csname\@xs@reserved@A @@\endcsname\@xs@myarg}%
	\edef\@xs@reserved@C{\unexpanded\expandafter{\@xs@newlines}\edef\noexpand\@xs@call}%
	\edef\@xs@reserved@D{%
		\noexpand\noexpand\expandafter\noexpand\csname\@xs@reserved@A\endcsname\unexpanded\expandafter{\@xs@newargs}%
	}%
	\ifnum#5=\@ne\edef\@xs@reserved@D{\noexpand\noexpand\noexpand\@testopt{\unexpanded\expandafter{\@xs@reserved@D}}{}}\fi
	\@xs@edefaddtomacro\@xs@reserved@C{{\unexpanded\expandafter{\@xs@reserved@D}}\noexpand\@xs@call}%
	\@xs@edefaddtomacro\@xs@reserved@B{{\unexpanded\expandafter{\@xs@reserved@C}}}%
	\@xs@reserved@B
	\edef\@xs@reserved@B{%
		\def\expandafter\noexpand\csname\@xs@reserved@A\endcsname
			\@xs@myarg\ifnum#5=\@ne[\unexpanded{##}\number\numexpr\ifx\@empty#3\@empty#4+1\else#4+2\fi]\fi
	}%
	\@xs@reserved@B
}
\def\@xs@read@reserved@C{%
	\expandafter\@xs@testempty\expandafter{\@xs@reserved@C}%
	\if@xs@empty
		\ifnum\@xs@nestlevel=\z@
			\let\@xs@next\relax
		\else
			\let\@xs@next\@xs@atendofgroup
		\fi
	\else
		\expandafter\@xs@returnfirstsyntaxunit\expandafter{\@xs@reserved@C}\@xs@reserved@A
		\expandafter\@xs@removefirstsyntaxunit\expandafter{\@xs@reserved@C}\@xs@reserved@C
		\let\@xs@next\@xs@read@reserved@C
		\@xs@exploregroups
		\ifx\bgroup\@xs@toks
			\advance\integerpart\@ne
			\begingroup
				\expandafter\def\expandafter\@xs@reserved@C\@xs@reserved@A
				\@xs@manage@groupID
				\let\@xs@nestlevel\@ne
				\integerpart\z@
				\@xs@atbegingroup
		\else
			\global\advance\decimalpart\@ne
			\@xs@atnextsyntaxunit
		\fi
	\fi
	\@xs@next
}
\def\@xs@read@reserved@D{%
	\expandafter\@xs@testempty\expandafter{\@xs@reserved@D}%
	\if@xs@empty
		\ifnum\@xs@nestlevel=\z@
			\let\@xs@next\relax
		\else
			\let\@xs@next\@xs@atendofgroup
		\fi
	\else
		\expandafter\expandafter\expandafter\@xs@IfBeginWith@i\expandafter\expandafter\expandafter{\expandafter\@xs@reserved@D\expandafter}\expandafter{\@xs@reserved@E}%
			{\global\advance\decimalpart\@ne
			\let\@xs@reserved@D\@xs@reserved@A
			\@xs@atoccurfound
			}%
			{\expandafter\@xs@returnfirstsyntaxunit\expandafter{\@xs@reserved@D}\@xs@reserved@A
			\expandafter\@xs@removefirstsyntaxunit\expandafter{\@xs@reserved@D}\@xs@reserved@D
			\let\@xs@next\@xs@read@reserved@D
			\@xs@exploregroups
			\ifx\bgroup\@xs@toks
				\advance\integerpart\@ne
				\begingroup
					\expandafter\def\expandafter\@xs@reserved@D\@xs@reserved@A
					\@xs@manage@groupID
					\let\@xs@reserved@C\@empty
					\let\@xs@nestlevel\@ne
					\integerpart\z@
			\else
				\expandafter\@xs@addtomacro\expandafter\@xs@reserved@C\expandafter{\@xs@reserved@A}%
			\fi
			}%
	\fi
	\@xs@next
}
\@xs@newmacro\StrRemoveBraces{}{1}{1}{%
	\def\@xs@reserved@C{#1}%
	\let\@xs@reserved@B\@empty
	\let\@xs@nestlevel\z@
	\@xs@StrRemoveBraces@i
	\expandafter\@xs@ReturnResult\expandafter{\@xs@reserved@B}{#2}%
}
\def\@xs@StrRemoveBraces@i{%
	\expandafter\@xs@testempty\expandafter{\@xs@reserved@C}%
	\if@xs@empty
		\ifnum\@xs@nestlevel=\z@
			\let\@xs@next\relax
		\else
			\expandafter\endgroup
			\expandafter\@xs@addtomacro\expandafter\@xs@reserved@B\expandafter{\@xs@reserved@B}%
			\let\@xs@next\@xs@StrRemoveBraces@i
		\fi
	\else
		\expandafter\@xs@returnfirstsyntaxunit\expandafter{\@xs@reserved@C}\@xs@reserved@A
		\expandafter\@xs@removefirstsyntaxunit\expandafter{\@xs@reserved@C}\@xs@reserved@C
		\let\@xs@next\@xs@StrRemoveBraces@i
		\ifx\bgroup\@xs@toks
			\ifx\@xs@exploregroups\relax
				\begingroup
					\expandafter\def\expandafter\@xs@reserved@C\@xs@reserved@A
					\let\@xs@nestlevel\@ne
					\integerpart\z@
					\let\@xs@reserved@B\@empty
			\else
				\expandafter\@xs@addtomacro\expandafter\@xs@reserved@B\@xs@reserved@A
			\fi
		\else
			\expandafter\@xs@addtomacro\expandafter\@xs@reserved@B\expandafter{\@xs@reserved@A}%
		\fi
	\fi
	\@xs@next
}
\def\@xs@cutafteroccur#1#2#3{%
	\ifnum#3<\@ne\expandafter\@firstoftwo\else\expandafter\@secondoftwo\fi
		{\let\@xs@reserved@C\@empty\let\@xs@reserved@E\@empty\let\groupID\@empty}
		{\@xs@cutafteroccur@i{#1}{#2}{#3}}%
}
\def\@xs@cutafteroccur@i#1#2#3{%
	\def\@xs@reserved@D{#1}\let\@xs@reserved@C\@empty\def\@xs@reserved@E{#2}%
	\decimalpart\z@\integerpart\z@\def\groupID{0}\let\@xs@nestlevel\z@
	\def\@xs@atendofgroup{%
		\expandafter\endgroup
		\expandafter\@xs@addtomacro\expandafter\@xs@reserved@C\expandafter{\expandafter{\@xs@reserved@C}}%
		\@xs@read@reserved@D}%
	\def\@xs@atoccurfound{%
		\ifnum\decimalpart=\numexpr(#3)\relax
			\global\let\@xs@reserved@D\@xs@reserved@D
			\global\let\@xs@reserved@C\@xs@reserved@C
			\global\let\groupID\groupID
			\@xs@exitallgroups
			\let\@xs@next\relax
		\else
			\expandafter\@xs@addtomacro\expandafter\@xs@reserved@C\expandafter{\@xs@reserved@E}%
			\let\@xs@next\@xs@read@reserved@D
		\fi}%
	\@xs@read@reserved@D
	\def\@xs@argument@A{#2}%
	\ifnum\decimalpart=\numexpr(#3)\relax 
		\let\@xs@reserved@E\@xs@reserved@D
		\expandafter\expandafter\expandafter\def\expandafter\expandafter\expandafter\@xs@reserved@D\expandafter\expandafter\expandafter{\expandafter\@xs@reserved@C\@xs@argument@A}%
	\else
		\let\@xs@reserved@C\@empty\let\@xs@reserved@E\@empty\let\groupID\@empty
	\fi
}
	\def\@xs@argument@A{#2}\def\@xs@argument@B{#3}%
\@xs@testempty\expandafter{\@xs@reserved@D}%
	\def\@xs@argument@A{#1}\def\@xs@argument@B{#2}%
\@xs@testempty\expandafter{\@xs@argument@B}%
		\let\@xs@next\@secondoftwo
		\def\@xs@next{\expandafter\expandafter\expandafter\@xs@IfBeginWith@i
			\expandafter\expandafter\expandafter{\expandafter\@xs@argument@A\expandafter}\expandafter{\@xs@argument@B}}%
\def\@xs@IfBeginWith@i#1#2{%
	\def\@xs@argument@A{#1}\def\@xs@argument@B{#2}%
	\expandafter\@xs@testempty\expandafter{\@xs@argument@B}%
	\if@xs@empty
		\let\@xs@next\@firstoftwo
	\else
		\expandafter\@xs@testempty\expandafter{\@xs@argument@A}
		\if@xs@empty
			\let\@xs@next\@secondoftwo
		\else
			\expandafter\@xs@returnfirstsyntaxunit\expandafter{\@xs@argument@B}\@xs@reserved@B
			\expandafter\@xs@returnfirstsyntaxunit\expandafter{\@xs@argument@A}\@xs@reserved@A
			\ifx\@xs@reserved@A\@xs@reserved@B
				\expandafter\@xs@removefirstsyntaxunit\expandafter{\@xs@argument@B}\@xs@reserved@B
				\expandafter\@xs@removefirstsyntaxunit\expandafter{\@xs@argument@A}\@xs@reserved@A
				\def\@xs@next{
					\expandafter\expandafter\expandafter\@xs@IfBeginWith@i
					\expandafter\expandafter\expandafter{\expandafter\@xs@reserved@A\expandafter}\expandafter{\@xs@reserved@B}}%
			\else
				\let\@xs@next\@secondoftwo
			\fi
		\fi
	\fi
	\@xs@next
}
	\def\@xs@argument@A{#1}\def\@xs@argument@B{#2}%
		\let\@xs@reserved@A\@secondoftwo
			\let\@xs@reserved@A\@secondoftwo
\@xs@testempty\expandafter{\@xs@reserved@C}%
				\let\@xs@reserved@A\@firstoftwo
				\let\@xs@reserved@A\@secondoftwo
\def\@xs@IfSubStrBefore@i[#1,#2]#3#4#5{%
	\def\@xs@reserved@C{#3}%
	\ifx\@xs@exploregroups\relax
		\let\@xs@reserved@B\@empty
		\let\@xs@nestlevel\z@
		\@xs@StrRemoveBraces@i
		\let\@xs@reserved@C\@xs@reserved@B
	\fi
	\def\@xs@reserved@A{#5}%
	\expandafter\expandafter\expandafter\@xs@cutafteroccur\expandafter\expandafter\expandafter{\expandafter\@xs@reserved@C\expandafter}\expandafter{\@xs@reserved@A}{#2}%
	\def\@xs@reserved@A{#4}%
	\expandafter\expandafter\expandafter\@xs@cutafteroccur\expandafter\expandafter\expandafter{\expandafter\@xs@reserved@C\expandafter}\expandafter{\@xs@reserved@A}{#1}%
	\let\groupID\@empty
	\expandafter\@xs@testempty\expandafter{\@xs@reserved@C}%
	\if@xs@empty
		\expandafter\@secondoftwo
	\else
		\expandafter\@firstoftwo
	\fi
}
\def\@xs@IfSubStrBehind@i[#1,#2]#3#4#5{\@xs@IfSubStrBefore@i[#2,#1]{#3}{#5}{#4}}
\def\@xs@formatnumber#1#2{%
	\def\@xs@argument@A{#1}%
	\@xs@testempty{#1}%
	\if@xs@empty
		\def#2{0X}
	\else
		\@xs@returnfirstsyntaxunit{#1}\@xs@reserved@A
		\def\@xs@reserved@B{+}%
		\ifx\@xs@reserved@A\@xs@reserved@B
			\expandafter\@xs@removefirstsyntaxunit\expandafter{\@xs@argument@A}\@xs@reserved@C
			\expandafter\@xs@testempty\expandafter{\@xs@reserved@C}%
			\if@xs@empty
			 	\def#2{+0X}%
			 \else
			 	\expandafter\def\expandafter#2\expandafter{\expandafter+\expandafter0\@xs@reserved@C}%
			 \fi
		\else
			\def\@xs@reserved@B{-}%
			\ifx\@xs@reserved@A\@xs@reserved@B
				\expandafter\@xs@removefirstsyntaxunit\expandafter{\@xs@argument@A}\@xs@reserved@A
				\expandafter\@xs@testempty\expandafter{\@xs@reserved@A}%
				\if@xs@empty
				 	\def#2{-0X}%
				 \else
				 	\expandafter\def\expandafter#2\expandafter{\expandafter-\expandafter0\@xs@reserved@A}%
				 \fi
			\else
				\expandafter\def\expandafter#2\expandafter{\expandafter0\@xs@argument@A}%
			\fi
		\fi
	\fi
}
\@xs@newmacro\IfInteger{}{1}{0}{%
	\@xs@formatnumber{#1}\@xs@reserved@A
	\decimalpart\z@
	\afterassignment\@xs@defafterinteger\integerpart\@xs@reserved@A\relax\@xs@nil
	\let\@xs@after@intpart\@xs@afterinteger
	\expandafter\@xs@testdot\@xs@afterinteger\@xs@nil
	\ifx\@empty\@xs@afterdecimal
		\ifnum\decimalpart=\z@
			\let\@xs@next\@firstoftwo
		\else
			\let\@xs@afterinteger\@xs@after@intpart
			\let\@xs@next\@secondoftwo
		\fi
	\else
		\let\@xs@afterinteger\@xs@after@intpart
		\let\@xs@next\@secondoftwo
	\fi
	\@xs@next
}
\@xs@newmacro\IfDecimal{}{1}{0}{%
	\@xs@formatnumber{#1}\@xs@reserved@A
	\decimalpart\z@
	\afterassignment\@xs@defafterinteger\integerpart\@xs@reserved@A\relax\@xs@nil
	\expandafter\@xs@testdot\@xs@afterinteger\@xs@nil
	\ifx\@empty\@xs@afterdecimal
		\expandafter\@firstoftwo
	\else
		\expandafter\@secondoftwo
	\fi
}
\def\@xs@defafterinteger#1\relax\@xs@nil{\def\@xs@afterinteger{#1}}
\def\@xs@testdot{%
	\let\xs@decsep\@empty
	\@ifnextchar.%
		{\def\xs@decsep{.}\@xs@readdecimalpart}%
		{\@xs@testcomma}%
}
\def\@xs@testcomma{%
	\@ifnextchar,%
		{\def\xs@dessep{,}\@xs@readdecimalpart}%
		{\@xs@endnumber}%
}
\def\@xs@readdecimalpart#1#2\@xs@nil{%
	\ifx\@empty#2\@empty
		\def\@xs@reserved@A{0X}%
	\else
		\def\@xs@reserved@A{0#2}%
	\fi
	\afterassignment\@xs@defafterinteger\decimalpart\@xs@reserved@A\relax\@xs@nil
	\expandafter\@xs@endnumber\@xs@afterinteger\@xs@nil
}
\def\@xs@endnumber#1\@xs@nil{\def\@xs@afterdecimal{#1}}
\def\@xs@IfStrEqFalse@i#1#2{\let\@xs@reserved@A\@secondoftwo}
\def\@xs@IfStrEqFalse@ii#1#2{
	\@xs@IfDecimal{#1}%
		{\@xs@IfDecimal{#2}%
			{\ifdim#1pt=#2pt
				\let\@xs@reserved@A\@firstoftwo
			\else
				\let\@xs@reserved@A\@secondoftwo
			\fi
			}%
			{\let\@xs@reserved@A\@secondoftwo}
		}%
		{\let\@xs@reserved@A\@secondoftwo}
}
\def\@xs@TestEqual#1#2{
	\def\@xs@reserved@A{#1}\def\@xs@reserved@B{#2}%
	\ifx\@xs@reserved@A\@xs@reserved@B
		\let\@xs@reserved@A\@firstoftwo
	\else
		\expandafter\expandafter\expandafter\@xs@reserved@D\expandafter\expandafter\expandafter{\expandafter\@xs@reserved@A\expandafter}\expandafter{\@xs@reserved@B}%
	\fi
	\@xs@reserved@A
}
	\let\@xs@reserved@D\@xs@IfStrEqFalse@i
	\let\@xs@reserved@D\@xs@IfStrEqFalse@ii
\def\IfStrEqCase{%
	\@ifstar
		{\def\@xs@reserved@E{\IfStrEq*}\@xs@IfStrCase}%
		{\def\@xs@reserved@E{\IfStrEq}\@xs@IfStrCase}%
}
\def\@xs@IfStrCase#1#2{\@testopt{\@xs@IfStringCase{#1}{#2}}{}}
\def\IfEqCase{%
	\@ifstar
		{\def\@xs@reserved@E{\IfEq*}\@xs@IfEqCase}%
		{\def\@xs@reserved@E{\IfEq}\@xs@IfEqCase}%
}
\def\@xs@IfEqCase#1#2{\@testopt{\@xs@IfStringCase{#1}{#2}}{}}
\def\@xs@IfStringCase#1#2[#3]{%
	\def\@xs@testcase##1##2##3\@xs@nil{
		\@xs@reserved@E{#1}{##1}%
			{##2}
			{\@xs@testempty{##3}%
			 \if@xs@empty
			 	\def\@xs@next{#3}
			 \else
			 	\def\@xs@next{\@xs@testcase##3\@xs@nil}
			 \fi
			 \@xs@next
			 }%
	}%
	\@xs@testcase#2\@xs@nil
}
\@xs@ReturnResult\expandafter{\@xs@reserved@C}{#4}%
\@xs@ReturnResult\expandafter{\@xs@reserved@E}{#4}%
\def\@xs@StrBetween@i[#1,#2]#3#4#5[#6]{%
	\begingroup
		\noexploregroups
		\@xs@cutafteroccur{#3}{#5}{#2}%
		\expandafter\@xs@cutafteroccur\expandafter{\@xs@reserved@C}{#4}{#1}%
		\expandafter
	\endgroup
	\expandafter\@xs@ReturnResult\expandafter{\@xs@reserved@E}{#6}%
	\let\groupID\@empty
}
\def\exploregroups{\let\@xs@exploregroups\relax}
\def\noexploregroups{\def\@xs@exploregroups{\let\@xs@toks0\relax}}
\def\saveexploremode{\let\@xs@saveexploremode\@xs@exploregroups}
\def\restoreexploremode{\let\@xs@exploregroups\@xs@saveexploremode}
\@xs@newmacro\StrSubstitute{0}{3}{1}{%
	\def\@xs@reserved@D{#2}\let\@xs@reserved@C\@empty\def\@xs@reserved@E{#3}%
	\def\@xs@argument@C{#3}\def\@xs@argument@D{#4}%
	\decimalpart\z@\integerpart\z@\def\groupID{0}\let\@xs@nestlevel\z@
	\def\@xs@atendofgroup{%
		\expandafter\endgroup
		\expandafter\@xs@addtomacro\expandafter\@xs@reserved@C\expandafter{\expandafter{\@xs@reserved@C}}%
		\@xs@read@reserved@D
	}%
	\def\@xs@atoccurfound{%
		\ifnum#1<\@ne
			\let\@xs@reserved@A\@xs@argument@D
		\else
			\ifnum\decimalpart>#1
				\let\@xs@reserved@A\@xs@argument@C
			\else
				\let\@xs@reserved@A\@xs@argument@D
			\fi
		\fi
		\expandafter\@xs@addtomacro\expandafter\@xs@reserved@C\expandafter{\@xs@reserved@A}%
		\@xs@read@reserved@D
	}%
	\@xs@testempty{#3}%
	\if@xs@empty
		\expandafter\@xs@ReturnResult\expandafter{\@xs@reserved@D}{#5}%
	\else
		\@xs@read@reserved@D
		\expandafter\@xs@ReturnResult\expandafter{\@xs@reserved@C}{#5}%
	\fi
}
\@xs@newmacro\StrDel{0}{2}{1}{\@xs@StrSubstitute[#1]{#2}{#3}{}[#4]}
\def\@xs@exitallgroups{\ifnum\@xs@nestlevel>\z@\endgroup\expandafter\@xs@exitallgroups\fi}
\@xs@newmacro\StrLen{}{1}{1}{%
	\def\@xs@reserved@C{#1}%
	\decimalpart\z@
	\let\@xs@nestlevel\z@
	\def\groupID{0}%
	\let\@xs@atbegingroup\relax
	\def\@xs@atendofgroup{\endgroup\@xs@read@reserved@C}%
	\let\@xs@atnextsyntaxunit\relax
	\@xs@read@reserved@C
	\expandafter\@xs@ReturnResult\expandafter{\number\decimalpart}{#2}%
}
\def\@xs@continuetonext{%
	\expandafter\@xs@testempty\expandafter{\@xs@reserved@C}%
	\if@xs@empty
		\ifnum\@xs@nestlevel>\z@
			\expandafter\endgroup\expandafter\@xs@addtomacro\expandafter\@xs@reserved@B\expandafter{\expandafter{\@xs@reserved@B}}
			\expandafter\expandafter\expandafter\@xs@continuetonext
		\fi
	\fi
}%
\def\@xs@manage@groupID{%
	\begingroup\def\@xs@reserved@A{0}%
	\ifx\@xs@reserved@A\groupID
		\endgroup\edef\groupID{\number\integerpart}
	\else
		\endgroup\expandafter\@xs@addtomacro\expandafter\groupID\expandafter{\expandafter,\number\integerpart}%
	\fi
}
\def\StrSplit{%
	\@ifstar
		{\let\@xs@reserved@E\@xs@continuetonext\StrSpl@t}%
		{\let\@xs@reserved@E\relax\StrSpl@t}%
}
\@xs@newmacro\StrSpl@t{}{2}{0}{\@xs@StrSplit@i{#2}{#1}\@xs@StrSplit@ii}
\def\@xs@StrSplit@i#1#2{%
	\def\@xs@reserved@D{#1}\def\@xs@reserved@C{#2}\let\@xs@reserved@B\@empty\let\groupID\@empty
	\ifnum#1>\z@
		\decimalpart\z@\integerpart\z@\def\groupID{0}\let\@xs@nestlevel\z@
		\def\@xs@atendofgroup{%
			\expandafter\endgroup
			\expandafter\@xs@addtomacro\expandafter\@xs@reserved@B\expandafter{\expandafter{\@xs@reserved@B}}%
			\@xs@read@reserved@C
		}%
		\def\@xs@atbegingroup{\let\@xs@reserved@B\@empty}%
		\def\@xs@atnextsyntaxunit{%
			\expandafter\@xs@addtomacro\expandafter\@xs@reserved@B\expandafter{\@xs@reserved@A}%
			\ifnum\decimalpart=\@xs@reserved@D\relax
				\ifx\@xs@reserved@C\@empty\@xs@reserved@E\fi
				\global\let\@xs@reserved@B\@xs@reserved@B
				\global\let\@xs@reserved@C\@xs@reserved@C
				\global\let\groupID\groupID
				\@xs@exitallgroups
				\let\@xs@next\relax
			\fi
		}%
		\@xs@read@reserved@C
	\fi
}
\def\@xs@StrSplit@ii#1#2{\let#1\@xs@reserved@B\let#2\@xs@reserved@C}
\let\groupID\@empty
		\let\@xs@reserved@C\@empty
		\let\@xs@reserved@E\@empty
\def\@xs@StrCut@ii#1#2#3{%
	\def\@xs@reserved@D{#1}%
	\let\@xs@reserved@C\@empty
	\def\@xs@reserved@E{#2}%
	\decimalpart\z@\integerpart\z@
	\def\groupID{0}%
	\let\@xs@nestlevel\z@
	\def\@xs@atendofgroup{%
		\expandafter\endgroup
		\expandafter\@xs@addtomacro\expandafter\@xs@reserved@C\expandafter{\expandafter{\@xs@reserved@C}}%
		\@xs@read@reserved@D
	}%
	\def\@xs@atoccurfound{%
		\ifnum\decimalpart=\numexpr(#3)\relax
			\global\let\@xs@reserved@D\@xs@reserved@D
			\global\let\@xs@reserved@C\@xs@reserved@C
			\global\let\groupID\groupID
			\@xs@exitallgroups
			\let\@xs@next\relax
		\else
				\expandafter\@xs@addtomacro\expandafter\@xs@reserved@C\expandafter{\@xs@reserved@E}%
			\let\@xs@next\@xs@read@reserved@D
		\fi
	}%
	\@xs@read@reserved@D
	\def\@xs@argument@A{#2}%
	\let\@xs@reserved@E\@xs@reserved@D
		\expandafter\expandafter\expandafter
	\def
		\expandafter\expandafter\expandafter
	\@xs@reserved@D
		\expandafter\expandafter\expandafter
	{\expandafter\@xs@reserved@C\@xs@argument@A}%
}
\def\@xs@StrCut@iii#1#2{\let#1\@xs@reserved@C\let#2\@xs@reserved@E}
\@xs@newmacro\StrMid{}{3}{1}{%
	\begingroup
		\noexploregroups
		\let\@xs@reserved@E\relax
		\@xs@StrSplit@i{#3}{#1}%
		\edef\@xs@reserved@C{\number\numexpr#2-1}%
		\let\@xs@reserved@E\relax
		\expandafter\expandafter\expandafter\@xs@StrSplit@i\expandafter\expandafter\expandafter{\expandafter\@xs@reserved@C\expandafter}\expandafter{\@xs@reserved@B}%
	\expandafter\endgroup
	\expandafter\@xs@ReturnResult\expandafter{\@xs@reserved@C}{#4}%
	\let\groupID\@empty
}
\@xs@newmacro\StrGobbleLeft{}{2}{1}{%
	\let\@xs@reserved@E\relax
	\@xs@StrSplit@i{#2}{#1}%
	\expandafter\@xs@ReturnResult\expandafter{\@xs@reserved@C}{#3}%
}
\@xs@newmacro\StrLeft{}{2}{1}{%
	\let\@xs@reserved@E\relax
	\@xs@StrSplit@i{#2}{#1}%
	\expandafter\@xs@ReturnResult\expandafter{\@xs@reserved@B}{#3}%
}
\@xs@newmacro\StrGobbleRight{}{2}{1}{%
	\@xs@StrLen{#1}[\@xs@reserved@D]%
	\let\@xs@reserved@E\relax
	\expandafter\@xs@StrSplit@i\expandafter{\number\numexpr\@xs@reserved@D-#2}{#1}%
	\expandafter\@xs@ReturnResult\expandafter{\@xs@reserved@B}{#3}%
}
\@xs@newmacro\StrRight{}{2}{1}{%
	\@xs@StrLen{#1}[\@xs@reserved@D]%
	\let\@xs@reserved@E\relax
	\expandafter\@xs@StrSplit@i\expandafter{\number\numexpr\@xs@reserved@D-#2}{#1}%
	\expandafter\@xs@ReturnResult\expandafter{\@xs@reserved@C}{#3}%
}
\@xs@newmacro\StrChar{}{2}{1}{%
	\let\@xs@reserved@B\@empty
	\def\@xs@reserved@C{#1}\def\@xs@reserved@D{#2}%
	\ifnum#2>\z@
		\def\groupID{0}\let\@xs@nestlevel\z@\integerpart\z@\decimalpart\z@
		\let\@xs@atbegingroup\relax
		\def\@xs@atendofgroup{\endgroup\@xs@read@reserved@C}%
		\def\@xs@atnextsyntaxunit{%
			\ifnum\decimalpart=\@xs@reserved@D
				\global\let\@xs@reserved@B\@xs@reserved@A
				\global\let\groupID\groupID
				\@xs@exitallgroups
				\let\@xs@next\relax
			\fi
		}%
		\@xs@read@reserved@C
	\fi
	\expandafter\@xs@testempty\expandafter{\@xs@reserved@B}%
	\if@xs@empty\let\groupID\@empty\fi
	\expandafter\@xs@ReturnResult\expandafter{\@xs@reserved@B}{#3}%
}
\@xs@newmacro\StrCount{}{2}{1}{%
	\@xs@testempty{#2}%
	\def\@xs@reserved@D{#1}\def\@xs@reserved@E{#2}\let\@xs@reserved@C\@empty
	\if@xs@empty
		\@xs@ReturnResult{0}{#3}%
	\else
		\decimalpart\z@\integerpart\z@\def\groupID{0}\let\@xs@nestlevel\z@
		\def\@xs@atendofgroup{%
			\expandafter\endgroup
			\expandafter\@xs@addtomacro\expandafter\@xs@reserved@C\expandafter{\expandafter{\@xs@reserved@C}}%
			\@xs@read@reserved@D
		}%
		\def\@xs@atoccurfound{\let\@xs@reserved@C\@empty\@xs@read@reserved@D}%
		\@xs@read@reserved@D
		\expandafter\@xs@ReturnResult\expandafter{\number\decimalpart}{#3}%
	\fi
}
\@xs@newmacro\StrPosition{1}{2}{1}{%
	\@xs@cutafteroccur{#2}{#3}{#1}%
	\let\@xs@reserved@E\groupID
	\ifx\@xs@reserved@C\@xs@reserved@D
		\@xs@ReturnResult{0}{#4}%
		\let\@xs@reserved@E\@empty
	\else
		\expandafter\@xs@StrLen\expandafter{\@xs@reserved@C}[\@xs@reserved@C]%
		\expandafter\@xs@ReturnResult\expandafter{\number\numexpr\@xs@reserved@C+1}{#4}%
	\fi
	\let\groupID\@xs@reserved@E
}
\def\comparestrict{\let\@xs@comparecoeff\@ne}
\def\comparenormal{\let\@xs@comparecoeff\z@}
\def\savecomparemode{\let\@xs@saved@comparecoeff\@xs@comparecoeff}
\def\restorecomparemode{\let\@xs@comparecoeff\@xs@saved@comparecoeff}
	\def\@xs@reserved@A{#1}%
	\def\@xs@reserved@B{#2}%
		\def\@xs@next{\@xs@StrCompare@i{#1}{#2}{#3}}%
\def\@xs@StrCompare@i#1#2#3{%
	\def\@xs@StrCompare@iii##1{%
		\let\@xs@reserved@A\@empty
		\expandafter\@xs@testempty\expandafter{\@xs@reserved@C}%
		\if@xs@empty
			\def\@xs@reserved@A{*\@xs@comparecoeff}%
		\else
			\expandafter\@xs@testempty\expandafter{\@xs@reserved@D}%
			\if@xs@empty
				\def\@xs@reserved@A{*\@xs@comparecoeff}%
			\fi
		\fi
		\def\@xs@next{%
			\expandafter\@xs@ReturnResult\expandafter
			{\number\numexpr##1\@xs@reserved@A\relax}{#3}%
		}%
	}%
	\def\@xs@StrCompare@ii##1{
		\expandafter\@xs@returnfirstsyntaxunit\expandafter{\@xs@reserved@C}\@xs@reserved@A
		\expandafter\@xs@returnfirstsyntaxunit\expandafter{\@xs@reserved@D}\@xs@reserved@B
		\ifx\@xs@reserved@B\@xs@reserved@A
			\expandafter\@xs@testempty\expandafter{\@xs@reserved@A}%
			\if@xs@empty
				\@xs@StrCompare@iii{##1}
			\else
				\def\@xs@next{\expandafter\@xs@StrCompare@ii\expandafter{\number\numexpr##1+1}}
				\expandafter\@xs@removefirstsyntaxunit\expandafter{\@xs@reserved@C}\@xs@reserved@C
				\expandafter\@xs@removefirstsyntaxunit\expandafter{\@xs@reserved@D}\@xs@reserved@D
			\fi
		\else
			\@xs@StrCompare@iii{##1}%
		\fi
		\@xs@next
	}%
	\def\@xs@reserved@C{#1}\def\@xs@reserved@D{#2}%
	\@xs@StrCompare@ii1%
}
\@xs@newmacro\StrFindGroup{}{2}{1}{%
	\def\@xs@reserved@A{#2}\def\@xs@reserved@B{0}%
	\ifx\@xs@reserved@A\@xs@reserved@B
		\def\@xs@next{\@xs@ReturnResult{{#1}}{#3}}%
	\else
		\def\@xs@next{\@xs@StrFindGroup@i{#1}{#2}[#3]}%
	\fi
	\@xs@next
}
\def\@xs@StrFindGroup@i#1#2[#3]{%
	\def\@xs@StrFindGroup@ii{%
		\expandafter\@xs@testempty\expandafter{\@xs@reserved@C}%
		\if@xs@empty
			\def\@xs@next{\@xs@ReturnResult{}{#3}}
		\else
			\expandafter\@xs@returnfirstsyntaxunit\expandafter{\@xs@reserved@C}\@xs@reserved@D
			\ifx\bgroup\@xs@toks
				\advance\decimalpart\@ne
				\ifnum\decimalpart=\@xs@reserved@A
					\ifx\@empty\@xs@reserved@B
						\def\@xs@next{\expandafter\@xs@ReturnResult\expandafter{\@xs@reserved@D}{#3}}
					\else
						\expandafter\def\expandafter\@xs@next\expandafter{\expandafter\@xs@StrFindGroup@i\@xs@reserved@D}
						\expandafter\@xs@addtomacro\expandafter\@xs@next\expandafter{\expandafter{\@xs@reserved@B}[#3]}
					\fi
				\else
					\expandafter\@xs@removefirstsyntaxunit\expandafter{\@xs@reserved@C}\@xs@reserved@C
					\let\@xs@next\@xs@StrFindGroup@ii
				\fi
			\else
				\expandafter\@xs@removefirstsyntaxunit\expandafter{\@xs@reserved@C}\@xs@reserved@C
				\let\@xs@next\@xs@StrFindGroup@ii
			\fi
		\fi
		\@xs@next
	}%
	\@xs@extractgroupnumber{#2}\@xs@reserved@A\@xs@reserved@B
	\decimalpart\z@
	\ifnum\@xs@reserved@A>\z@\def\@xs@reserved@C{#1}\else\let\@xs@reserved@C\@empty\fi
	\@xs@StrFindGroup@ii
}
\def\@xs@extractgroupnumber#1#2#3{%
	\def\@xs@extractgroupnumber@i##1,##2\@xs@nil{\def#2{##1}\def#3{##2}}%
	\@xs@extractgroupnumber@i#1,\@xs@nil
	\ifx\@empty#3\else\@xs@extractgroupnumber@i#1\@xs@nil\fi
}
\def\expandingroups{\let\@xs@expandingroups\exploregroups}
\def\noexpandingroups{\let\@xs@expandingroups\noexploregroups}
\def\StrExpand{\@testopt{\@xs@StrExpand}{1}}
\def\@xs@StrExpand[#1]#2#3{%
	\begingroup
		\@xs@expandingroups
		\ifnum#1>\z@
			\integerpart#1\relax
			\decimalpart\z@\def\groupID{0}\let\@xs@nestlevel\z@
			\def\@xs@atendofgroup{%
				\expandafter\endgroup
				\expandafter\@xs@addtomacro\expandafter\@xs@reserved@B\expandafter{\expandafter{\@xs@reserved@B}}%
				\@xs@read@reserved@C
			}%
			\def\@xs@atbegingroup{\let\@xs@reserved@B\@empty}%
			\def\@xs@atnextsyntaxunit{%
				\expandafter\expandafter\expandafter\@xs@addtomacro
				\expandafter\expandafter\expandafter\@xs@reserved@B
				\expandafter\expandafter\expandafter{\@xs@reserved@A}%
			}%
			\def\@xs@reserved@C{#2}%
			\@xs@StrExpand@i{#1}
		\else
			\def\@xs@reserved@B{#2}%
		\fi
		\global\let\@xs@reserved@B\@xs@reserved@B
	\endgroup
	\let#3\@xs@reserved@B
	\let\groupID\@empty
}
\def\@xs@StrExpand@i#1{%
	\ifnum#1>\z@
		\let\@xs@reserved@B\@empty
		\@xs@read@reserved@C
		\let\@xs@reserved@C\@xs@reserved@B
		\def\@xs@reserved@A{\expandafter\@xs@StrExpand@i\expandafter{\number\numexpr#1-1}}%
	\else
		\let\@xs@reserved@A\relax
	\fi
	\@xs@reserved@A
}
\def\scancs{\@testopt{\@xs@scancs}{1}}
\def\@xs@scancs[#1]#2#3{%
	\@xs@StrExpand[#1]{#3}{#2}%
	\edef#2{\detokenize\expandafter{#2}}%
}
\newcommand\myscale{0.8}
\newcommand\tcirc[3]{
	\ifthenelse{\equal{#1}{w}}{\filldraw[fill=white,draw=black] (#2) circle (0.075);}{}%
	\ifthenelse{\equal{#1}{b}}{\filldraw[black] (#2) circle (0.075);}{}%
	\draw (#2) ++(0,0.35) node {$#3$};
	}
\newcommand\bond[1]{\draw (#1) -- +(1,0)}
\newcommand\vbond[1]{\draw (#1) -- +(0,-1)}
\newcommand\diagbond[2]{
	\ifthenelse{\equal{#1}{u}}{
		\draw (#2) -- +(0.5,0.865);
	}{}
	\ifthenelse{\equal{#1}{d}}{
		\draw (#2) -- +(0.5,-0.865);
	}{}
	}
\newcommand\dbond[2]{
	\draw (#2) ++(0.03,0.03) -- +(0.94,0);
	\draw (#2) ++(0.03,-0.03) -- +(0.94,0);
	\ifthenelse{\equal{#1}{r}}{
		\draw[semithick] (#2) ++(0.6,0) ++(-0.15,0.2) -- ++(0.15,-0.2) -- +(-0.15,-0.2);
	}{}
	\ifthenelse{\equal{#1}{l}}{
		\draw[semithick] (#2) ++(0.45,0) ++(0.15,0.2) -- ++(-0.15,-0.2) -- +(0.15,-0.2);
	}{}
	}
\newcommand\tbond[2]{
	\draw (#2)  -- +(1,0);
	\draw (#2) ++(0.05,0.06) -- +(0.9,0);
	\draw (#2) ++(0.05,-0.06) -- +(0.9,0);
	\ifthenelse{\equal{#1}{r}}{
		\draw[semithick] (#2) ++(0.6,0) ++(-0.15,0.2) -- ++(0.15,-0.2) -- +(-0.15,-0.2);
	}{}
	\ifthenelse{\equal{#1}{l}}{
		\draw[semithick] (#2) ++(0.45,0) ++(0.15,0.2) -- ++(-0.15,-0.2) -- +(0.15,-0.2);
	}{}
	}
\newcommand\tcross[2]{
	\draw (#1) ++(0,0.35) node {$#2$};
	\draw[semithick] (#1) ++(-0.15,-0.15)-- +(0.3,0.3);
	\draw[semithick] (#1) ++(-0.15,0.15)-- +(0.3,-0.3);
	}
\newcommand\tsquare[2]{
		\draw[semithick,color=blue] (#1) ++(-0.15,-0.15) rectangle ++(0.3,0.3);
		\tcross{#1}{#2};
		}
\newcommand\tstar[2]{
	\draw[color=red] (#1) node {\Large$*$};
	\draw (#1) ++(0,0.35) node {$#2$};
	}
\newcommand\DDnode[3]{
\ifthenelse{\equal{#1}{w}}{\tcirc{w}{#2}{#3}}{}		
\ifthenelse{\equal{#1}{b}}{\tcirc{b}{#2}{#3}}{}		
\ifthenelse{\equal{#1}{x}}{\tcross{#2}{#3}}{}		
\ifthenelse{\equal{#1}{s}}{\tstar{#2}{#3}}{}		
\ifthenelse{\equal{#1}{q}}{\tsquare{#2}{#3}}{}		
}
 \newcommand\Athree[2]{
 \begin{tiny}
 \begin{tikzpicture}[scale=\myscale,baseline=-3pt]

 \bond{0,0};		
 \bond{1,0};		

 \StrBefore{#2}{,}[\labelone]
 \StrBetween[1,2]{#2}{,}{,}[\labeltwo]
 \StrBehind[2]{#2}{,}[\labelthree]

 \StrChar{#1}{1}[\nodetype];
 \DDnode{\nodetype}{0,0}{\labelone};
 \StrChar{#1}{2}[\nodetype];
 \DDnode{\nodetype}{1,0}{\labeltwo};
 \StrChar{#1}{3}[\nodetype];
 \DDnode{\nodetype}{2,0}{\labelthree};
 \useasboundingbox (-.4,-.2) rectangle (2.4,0.55); 
 \end{tikzpicture}
 \end{tiny}
 }
\newcommand\Edd[2]{
 \begin{tiny}
 \begin{tikzpicture}[scale=\myscale,baseline=-3pt]
 \foreach \x in {0,1,2,3} {
	\bond{\x,0};
 }
 \vbond{2,0};
 
 \StrLen{#1}[\Ernk]
 
 \StrChar{#1}{1}[\nodetype];
 \DDnode{\nodetype}{0,0}{\StrBefore{#2}{,}};
 \StrChar{#1}{2}[\nodetype];
 \DDnode{\nodetype}{2,-1}{\StrBetween[1,2]{#2}{,}{,}};
 \StrChar{#1}{3}[\nodetype];
 \DDnode{\nodetype}{1,0}{\StrBetween[2,3]{#2}{,}{,}};
 \StrChar{#1}{4}[\nodetype];
 \DDnode{\nodetype}{2,0}{\StrBetween[3,4]{#2}{,}{,}};
 \StrChar{#1}{5}[\nodetype];
 \DDnode{\nodetype}{3,0}{\StrBetween[4,5]{#2}{,}{,}};
 \StrChar{#1}{6}[\nodetype];

 \ifthenelse{\equal{\Ernk}{6}}{
 		\DDnode{\nodetype}{4,0}{\StrBehind[5]{#2}{,}};
 		\useasboundingbox (-.4,-1.2) rectangle (4.4,0.55);
	}{}%
 
 \ifthenelse{\equal{\Ernk}{7}}{
 		\bond{4,0};
 		\DDnode{\nodetype}{4,0}{\StrBetween[5,6]{#2}{,}{,}};
		\StrChar{#1}{7}[\nodetype];
		\DDnode{\nodetype}{5,0}{\StrBehind[6]{#2}{,}};
 		\useasboundingbox (-.4,-1.2) rectangle (5.4,0.55);
	}{}%

 \ifthenelse{\equal{\Ernk}{8}}{
 		\bond{4,0};
 		\bond{5,0};
 		\DDnode{\nodetype}{4,0}{\StrBetween[5,6]{#2}{,}{,}};
		\StrChar{#1}{7}[\nodetype];
		\DDnode{\nodetype}{5,0}{\StrBetween[6,7]{#2}{,}{,}};
		\StrChar{#1}{8}[\nodetype];
		\DDnode{\nodetype}{6,0}{\StrBehind[7]{#2}{,}};
		\useasboundingbox (-.4,-1.2) rectangle (6.4,0.55);
	}{}%

 \end{tikzpicture}
 \end{tiny}
 }
 \newcommand\tr{\operatorname{tr}}
 \renewcommand\l{\left}
 \renewcommand\r{\right}
 \newcommand\qRa{\quad\Rightarrow\quad}
 \newcommand\op{\oplus}
 \newcommand\ot{\otimes}
 \newcommand\fann{\mathfrak{ann}}
 \newcommand\rnk{\mathrm{rank}}
 \newcommand\im{\operatorname{im}}
 \newcommand\fa{{\mathfrak a}}
 \newcommand\fg{{\mathfrak g}}
 \newcommand\fgl{\mathfrak{gl}}
 \newcommand\fh{{\mathfrak h}}
 \newcommand\fk{{\mathfrak k}}
 \newcommand\fn{{\mathfrak n}}
 \newcommand\fp{{\mathfrak p}}
 \newcommand\fs{{\mathfrak s}}
 \newcommand\fsl{\mathfrak{sl}}
 \newcommand\fso{\mathfrak{so}}
 \newcommand\fz{{\mathfrak z}}
 \newcommand\cD{{\mathcal D}}
 \newcommand\cE{{\mathcal E}}
 \newcommand\cF{{\mathcal F}}
 \newcommand\cG{{\mathcal G}}
 \newcommand\cO{{\mathcal O}}
 \newcommand\cP{{\mathcal P}}
 \newcommand\cS{{\mathcal S}}
 \newcommand\cZ{{\mathcal Z}}
 \newcommand\sfr{\mathsf{r}}
 \newcommand\sfs{\mathsf{s}}
 \newcommand\sfw{\mathsf{w}}
 \newcommand\sfH{\mathsf{H}}
 \newcommand\sfX{\mathsf{X}}
 \newcommand\sfY{\mathsf{Y}}
 \newcommand\bbC{{\mathbb C}}
 \newcommand\bbK{{\mathbb K}}
 \newcommand\bbP{{\mathbb P}}
 \newcommand\bbR{{\mathbb R}}
 \newcommand\bbT{{\mathbb T}}
 \newcommand\bbV{{\mathbb V}}
 \newcommand\bbW{{\mathbb W}}
 \newcommand\bbZ{{\mathbb Z}}
 \newcommand\tspan{\mathrm{span}}
 \newcommand\Ben{\begin{enumerate}}
 \newcommand\Een{\end{enumerate}}
 \newcommand\Bex{\begin{example}}
 \newcommand\Eex{\end{example}}
 \newcommand\GL{\operatorname{GL}}
 \newcommand\SL{\mathrm{SL}}
\newcommand\Flag{\mathrm{Flag}}
 \newcommand\Ad{{\rm Ad}}
 \def\assoc/{associative}
 \def\arb/{arbitrary}
 \def\btw/{between}
 \def\coeff/{coefficient}
 \def\cohom/{cohomology}
 \def\coord/{coordinate}
 \def\coordsys/{coordinate system}
 \def\cpt/{compact}
 \def\cred/{completely reducible}
 \def\cts/{continuous}
 \def\dga/{differential-graded algebra}
 \def\dR/{de Rham}
 \def\Euc/{Euclidean} 
 \def\grp/{group}
 \def\hom/{homomorphism}
 \def\inv/{invariant}
 \def\iso/{isomorphism}
 \def\La/{Lie algebra}
 \def\Lag/{Lagrangian Grassmannian}
 \def\LG/{Lie group}
 \def\MA/{Monge--Amp\`ere}
 \def\MC/{Maurer--Cartan}
 \def\lintr/{linear transformation} 
 \def\mfld/{manifold}
 \def\nb/{normal bundle}
 \def\nbd/{neighbourhood}
 \def\nondeg/{non-degenerate}
 \def\posdef/{positive definite}
 \def\pu/{partition of unity}
 \def\rep/{representation}
 \def\Riem/{Riemannian}
 \def\sg/{subgroup}
 \def\ss/{semi-simple}
 \def\inv/{invariant}
 \def\irr/{irreducible}
 \def\Jacid/{Jacobi identity}
 \def\li/{linearly independent}
 \def\nd/{nowhere dependent}
 \def\nz/{nowhere zero}
 \def\on/{orthonormal}
 \def\onb/{\on/ basis}
 \def\orc/{\orth/ complement}
 \def\orth/{orthogonal}
 \def\orp/{\orth/ projection}
 \def\pde/{partial differential equation}
 \def\resp/{respectively}
 \def\seq/{sequence}
 \def\std/{standard}
 \def\SW/{Stiefel-Whitney}
 \def\uc/{universal cover}
 \def\vb/{vector bundle}
 \def\vf/{vector field}
 \def\vs/{vector space}
 \def\wrt/{with respect to}
 \renewcommand\mod{\,{\rm mod}\ }
 \renewcommand\dim{{\rm dim}}
\newtheorem{theorem}{Theorem}[section]
\newtheorem{lemma}[theorem]{Lemma}
\newtheorem{prop}[theorem]{Proposition}
\theoremstyle{definition}
\newtheorem{example}[theorem]{Example}
\theoremstyle{remark}
\newtheorem{remark}[theorem]{Remark}
\numberwithin{equation}{section}
\numberwithin{table}{section}
 \newcommand\diag{\operatorname{diag}}
 \newcommand\bmat[1]{\begin{bmatrix} #1 \end{bmatrix}}
 \newcommand\finf{\mathfrak{inf}}
 \newcommand\Axox[1]{\Athree{xwx}{#1}}
 \newcommand{\p}{\partial}
 \newcommand\PSL{\operatorname{PSL}}
 \renewcommand\myscale{0.8}
\title[Homogeneous ILC structures in dimension five]{Homogeneous integrable Legendrian contact structures in dimension five}
\author{Boris Doubrov}
\address{Faculty of Mathematics and Mechanics, Belarusian State 
University, Nezavisimosti ave. 4, 220050, Minsk, Belarus}
\email{doubrov@bsu.by}
\author{Alexandr Medvedev}
\address{School of Science and Technology, University of New 
England, Armidale NSW 2351, Australia}
\curraddr{International School for Advanced Studies, via Bonomea 
265, Trieste	34136, Italy}
\email{amedvedev@sissa.it}
\author{Dennis The}
\address{Mathematical Sciences Institute, Australian National 
University, ACT 0200, Australia}
\curraddr{Fakult\"at f\"ur Mathematik, Universit\"at Wien, Oskar-Morgenstern Platz 1, 1090 Wien, Austria}
\email{dennis.the@univie.ac.at}
\subjclass[2010]{Primary: 58J70; Secondary: 35A30, 53A40, 53B15, 53D10, 22E46.}
\keywords{Legendrian structures, symmetry algebra, curvature module, multiply transitive, complete systems of PDEs}
\begin{document}
\begin{abstract}
We consider Legendrian contact structures on odd-dimensional complex analytic manifolds.  We are particularly interested in integrable structures, which can be encoded by compatible complete systems of second order PDEs on a scalar function of many independent variables and considered up to point transformations.  Using the techniques of parabolic differential geometry, we compute the associated regular, normal Cartan connection and give explicit formulas for the harmonic part of the curvature.  The PDE system is trivializable by means of point transformations if and only if the harmonic curvature vanishes identically.

In dimension five, the harmonic curvature takes the form of a binary quartic field, so there is a Petrov classification based on its root type.  We give a complete local classification of all five-dimensional integrable Legendrian contact structures whose symmetry algebra is transitive on the manifold and has at least one-dimensional isotropy algebra at any point.
\end{abstract}

\maketitle

 \section{Introduction}
 
A \emph{Legendrian contact structure} $(M;E,F)$ is defined to be a splitting of a contact distribution $C$ (on an odd-dimensional manifold $M$) into the direct sum of two subdistributions $E,F$ that are maximally isotropic with respect to the naturally defined conformal symplectic structure on $C$.  Such structures can be treated in both the real smooth and complex analytic categories. In the current paper, we assume that all our manifolds and related objects are complex analytic, although many results are also valid in the smooth category.
  
We shall exclusively deal with \emph{integrable} Legendrian contact structures (or just ILC structures), which means that both isotropic subdistributions are completely integrable. The main sources of ILC structures are compatible complete systems of 2nd order PDEs on one unknown function of several variables (considered up to {\em point} transformations), i.e.
\[
 \frac{\partial^2 u}{\partial x^i \partial x^j} = f_{ij}(x,u,\partial u), \qquad 1 \leq i,j \leq n,
\]
 and the complexifications of (Levi-nondegenerate) CR structures of codimension 1. 

The smallest dimension of a manifold with a Legendrian contact structure is 3. In this dimension both isotropic subdistributions are 1-dimensional and are automatically completely integrable. The corresponding ILC structures can be encoded by a single 2nd order ODE and have been well-studied starting from the pioneering work of Tresse~\cite{Tresse1896} (see also~\cite{Bol1932,Cartan1924,Olver1995}). Their real counterpart, CR structures on 3-dimensional real hypersurfaces in $\mathbb{C}^2$, have also been well-studied starting from the classical works of \'Elie Cartan~\cite{Cartan1932a,Cartan1932b}.

Legendrian contact structures belong to the class of so-called parabolic geometries. In particular, they enjoy a number of important properties derived from the general theory of parabolic geometries~\cite{CS2009}: the existence of a natural Cartan connection, description of the principal invariants in terms of the representation theory of simple Lie algebras, finite-dimensional symmetry algebras, and the classification of submaximal symmetry dimensions~\cite{KT2013}. Legendrian contact structures are modeled by the flag variety $\operatorname{Flag}_{1,n+1}(\mathbb{C}^{n+2})$ of pairs of incident lines and hyperplanes in $\mathbb{C}^{n+1}$ equipped with a natural action of $\operatorname{PGL}(n+2,\mathbb{C})$.

 We note that in \cite{Tak1994}, Takeuchi studied the special class of Legendrian contact structures that are induced on the projective cotangent bundle $M = \cP(T^*N)$ from a projective structure $(N,[\nabla])$.  With the sole exception of the flat model, this induced structure on $M$ is never an ILC structure.  Thus, his study is transverse to our study here.
 
In the current paper we are mainly interested in the classification of \emph{multiply transitive} ILC structures in dimension~5. The term ``multiply transitive'' means that the symmetry algebra of the ILC structure should be transitive on the manifold and should have a non-trivial isotropy subalgebra (i.e.\ at least one-dimensional) at each point.  As our study here is local in nature, we may as well require these conditions in an open subset of the manifold.  

In dimension~3, all multiply transitive ILC structures are flat. This reflects a well-known fact that any 2nd order ODE is either equivalent to the trivial equation $u''(x)=0$ and has 8-dimensional symmetry algebra, or its symmetry algebra is at most 3-dimensional. In dimension 5 this is no longer the case, as, for example, the submaximally symmetric ILC structures have symmetry algebras of dimension~8 and are multiply transitive~\cite{KT2013}.  In fact, all ILC structures with 8 symmetries are locally equivalent. This leaves us with the classification of ILC structures with 6- and 7-dimensional symmetry. 
A similar classification of integrable CR-manifolds in dimension 5 with transitive symmetry algebras of dimension~7 was done by A.V.~Loboda~\cite{Loboda2001a,Loboda2001b}.

As in the case of the geometry of scalar 2nd order ODEs, complete systems of 2nd order PDEs also admit a notion of duality that swaps the set of dependent and independent variables with the space of constants of integration parametrizing the generic solution. This corresponds to swapping the two isotropic distributions defining the ILC structure. We classify ILC structures up to this duality and indicate which structures are self-dual, i.e. locally contact equivalent to their dual.

In his famous 1910 paper~\cite{Cartan1910}, \'Elie Cartan studied the geometry of rank two distributions on 5-manifolds having generic growth vector $(2,3,5)$.  For such structures, Cartan solved the local equivalence problem and obtained a classification of all multiply transitive models.\footnote{One inadvertent omission from Cartan's list was recently discovered in~\cite{DouGov2013}.}  While the equivalence problem was solved by means of {\em Cartan's equivalence method}~\cite{Gardner1989}, we bypass this step in our study of ILC structures by using the full power of parabolic geometry.  Indeed, representation theory is used to quickly construct the full curvature module and set up the structure equations for the (regular, normal) Cartan geometry.  Our classification of multiply transitive ILC structures implements Cartan's technique, which we refer to as \emph{Cartan's reduction method}.

There is another striking similarity between ILC structures in dimension 5 and $(2,3,5)$ distributions. In both cases the fundamental invariant is represented by a single binary quartic. Similar to the Petrov classification for the Weyl curvature tensor in Lorentzian (conformal) geometry, we classify ILC structures in dimension 5 by the number and multiplicity of roots of this quartic. We also prove that non-flat multiply transitive structures may only have type N (a single root of multiplicity $4$), type D (two roots of multiplicity $2$), or type III (one simple root and one root of multiplicity 3). This is quite similar to Cartan's result \cite{Cartan1910} that all multiply transitive $(2,3,5)$-distributions have either type N or type D.  We identify the maximal symmetry dimension for each Petrov type in Theorem \ref{T:Petrov-sym}.

The main result of our paper can be summarized as follows:
\begin{theorem}
Any multiply transitive ILC structure in dimension 5 is locally equivalent to the ILC structure defined by one of PDE models in Table \ref{F:ILC-classify} or its dual.
 \begin{table}[ht]
 \caption{Classification of all multiply transitive ILC structures in dimension 5}
 \label{F:ILC-classify}
 \[
 \begin{array}{|l|c|c|c|c|l|}\hline
 \mbox{{\rm Model}}& \mbox{{\rm SD}} & u_{11} & u_{12} & u_{22} & \mbox{{\rm Remarks}}\\ \hline\hline
 \mbox{{\rm O.15}} & \checkmark & 0 & 0 & 0 & \mbox{{\rm flat model}}\\ \hline
 \mbox{{\rm N.8}} & \checkmark & q^2 & 0 & 0 & \mbox{{\rm unique submaximal}}\\
 \mbox{{\rm N.7-1}} & \xmark & q^2\cG_\kappa(x) & 0 & 0 & \kappa \in \bbC_\infty \backslash \{ 0, -3 \}; \,\,\kappa\sim -3-\kappa\\
 \mbox{{\rm N.7-2}} & \checkmark & \frac{1}{q} & 1 & 0 & \\
 \mbox{{\rm N.6-1}} & \checkmark & \cF_\mu(q) & 1 & 0 & \mu \in \bbC \backslash \{ -1, 2 \}\\
 \mbox{{\rm N.6-2}} & * & \cF_\mu(q)\cG_\kappa(x) & 0 & 0 & \mu\in\bbC_\infty\backslash\{-1,2\}, \kappa\in\bbC_\infty\backslash\{0,-3\}; \\
&&&&& \mu\sim 1-\mu,\,\,\kappa\sim-3-\kappa\\ \hline
 \mbox{{\rm D.7}} & \checkmark & p^2 & 0 & \lambda q^2 & \lambda \in \bbC \backslash\{-1\};\,\, \lambda\sim \frac{1}{\lambda} \mbox{ {\rm for} } \lambda \neq 0\\ 
 \mbox{{\rm D.6-1}} & \checkmark & p^2-\frac{q^4}{4} & q(p-\frac{q^2}{2}) & p- \frac{q^2}{2} & \\
 \mbox{{\rm D.6-2}} & \checkmark & \cG_\mu(p) & 0 & 0 & \mu \in \bbC_\infty \backslash \{ 0,1,2 \}\\
 \mbox{{\rm D.6-3}} & \checkmark & \lambda p^2R & 1+\lambda(pq-2u)R & \lambda q^2R & R=\frac{\sqrt{u-pq}}{u^{3/2}},\, \lambda \in \bbC \backslash \{ 0, \pm \frac{1}{2} \}; \lambda\sim-\lambda\\[1mm]
 \mbox{{\rm D.6-3}}_\infty & \checkmark & p^2\sqrt{1-2pq} & (pq-1)\sqrt{1-2pq} & q^2\sqrt{1-2pq} & \\[1mm]
 \mbox{{\rm D.6-4}} & \xmark & 0 & \frac{1+pq}{u} & 0 & \\[1mm] \hline
 \mbox{{\rm III.6-1}} & \xmark & \frac{p}{x-q} & 0 & 0 & \\
 \mbox{{\rm III.6-2}} & \xmark & 2q(2p-qu) & q^2 & 0 & \\ \hline
 \end{array}
 \]
 \end{table}
\end{theorem}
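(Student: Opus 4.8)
The plan is to proceed by \emph{Petrov type} and, within each type, to run \emph{Cartan's reduction method} on the regular normal Cartan connection $(\mathcal{G}\to M,\omega)$ constructed above, whose harmonic curvature is, as recorded earlier, a binary quartic field $\mathsf{Q}$ on $M$; the structure is flat --- hence locally equivalent to the model $\mathrm{O.15}$ --- precisely when $\mathsf{Q}\equiv 0$. Here $\mathfrak{g}=\mathfrak{sl}(4,\mathbb{C})$ carries the contact grading with $\mathfrak{g}_{-1}=\mathfrak{g}_{-1}^E\oplus\mathfrak{g}_{-1}^F$ and $\dim\mathfrak{g}_-=5$, the reductive part $\mathfrak{g}_0$ has semisimple part an $\mathfrak{sl}(2,\mathbb{C})$ acting on binary quartics in the standard way, and the ILC duality exchanging $E$ and $F$ covers the diagram involution of $A_3$. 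Assume $\mathsf{Q}\not\equiv 0$ and let $\mathfrak{s}$ be a symmetry algebra transitive on (an open subset of) $M$ with isotropy $\mathfrak{a}$ everywhere of dimension $\geq 1$, so $\dim\mathfrak{s}\geq\dim M+1=6$.

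\textbf{Step 1: the Petrov type is $N$, $D$ or $III$.} Since $\mathfrak{s}$ is transitive, the root type of $\mathsf{Q}$ is the same at every point. By Theorem~\ref{T:Petrov-sym} --- whose proof is a Tanaka-prolongation estimate independent of the present classification --- the maximal symmetry dimension of a non-flat ILC structure in dimension five is $8$, $7$, $6$ for Petrov types $N$, $D$, $III$, and is at most $5$ for root types $[2,1,1]$ and $[1,1,1,1]$; since $\dim\mathfrak{s}\geq 6$, the last two are impossible, so $\mathsf{Q}$ has root type $[4]$, $[2,2]$ or $[3,1]$ and the structure has Petrov type $N$, $D$ or $III$. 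The underlying mechanism is that an isotropy element maps under $\omega$ into $\mathfrak{p}$ and its $\mathfrak{g}_0$-component must fix the value of $\mathsf{Q}$ in the curvature module, which (together with the prolongation bound) confines the root type to these three strata.

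\textbf{Step 2: reduction, integration, and tabulation.} Fix one of the three surviving types. Using the $G_0$-action, normalise $\mathsf{Q}$ to the model quartic ($s^4$, $s^2t^2$, $s^3t$); this passes to the subbundle $\mathcal{G}'\subset\mathcal{G}$ on which the normal form holds, with structure group the stabilizer of the model quartic, and $\omega$ restricts to an adapted coframe there. Expanding $d\omega$ on $\mathcal{G}'$ brings in further curvature functions, all forced to be constant by homogeneity; iterating Cartan's reduction --- normalising successive essential invariants and shrinking the structure group --- one reaches an $\{e\}$-structure, and the Bianchi identities $d^2=0$ at each stage become polynomial relations among the constants and the residual continuous parameters (which persist as the moduli $\lambda,\mu,\kappa$). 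Solving these splits into finitely many branches; on each, the structure equations are explicit with constant coefficients, hence define a Lie algebra $\mathfrak{s}$ with a subalgebra $\mathfrak{a}$ and an invariant Cartan connection on $S/A$, whose coframe integrates to explicit coordinates. Passing each homogeneous Cartan geometry through the correspondence between ILC structures and complete second-order PDE systems modulo point transformations (set up earlier) yields a system $u_{ij}=f_{ij}(x,u,\partial u)$ whose components are the table entries and whose symmetry dimension is the label after the dot. The $E\leftrightarrow F$ duality acts on the reduced data, a branch being self-dual exactly when its structure equations are invariant under this involution (column $\mathrm{SD}$); one keeps one representative per dual pair. Finally one verifies pairwise inequivalence --- distinct Petrov types and distinct symmetry dimensions separate automatically, while within a fixed type and dimension the residual discrete gauge freedom reproduces the stated identifications $\lambda\sim\tfrac1\lambda$, $\mu\sim 1-\mu$, $\kappa\sim-3-\kappa$, and so on --- and that no branch has been omitted; with the flat case this is the theorem.

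I expect the main obstacle to be Step~2: the Bianchi analysis fragments into many sub-cases whose integrability constants must be determined, and several of the resulting coframes need a non-obvious substitution to integrate in closed form --- the factor $R=\sqrt{u-pq}/u^{3/2}$ in model $\mathrm{D.6}$-$3$ is a symptom --- while the return to PDE form, the self-duality check, and the bookkeeping of equivalences are long but routine. By contrast, the prolongation input of Step~1 is the only genuinely soft ingredient.
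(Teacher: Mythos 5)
Your plan is essentially the paper's own proof: exclude types I and II via Theorem \ref{T:Petrov-sym} (so only N, D, III can be multiply transitive), then carry out Cartan's reduction of the regular normal Cartan connection within each type, integrate the resulting constant-coefficient structure equations to PDE models, and handle duality and parameter redundancies afterwards, exactly as in Sections \ref{S:five}--\ref{S:Cartan} and the Appendix. The only slight mismatch is cosmetic: the bound for types I--III in Theorem \ref{T:Petrov-sym} is not a bare prolongation estimate but the Kruglikov--The annihilator bound sharpened by a filtered-deformation argument, which is still established independently of the classification, so your Step 1 goes through as claimed.
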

\begin{remark}
We denote by $u_{11}, u_{12}, u_{22}$ the second order partial derivatives of the unknown function $u$, and use the notation $p=u_1, q=u_2$ for the first order derivatives. 

The functions $\cF_\mu$ and $\cG_\kappa$ are defined as follows:
 \[ 
 \begin{array}{ll}
 \cF_\mu(z)&=\left\{ \begin{array}{ll} 
 z^\mu, & \mu \in \bbC \backslash \{ 0, 1\}\\
 \ln(z), & \mu = 0\\
 z\ln(z), & \mu = 1\\
 \exp(z), & \mu = \infty
 \end{array} \right.
\\
\\
 \cG_\kappa(z) &=\left\{ \begin{array}{ll} 
 z^\kappa, & \kappa \in \bbC \\
 \exp(z), & \kappa = \infty
 \end{array} \right.
 \end{array}\]
In particular, the parameters $\mu,\kappa$ are both allowed to take the value $\infty$ if the contrary is not stated.
\end{remark}

\begin{remark}
 A checkmark or cross under the SD column indicates that \emph{every} element in the indicated family is self-dual or not self-dual, respectively.  The situation for N.6-2 is more complicated. The corresponding ILC structure is self-dual if and only if the parameters $\mu$ and $\kappa$ satisfy $\mu-\kappa-2=0$ or $\mu+\kappa+1=0$ (see Table~\ref{Ap:Dual}).
\end{remark}

\begin{remark}
Equations from different items in this list correspond to inequivalent ILC structures. However, there are some additional equivalence relations on parameter spaces for multi-parameter equations within the same item. They are indicated in the last column of Table~\ref{F:ILC-classify}. 

Our labelling abides by the following rules. The first letter (N, D, or III) denotes the type of the invariant binary quartic. The next digit (6, 7, or 8) refers to the dimension of the symmetry algebra. The final digit is a labelling of the equation within the given subclass. Finally, the case D.6-3${}_\infty$ is a limit of D.6-3 as the parameter $\lambda$ tends to infinity.
\end{remark}

Table~\ref{TT:algebras} describes basic algebraic properties of symmetry algebras for obtained models.
   \begin{table} 
   \caption{Symmetry algebras of multiply transitive ILC structures}\label{TT:algebras}
\[
\begin{array}{llll}
  \toprule
  \mbox{Model} & \mbox{Derived series (DS)} & \mbox{Nilradical} & \mbox{Comments} \\ \midrule
  \mbox{N.8}  & [8,6,4,0] & \mbox{6-dim, DS = [6,4,0], LCS = [6,4,3,1,0]}\\ \midrule
  \mbox{N.7-1} & [7,5,2,0] & \mbox{5-dim, DS = LCS = [5,2,0]}\\ \midrule
  \mbox{N.7-2} & [7,6,6] & \mbox{4-dim abelian} & (\fso_3 \ltimes \bbC^3) \times \bbC\\ \midrule
  \mbox{N.6-1} & \left\{ \begin{array}{ll} [6,5,2,0], & \mu\ne 0\\ {} [6,4,1,0], & \mu=0\end{array} \right. & \mbox{5-dim, DS = [5,2,0], LCS = [5,2,1,0]}\\ \midrule
  \mbox{N.6-2} & [6,4,0] & 
  \left\{ \begin{array}{ll} 
  \begin{array}{@{}l} \mbox{5-dim, DS = [5,2,0],} \\ \,\, \quad \mbox{ LCS = [5,2,1,0]} \end{array}, & \mu=\kappa=\infty\\
  \mbox{4-dim abelian}, & \mbox{otw} \\ 
   \end{array} \right. \\ \midrule
  \mbox{D.7} & \left\{ \begin{array}{ll} [7,6,6], & \lambda\ne 0 \\ {} [7,6,4,3,3], & \lambda= 0 \end{array} \right. & \left\{ \begin{array}{ll} \mbox{1-dim abelian}, & \lambda\ne0 \\ {} \mbox{3-dim Heisenberg}, & \lambda=0 \end{array} \right. 
&  \begin{array}{ll} \fsl_2\times\fsl_2\times\bbC \\ \ \end{array} \\ \midrule
  \mbox{D.6-1}  & [6,6] & \mbox{1-dim} & \fsl_2 \ltimes \fs_3, \\ 
                            &         &                          &  \fs_3 \mbox{ is Heisenberg} \\ \midrule
  \mbox{D.6-2} & [6,4,1,0] & \mbox{4-dim, DS = LCS = [4,1,0]}\\ \midrule
  \mbox{D.6-3} & [6,6] & \mbox{0-dim} & \fsl_2\times \fsl_2\\ \midrule
  \mbox{D.6-3${}_\infty$} & [6,6] & \mbox{3-dim abelian} & \fso_3\ltimes \bbC^3\\ \midrule
  \mbox{D.6-4} & [6,6] & \mbox{0-dim} & \fsl_2\times \fsl_2\\ \midrule
  \mbox{III.6-1} & [6,4,2,0] & \mbox{4-dim, DS = [4,2,0], LCS = [4,2,1,0]}\\ \midrule
  \mbox{III.6-2} & [6,5,5] & \mbox{2-dim abelian} & \fgl_2\ltimes\bbC^2\\ \bottomrule
\end{array}
\]
\end{table}

The paper is organized as follows. In Section~\ref{S:LCS} we provide generalities concerning Legendrian contact structures, establish the relationship between ILC structures and compatible complete systems of 2nd order PDEs, discuss the notion of duality, define the (regular, normal) Cartan connection associated with a given ILC structure, and provide explicit formulas for the fundamental (harmonic) part of its curvature. 

In Section~\ref{S:five} we specialize to 5-dimensional ILC structures, define the fundamental binary quartic and prove that ILC structures of types I and II cannot be multiply transitive. We also reconstruct the full curvature tensor of the Cartan geometry. 

In Section~\ref{S:Cartan} we proceed with the detailed Cartan analysis of the general regular, normal Cartan connection, which involves normalizing parts of the curvature and its derivatives, reducing the Cartan bundle and iterating the procedure. As we are interested only in multiply transitive ILC structures, we terminate this process as soon as the fibers become 0-dimensional. This leads us to the list of all possible structure equations for the reduced bundles. We integrate each of these structure equations and come up with the corresponding ILC model defined in terms of the system of 2nd order PDEs. Finally, in the Appendix we give the detailed Lie algebra isomorphisms establishing the correspondence between the Cartan equations of the reduced bundle and the model systems of 2nd order PDEs, the equivalence relations on the parameters and the duality.
 
\medskip
\textbf{Acknowledgements:} The \texttt{Cartan} and \texttt{DifferentialGeometry} packages in Maple (written by Jeanne Clelland and Ian Anderson respectively) provided an invaluable framework for implementing the Cartan reduction method and subsequently carrying out the analysis of the structures obtained. The work of the second and third authors was supported by ARC Discovery grants DP130103485 and DP110100416 respectively.  D.T. was also supported by project M1884-N35 of the Austrian Science Fund (FWF).
 
 \section{Legendrian contact structures}\label{S:LCS}
 
 \subsection{Generalities} \label{S:gen}
 On any contact manifold $(M,C)$, the contact distribution $C \subset TM$ is locally defined by the vanishing of a 1-form $\sigma$ (unique up to multiplication by a non-vanishing function), and $d\sigma|_C$ is a (conformal) symplectic form.   Given a splitting $C = E \op F \subset TM$ into transverse Legendrian subdistributions $E$ and $F$, i.e.\ $d\sigma|_E = 0$ and $d\sigma|_F = 0$, we say $(M; E,F)$ is a {\em Legendrian contact (LC) structure}.  Let $\dim(M) = 2n+1$, so $n = \rnk(E) = \rnk(F)$.  Two LC structures $(M; E,F)$ and $(\tilde{M}; \tilde{E}, \tilde{F})$ are {\em (locally) equivalent} if there exists a (local) diffeomorphism $\phi : M \to \tilde{M}$ such that $d\phi(E) = \tilde{E}$ and $d\phi(F) = \tilde{F}$.  There is also a natural notion of duality of LC structures: the {\em dual} of $(M;E,F)$ is $(M;F,E)$.
 
 Since $E$ and $F$ are Legendrian, then $[E,E] \subset C$ and $[F,F] \subset C$.  The projections from $C$ onto $E$ and $F$ induce maps $\tau_E : \Gamma(E) \times \Gamma(E) \to \Gamma(F)$ and $\tau_F : \Gamma(F) \times \Gamma(F) \to \Gamma(E)$ that obstruct the integrability of $E$ and $F$.  The structure is {\em semi-integrable} or {\em integrable} according to whether one or both of $\tau_E,\tau_F$ are identically zero.  In the latter case, we call it an {\em ILC structure}.
 
 \begin{prop} \label{P:V-std}
 Given any contact manifold $(M,C)$ of dimension $2n+1$ and a rank $n$ integrable subdistribution $V \subset C$, we may choose local coordinates $(x^i,u,p_i)$ on $M$ such that contact form is $\sigma = du - p_i dx^i$ and $V = \tspan\{ \partial_{p_i} \}$.
 \end{prop}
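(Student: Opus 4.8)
The plan is to build the required coordinate system by a sequence of Darboux-type reductions, using the integrability of $V$ as the crucial extra input beyond the usual Darboux theorem. First I would invoke the classical Darboux theorem on the contact manifold $(M,C)$ to obtain coordinates $(y^i, z, w_i)$, $1 \le i \le n$, in which $C = \ker(dz - w_i\, dy^i)$, so that the contact form may be taken to be $\sigma_0 = dz - w_i\, dy^i$. At this stage $V$ is merely some rank $n$ integrable Legendrian subdistribution of $C$; the content of the proposition is that a further contactomorphism can be arranged to carry $V$ to the vertical distribution $\tspan\{\partial_{w_i}\}$. The natural strategy is to produce a contactomorphism of $(M,C)$ — equivalently, by the standard correspondence, a point transformation on the base together with the induced prolongation — that straightens $V$; I expect that the cleanest route is via the characterization of contactomorphisms as prolongations of local diffeomorphisms of a chosen Legendrian fibration, combined with an application of the Frobenius theorem to the foliation integrating $V$.

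Concretely, the key steps I would carry out are: (1) integrate $V$ by Frobenius to obtain a local $n$-dimensional foliation $\cV$ of $M$ whose leaves are tangent to $V$; (2) observe that, since $V \subset C$ and $V$ is Legendrian, the leaf space $N := M/\cV$ is locally a smooth manifold of dimension $n+1$ and the contact structure descends to give $M$ the structure of (an open subset of) the space of contact elements — or, more efficiently, the structure of a Legendrian fibration $\pi : M \to N$ with fibers the leaves of $\cV$; (3) choose coordinates on $N$ and on a complementary Legendrian fibration and apply the classification of such double fibrations to put $\sigma$ into the normal form $du - p_i\, dx^i$ with the fibers of $\pi$ being exactly the level sets of $(x^i,u)$, i.e.\ $V = \tspan\{\partial_{p_i}\}$. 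An equivalent and perhaps more self-contained execution: start from the Darboux coordinates $(y^i,z,w_i)$, note that $V$ is spanned by $n$ vector fields lying in $C$, use integrability to adapt these to a coordinate chart, and then check directly that the transition to the desired chart is a contactomorphism by verifying it pulls $dz - w_i\,dy^i$ back to a nonzero multiple of $du - p_i\,dx^i$ — the multiplier being absorbed since $\sigma$ is only defined up to scale.

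The main obstacle — and the step deserving the most care — is step (2)/(3): showing that an integrable Legendrian $V$ genuinely arises as the vertical distribution of a Legendrian fibration, i.e.\ that the leaf space is Hausdorff (locally) and that the descended data is rich enough to reconstruct the contact form in the stated normal form. The subtlety is that not every Legendrian foliation is "vertical" for a contact form in the Darboux chart we started with; one must use the freedom to rescale $\sigma$ and to change the transverse coordinates. I would handle this by fixing a leaf $L_0$ of $\cV$ through the base point, choosing a transversal $T$, and using the contact Hamiltonian flow (or simply the Legendrian condition $d\sigma|_V = 0$) to set up functions $x^1,\dots,x^n,u$ that are constant on leaves of $\cV$ and whose differentials, together with $\sigma$, frame $C^\perp \oplus (\text{something})$; the remaining functions $p_i$ are then forced by requiring $\sigma \equiv du - p_i\, dx^i$. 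Everything else — the Frobenius application in step (1), and the final verification that the constructed map is the promised coordinate system — is routine, so I would not belabor it.
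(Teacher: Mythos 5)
Your strategy works, but it is packaged much more heavily than necessary, and your primary route skirts circularity. The paper's proof uses no Darboux normalization and no leaf space: it applies Frobenius directly to $V$ to get coordinates with $V=\ker\{dx^1,\dots,dx^{n+1}\}$, then observes that since $V\subset C=\ker\{\sigma\}$ the contact form annihilates $V$ and hence $\sigma=\lambda_1\,dx^1+\dots+\lambda_{n+1}\,dx^{n+1}$; because $\sigma$ is nowhere vanishing one may relabel and rescale so that $\lambda_{n+1}=1$, set $u=x^{n+1}$ and read the $p_i$ off the remaining coefficients, and the contact condition $(d\sigma)^n\wedge\sigma\neq 0$ is exactly what guarantees that $(x^i,u,p_i)$ is a coordinate system, after which $V=\tspan\{\partial_{p_i}\}$ is automatic. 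By contrast, your first route (pass to the leaf space $M/\cV$, view $M\to N$ as a Legendrian fibration, and ``apply the classification of such double fibrations'') invokes the local normal form for Legendrian fibrations, which is essentially the proposition itself, so unless you prove that classification you have gained nothing; also the phrase ``differentials, together with $\sigma$, frame $C^\perp\oplus(\text{something})$'' is off, since the crucial point is that $\sigma$ lies \emph{in} the span of the differentials of the leafwise-constant functions, not transverse to it. Your second, self-contained execution is sound and, once made precise, collapses to exactly the paper's computation; note only that the two checks you defer as routine --- arranging a nonzero coefficient on $du$ and verifying that the $p_i$ are independent of $(x^i,u)$ --- are precisely where the nonvanishing of $\sigma$ and the contact condition must be used, just as in the paper.
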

 
 \begin{proof}
 Since $V$ is integrable and rank $n$, then by the Frobenius theorem there exist local coordinates $\{ x^i \}_{i=1}^{2n+1}$ such that $V = \ker\{ dx^1 = ... = dx^{n+1} \}$.  Hence, $C = \ker\{ \sigma \}$, where $\sigma = \lambda_1 dx^1 + ... + \lambda_{n+1} dx^{n+1}$.  The contact condition $(d\sigma)^n \wedge \sigma \neq 0$ implies that not all $\lambda_i$ can simultaneously vanish, so WLOG $\lambda_{n+1} \neq 0$ locally, and after rescaling we may assume $\lambda_{n+1} = 1$.  Now define 
 $u = x^{n+1}$ and $p_i = \lambda_i$.  The contact condition guarantees that $(x^i,u,p_i)$ is indeed a coordinate system.
 \end{proof}
 
 Suppose that $V:=F$ is integrable, i.e.\ the LC structure is semi-integrable.  By Proposition \ref{P:V-std}, there exist functions $f_{ij} = f_{ij}(x^k,u,p_\ell)$ with $f_{ij} = f_{ji}$ (since $E$ is Legendrian) such that
 \begin{align} \label{E:EV}
 E = \tspan\{ \cD_i := \partial_{x^i} + p_i \partial_u + f_{ij} \partial_{p_j} \}, \qquad V = \tspan\{ \partial_{p_i} \}.
 \end{align}
 Equivalently, we are studying the geometry of the system of scalar 2nd order PDE 
 \begin{align} \label{E:PDE}
 \frac{\partial^2 u}{\partial x^i \partial x^j} = f_{ij}(x,u,\partial u), \qquad 1 \leq i, j \leq n,
 \end{align}
 considered up to {\em point transformations}.  These are contact transformations that preserve the (vertical) bundle $V$.  All such transformations are precisely the prolongations of arbitrary diffeomorphisms in the $(x^i,u)$ variables.  The system \eqref{E:PDE} is overdetermined if $n > 1$.  If $n=1$, then \eqref{E:PDE} is a single 2nd order ODE, whose point geometry has been well-studied \cite{Tresse1896}.
 
 \begin{remark}  Consider the jet spaces $J^k = J^k(\bbC^n,\bbC)$ and projections $\pi^k_\ell : J^k \to J^\ell$.  On $J^2$, the contact system is $\{ du - p_i dx^i, dp_i - p_{ij} dx^j \}$, expressed in standard jet coordinates.  Pulling back to a submanifold $\cE$ defined by $p_{ij} = f_{ij}(x^k,u,p_\ell)$ yields the subbundle $E$ in \eqref{E:EV}.  The restriction $\pi^2_1|_\cE : \cE \to J^1$ is a local diffeomorphism.  The subbundle $V$ in \eqref{E:EV} is tangent to the fibers of $\pi^1_0 \circ \pi^2_1$.
 \end{remark}
 
 \begin{lemma}
 The PDE system \eqref{E:PDE} is compatible if and only if $E$ in \eqref{E:EV} is integrable.
 \end{lemma}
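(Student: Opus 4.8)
The plan is to show that both the compatibility of \eqref{E:PDE} and the integrability of $E$ are equivalent to the single family of first-order identities $\cD_i f_{jk} = \cD_j f_{ik}$ for all $i,j,k$, where the $\cD_i$ are the operators in \eqref{E:EV}.

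First I would pin down the meaning of \emph{compatibility}. Since \eqref{E:PDE} is a \emph{complete} system --- it determines every pure second derivative $p_{ij}$ as a function on $J^1$ --- the only obstructions to prolonging a $1$-jet to a solution are the cross-differentiation conditions $\partial_{x^k}\!\big(\partial_{x^i}\partial_{x^j}u\big) = \partial_{x^j}\!\big(\partial_{x^i}\partial_{x^k}u\big)$ evaluated along a formal solution; there are no deeper prolongation conditions. Substituting $p_{ij}=f_{ij}$ and applying the chain rule, $\partial_{x^k}\big(f_{ij}(x,u,\partial u)\big) = \big(\partial_{x^k} + p_k\partial_u + f_{kl}\partial_{p_l}\big)f_{ij} = \cD_k f_{ij}$ as a function on $\cE$ (recall $\pi^2_1|_\cE : \cE \to J^1$ is a local diffeomorphism, so this is an honest condition on the $f_{ij}$). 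Hence compatibility of \eqref{E:PDE} means precisely $\cD_k f_{ij} = \cD_j f_{ik}$ for all $i,j,k$.

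Next I would compute $[\cD_i,\cD_j]$ directly from \eqref{E:EV}. The $\partial_{x^l}$-coefficients of the $\cD$'s are constants and contribute nothing; the $\partial_u$-coefficient of the bracket is $\cD_i p_j - \cD_j p_i = f_{ij} - f_{ji} = 0$ by the symmetry $f_{ij}=f_{ji}$ (which holds because $E$ is Legendrian); and the $\partial_{p_l}$-coefficient is $\cD_i f_{jl} - \cD_j f_{il}$. Thus
\[
[\cD_i,\cD_j] = \big(\cD_i f_{jl} - \cD_j f_{il}\big)\partial_{p_l} \in \Gamma(V).
\]
Since $V = F$ and $E$ are transverse ($E \cap V = 0$, being the two summands of $C = E \oplus F$), a section of $V$ lies in $E$ if and only if it vanishes. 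Therefore $[\cD_i,\cD_j]\in\Gamma(E)$ for all $i,j$ --- i.e.\ $E$ is integrable --- if and only if $\cD_i f_{jl} = \cD_j f_{il}$ for all $i,j,l$, which after relabelling and using $f_{ij}=f_{ji}$ is exactly the compatibility condition from the previous step. Combining the two equivalences yields the lemma.

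I do not expect a genuine obstacle: the content is just the bracket computation already prepared by \eqref{E:EV}. The only points requiring care are bookkeeping ones --- correctly identifying the Frobenius-type cross-differentiation conditions as the \emph{full} set of compatibility conditions for a complete system, and observing that the $\partial_u$-component of $[\cD_i,\cD_j]$ vanishes solely because $f_{ij}$ is symmetric.
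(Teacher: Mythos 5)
Your proof is correct and follows essentially the same route as the paper: both arguments reduce the lemma to the observation that $[\cD_i,\cD_j]$ is a section of $V$, hence lies in $E$ if and only if it vanishes, which happens exactly when $\cD_i f_{jk}=\cD_j f_{ik}$, the compatibility condition of \eqref{E:PDE}. Your write-up merely spells out the details the paper leaves implicit (the explicit bracket computation, the role of $f_{ij}=f_{ji}$ in killing the $\partial_u$-component, and the identification of compatibility with the cross-differentiation conditions via $\cD_k$), all of which are accurate.
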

\begin{proof}
 It is easy to see that $[\cD_i,\cD_j]\in E$ if and only if $[\cD_i,\cD_j]=0$, which happens if and only if $\cD_j f_{ik}= \cD_i f_{jk}$ for $1\le i,j,k \le n$. This is exactly the compatibility condition of~\eqref{E:PDE}.
\end{proof}
 
 \subsection{Duality}
 If the dual LC structures $(M; E, F)$ and $(M; F, E)$ are equivalent, then we say that the structure is \emph{self-dual}.  For ILC structures, the notion of duality generalizes the classical duality for 2nd order ODE~\cite{Cartan1924}.  Namely, for the ILC structure $(M;E,V)$ given by \eqref{E:EV}, we can (by Proposition \ref{P:V-std}) find coordinates $(y^i,v,q_i)$ for the dual ILC structure $(M;V,E)$, i.e.
 \[
 V = \tspan\{ \partial_{y^i} + q_i \partial_v + \tilde{f}_{ij} \partial_{q_j} \}, \quad
  E = \tspan\{ \partial_{q_i} \}.
 \]
 Then $\frac{\partial^2 v}{\partial x^i \partial x^j} = \tilde{f}_{ij}$
 is the \emph{dual system} to \eqref{E:PDE} (and is well-defined only up to point transformations).
 
 \begin{example}
 The simplest example of an ILC structure is the flat model $u_{ij} = 0$.  The Legendre transformation $(y^i,v,q_k) = (p_i, u-p_j x^j, -x^k)$, is a contact (but non-point) transformation which swaps the $E$ and $V$ subbundles, so this structure is self-dual.
 \end{example}

 \begin{example}
 For ILC structures when $n=2$, we have the self-dual D.7 systems:
 \[
 \cS_\lambda : \quad u_{11} = p^2, \quad u_{12} = 0, \quad u_{22} = \lambda q^2, \qquad \lambda \in \bbC \backslash \{ -1 \},
 \]
 where $p = u_1$ and $q = u_2$.
 For fixed $\lambda$, a self-duality, i.e.\ a swap $(E,V) \mapsto (V,E)$, is exhibited by
 \[
 \Phi(x,y,u,p,q) = \left\{ \begin{array}{ll} \left(-\lambda (x+\frac{1}{p}),-(y+\frac{1}{\lambda q}),-u+\ln(-p)+\frac{1}{\lambda} \ln(-q),\frac{p}{\lambda},q \right), & \lambda \neq 0;\\
 (-(x+\frac{1}{p}),-q,-u+qy+\ln(-p), p,-y), & \lambda = 0\\ \end{array} \right.
 \]
 Moreover, $\cS_\lambda \cong \cS_{1/\lambda}$ when $\lambda \neq 0$ via the transformation $\Phi(x,y,u,p,q) = (y,x,\lambda u, \lambda q, \lambda p)$.
 \end{example}
 
As in the case of dual 2nd order ODEs, the dual ILC structures can be constructed in terms of the corresponding PDE models via swapping the space of independent and dependent variables with the space of integration constants parametrizing solutions of a given compatible PDE. In more detail, the general solution of any compatible system~\eqref{E:PDE} is parametrized by $n+1$ constants of integration and can be written as:
\begin{equation}\label{E:Dual}
F(x^i,u; a^j,b)=0,\qquad 1\le i,j \le n.
\end{equation}
We can consider this as an $(n+1)$-parameter family of hypersurfaces in $(x^i,u)$-space with parameter space $(a^j,b)$.  On the other hand, we can (locally) regard $b$ as a function of $a^j$, so that \eqref{E:Dual} can be interpreted as an $(n+1)$-parameter family of hypersurfaces in $(a^j,b)$-space with parameter space $(x^i,u)$.  This is the solution space of a well-defined compatible system of 2nd order PDE's on $b(a^j)$. 

Algorithmically, we construct the dual PDE system by differentiating \eqref{E:Dual} with respect to $a^j$ (regarding $x^i, u$ as constants and $b$ as a function of $a^j$), solving the obtained system of $n+1$ equations with respect to $x^i,u$ and substituting the solution into the second order derivatives of~\eqref{E:Dual} with respect to $a^j$. 
 \begin{example}
In the simplest example of the flat equation $u_{ij}=0$ the general solution is given by:
\[
u = a^1 x^1 + \dots a^n x^n + b.
\]
Treating $b$ as a function of $a^j$, differentiating this solution twice and excluding $x^i,u$ we get the same flat equation $b_{ij}=0$. This again demonstrates the self-duality of the flat model.
 \end{example}

 \begin{example}
 The III.6-1 system $u_{11} = \frac{p}{x-q}$, $u_{12} = u_{22} = 0$ has general solution
 \[
 u = -ay+c-b (x+a)^2, \qquad a,b,c \in \bbC.
 \]
 Regarding $c$ as a function of $a,b$ and treating $x,y,u$ as parameters, we have $c_a = y + 2b(x+a)$, $c_b = (x+a)^2$, and
 \[
 c_{aa} = 2b, \quad c_{ab} = 2(x+a) = \pm2\sqrt{c_b}, \quad c_{bb} = 0.
 \]
 WLOG, the $\pm$ ambiguity can be eliminated: the corresponding PDE systems are equivalent via the point transformation $(a,b,c) \mapsto (-a,b,c)$. Thus, the dual system to III.6-1 is 
 \[
 u_{11} = 2y, \quad u_{12} = 2 \sqrt{q}, \quad u_{22} = 0.
 \]
 Our classification indicates that III.6-1 is not self-dual (but a priori this is not at all obvious).
 \end{example}

 \subsection{LC structures as parabolic geometries}
 \label{S:LC-parabolic}

 There is an equivalence of categories between (holomorphic) LC structures $(M;E,F)$ and (regular, normal) parabolic geometries $(\cG\to M,\omega)$ of a fixed type $(G,P)$ \cite{CS2009}.  Here, $G = \operatorname{PGL}(n+2,\bbC)$ acts on the flag variety of pairs of incident lines and hyperplanes:
 \[
  G/P \cong \Flag_{1,n+1}(\bbC^{n+2}) = \{ (\ell,\pi) : \pi(\ell) = 0 \} \subset \bbC\bbP^{n+1} \times (\bbC\bbP^{n+1})^*,
 \]
 and $P \subset G$ is the parabolic subgroup which is the stabilizer of a chosen origin.  Since $A \in \GL(n+2,\bbC)$ and $\lambda A$ (for $\lambda \in \bbC^\times$) have the same action on $G/P$, we will instead use $G = \SL(n+2,\bbC)$.  The kernel of this action is isomorphic to the cyclic group $\bbZ_{n+2}$, generated by multiples of the identity matrix by $(n+2)$-th roots of unity.  In terms of Lie algebras, $P$ corresponds to the parabolic subalgebra $\fp\subset \fg=\fsl_{n+2}$ defined by the contact grading:
 \begin{align} \label{E:sl}
 \fsl_{n+2} = \l\{ \left[ \begin{array}{ccc} a & U & \gamma\\ X & A & W\\ \beta & Y & b \end{array} \right] : \begin{array}{l} b= -a -\tr(A), \\ a \in \bbC, \, A \in \fgl_n,\\ \mbox{etc.} \end{array} \r\} =  \fg_{-2} \op \fg_{-1} \op \overbrace{\fg_0 \op \underbrace{\fg_1 \op \fg_2}_{\fp_+}}^\fp.
 \end{align}
 The reductive part $G_0 \subset P$ has corresponding subalgebra $\fg_0 \cong \bbC^2 \times \fsl_n$ (corresponding to the diagonal blocks $(a,A,b)$) and there is a unique element $Z \in \cZ(\fg_0)$ that induces the grading.  We refer to the eigenvalues of $Z$ on a particular $\fg_0$-module as its {\em homogeneities}.

 At the origin $o \in G/P$, we have $T_o(G/P) \cong \fg/ \fp$.  Define the subspaces $E_o, F_o \subset T_o(G/P)$ (or subspaces in $\fg_{-1} / \fp$) corresponding to $X, Y$ in \eqref{E:sl} respectively.  The induced $G$-invariant structure $(G/P; E, F)$ is the {\em flat} LC structure, and $(G \to G/P, \omega_G)$ is the {\em flat} model, where $\omega_G$ is the Maurer--Cartan form on $G$.  The dimension of the Lie algebra of (infinitesimal) symmetries of the flat model is $\dim(G) = n^2 + 4n + 3$.
 
 A Cartan geometry $(\cG \to M, \omega)$ of type $(G,P)$ is a curved analogue of the flat model.  It consists of a principal $P$-bundle $\cG \to M$ equipped with a Cartan connection $\omega \in \Omega^1(\cG;\fg)$.  This means:
 \begin{enumerate}
 \item[(CC.1)] $\omega_u : T_u \cG \to \fg$ is a linear isomorphism for any $u \in \cG$;
 \item[(CC.2)] $R_p^* \omega = \Ad_{p^{-1}} \circ \omega$ for any $p \in P$;
 \item[(CC.3)] $\omega(\zeta_A) = A$ for any $A \in \fp$, where $\zeta_A(u) = \frac{d}{dt}|_{t=0} R_{\exp(tA)} (u)$, i.e. $\zeta_A$ is the fundamental vertical vector field corresponding to $A$.
 \end{enumerate}
 The curvature of $(\cG \to M,\omega)$ is the 2-form $K = d\omega + \frac{1}{2} [\omega, \omega] \in \Omega^2(\cG;\fg)$.  Using the framing of $T \cG$ provided by $\omega$ yields a $P$-equivariant function $\kappa : \cG \to \bigwedge^2 \fg^* \ot \fg$ which descends to $\kappa : \cG \to \bigwedge^2 (\fg/\fp)^* \ot \fg$ since $K$ is horizontal.  For parabolic geometries, the Killing form on $\fg$ yields a $P$-module isomorphism $(\fg / \fp)^* \cong \fp_+$, so we obtain a function $\kappa : \cG \to \bigwedge^2 \fp_+ \ot \fg$.  The geometry is 
 \begin{itemize}
 \item {\em regular} if $\kappa$ is valued in the subspace of $\bigwedge^2 \fp_+ \ot \fg$ consisting of positive homogeneities;
 \item {\em normal} if $\partial^* \kappa = 0$, where $\partial^*$ is the Lie algebra homology differential.
 \end{itemize}

 \subsection{Harmonic curvature}
 
 For regular, normal parabolic geometries, since $(\partial^*)^2 = 0$, we may 
 quotient $\kappa$ by $\im(\partial^*)$ to obtain $\kappa_H : \cG \to 
 \frac{\ker(\partial^*)}{\im(\partial^*)}$.  This fundamental curvature 
 quantity is called {\em harmonic curvature} and is a complete obstruction to 
 flatness of the geometry.  The $P$-module 
 $\frac{\ker(\partial^*)}{\im(\partial^*)}$ is completely reducible, so $\fp_+$ 
 acts trivially.  By a result of Kostant \cite{Kos1961, CS2009}, the 2-cochains 
 $C^2(\fg_-,\fg)$ admit the (orthogonal) $\fg_0$-module decomposition
 \begin{align} \label{E:Hodge}
 C^2(\fg_-,\fg) = \lefteqn{\overbrace{\phantom{\im(\partial^*) \op \ker(\Box)}}^{\ker(\partial^*)}}\im(\partial^*) \op \underbrace{\ker(\Box) \op  \im(\partial)}_{\ker(\partial)}, 
  \end{align}
 where $\partial$ is the Lie algebra differential, and $\Box = \partial 
 \partial^* + \partial^* \partial$ is the Kostant Laplacian. Thus,
 \[
 \frac{\ker(\partial^*)}{\im(\partial^*)} \cong \ker(\Box) \cong \frac{\ker(\partial)}{\im(\partial)} =: H^2(\fg_-,\fg).
 \]
 The $\fg_0$-module structure of the Lie algebra cohomology group $H^2(\fg_-,\fg)$ is completely described by Kostant's Bott--Borel--Weil theorem \cite{Kos1961, BE1989, CS2009}.  For LC structures with $n \geq 2$, $H^2(\fg_-,\fg)$ decomposes into three $\fg_0$-irreps
 \[
 H^2(\fg_-,\fg) = \bbW \op \bbT_1 \op \bbT_2
 \]
 having homogeneities $+2,+1,+1$ respectively.  The $\bbT_1$ and $\bbT_2$ components of $\kappa_H$ are precisely the torsions $\tau_E$ and $\tau_F$ (see Section \ref{S:gen}), and these vanish in the ILC case.  Results from twistor theory (see \cite{Cap2005}) indicate that the LC structures with trivial $\bbW$ and $\bbT_2$ components for $\kappa_H$ correspond to projective structures.  This is the case that was studied by Takeuchi \cite{Tak1994}.
 
 \subsection{Parametric computations of harmonic curvature}
 \label{S:KH-compute}

 Consider a semi-integrable LC structure  $(M;E,V)$ given by \eqref{E:EV}.  We will give an explicit formula for the $\bbW$-component of $\kappa_H$.

 We use the following co-frame for computations on the manifold $M$:
 \begin{align*}
 \theta^i =dx^i, \qquad
 \pi_i =dp_i-f_{ij}\,dx^j,\qquad
 \sigma =du-p_i\,dx^i, \qquad 1 \leq i,j \leq n,
 \end{align*}
 so that
 \[
 E = \ker\{ \sigma, \pi_i \}, \qquad V = \ker\{ \sigma, \theta^i \}.
 \]
 The differential of an arbitrary function $F$ is defined by the formula:
\[ d F= \frac{d F}{d x^i} \theta^i+\frac{\p F}{\p p_i} \pi_i+ \frac{\p F}{\p u} \sigma,\]
where $\frac{d}{d x^i} := \cD_i$ (see \eqref{E:EV}) is the total derivative with respect to $x^i$.

 Let $(\cG,\omega)$ be any {\em regular} Cartan geometry of type $(G,P)$ with underlying structure $(M;E,V)$ and curvature $K$.  Let $E_a{}^b \in \fgl_{n+2}$ denote the element with 1 in the $a$-th row and $b$-column and $0$ otherwise.  Here, we let $0 \leq a,b \leq n+1$.  If $s : M \to \cG$ is any (local) section, write
 \[
 s^*\omega = \omega^a{}_b \, E_a{}^b, \quad s^*K = K^a{}_b \, E_a{}^b,
 \]
 where $K^a{}_b = d\omega^a{}_b + \omega^a{}_c \wedge \omega^c{}_b$.

\begin{lemma}\label{lem1}
There exists a section $s\colon M\to\cG$ such that $s^* \omega$ satisfies
\[ 
 \omega^{n+1}{}_0 = \sigma,
\quad
\omega^i{}_0 = \theta^i,
\quad
\omega^{n+1}{}_i = \pi_i, 
\quad
 \omega^0{}_0 \equiv 0 \,\,\mod \{  \theta^i, \pi_i \}
\]
\end{lemma}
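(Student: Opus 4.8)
The plan is to produce $s$ by starting from an \emph{arbitrary} section $s_0\colon M\to\cG$ and applying a gauge transformation $s=s_0\cdot g$ with $g\colon M\to P$, normalizing the components of $s_0^*\omega$ in order of increasing homogeneity (first the $G_0$-part of $g$, then the $\fg_1$-part, then the $\fg_2$-part). First I would record the constraints on $s_0^*\omega$ coming from the hypothesis that $(\cG,\omega)$ induces the underlying structure $(M;E,V)$: for \emph{any} section $s$, the induced filtration and decomposition recover $C$ as $\ker(s^*\omega^{n+1}{}_0)$, the subbundle $V$ as the common kernel of $s^*\omega^{n+1}{}_0$ and $s^*\omega^1{}_0,\dots,s^*\omega^n{}_0$, and $E$ as the common kernel of $s^*\omega^{n+1}{}_0$ and $s^*\omega^{n+1}{}_1,\dots,s^*\omega^{n+1}{}_n$. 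Comparing with $C=\ker\sigma$, $V=\ker\{\sigma,\theta^i\}$, $E=\ker\{\sigma,\pi_i\}$ forces
\[
 s_0^*\omega^{n+1}{}_0=\lambda\,\sigma,\qquad
 s_0^*\omega^i{}_0=A^i{}_j\,\theta^j+c^i\,\sigma,\qquad
 s_0^*\omega^{n+1}{}_j=B_j{}^k\,\pi_k+d_j\,\sigma,
\]
for functions $\lambda,A^i{}_j,B_j{}^k,c^i,d_j$ on $M$ with $\lambda$ nowhere zero and $(A^i{}_j),(B_j{}^k)$ invertible.

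The crucial point is a relation among the leading coefficients. By Proposition~\ref{P:V-std} the chosen coframe satisfies $d\sigma=\theta^i\wedge\pi_i$, so pulling back the structure equation $K^{n+1}{}_0=d\omega^{n+1}{}_0+\omega^{n+1}{}_c\wedge\omega^c{}_0$ by $s_0$ and extracting the coefficient of $\theta^l\wedge\pi_k$ shows that this coefficient of $s_0^*K^{n+1}{}_0$ equals $\lambda\,\delta^k{}_l-\sum_j B_j{}^k A^j{}_l$. But the $\fg_{-1}\wedge\fg_{-1}$-component of $s_0^*K^{n+1}{}_0$ has homogeneity $0$ and hence vanishes by regularity, so $\sum_j B_j{}^k A^j{}_l=\lambda\,\delta^k{}_l$; in particular $B$ is determined by $A$ and $\lambda$. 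This is what makes a naive count balance: the genuinely free pointwise data in $s_0^*\omega$ is $(\lambda,A,c,d)$, of dimension $n^2+2n+1=\dim P-1$, while the conditions to be imposed are $\lambda\equiv 1$, $A^i{}_j\equiv\delta^i{}_j$, $c\equiv 0$, $d\equiv 0$, together with the single condition on $\omega^0{}_0$ --- that is, $\dim P$ conditions in all.

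I would then normalize in three steps. Using the $G_0$-part of $g$, which realises the $\fg_0$-module structure on the coframe components, one arranges $\lambda\equiv 1$ and $A^i{}_j\equiv\delta^i{}_j$, whence also $B_j{}^k\equiv\delta_j{}^k$ by the relation above; so $s^*\omega^{n+1}{}_0=\sigma$ and $s^*\omega^i{}_0\equiv\theta^i$, $s^*\omega^{n+1}{}_j\equiv\pi_j$ modulo $\sigma$. Next, a gauge transformation by $\exp(\xi)$ with $\xi\in\fg_1$ leaves the $\fg_{-2}$-component untouched and modifies the $\fg_{-1}$-components only through the term $-[\xi,\lambda\sigma\,E_{n+1}{}^0]$; since $[\,\cdot\,,E_{n+1}{}^0]\colon\fg_1\to\fg_{-1}$ is a linear isomorphism, this changes only the $\sigma$-parts $c^i,d_j$ of the $\fg_{-1}$-components (not the $\theta$- and $\pi$-leading parts, nor the $\fg_{-2}$-component), so $\xi$ can be chosen to achieve $s^*\omega^i{}_0=\theta^i$ and $s^*\omega^{n+1}{}_j=\pi_j$. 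Finally, a gauge transformation by $\exp(\xi)$ with $\xi\in\fg_2$ leaves all components of non-positive homogeneity unchanged, while (because $[\fg_2,\fg_{-2}]$ contains $E_0{}^0-E_{n+1}{}^{n+1}\in\fg_0$) it shifts the $\sigma$-coefficient of $\omega^0{}_0$ by an arbitrary function; choosing it to kill that coefficient gives $s^*\omega^0{}_0\equiv 0$ modulo $\{\theta^i,\pi_i\}$. The composition $s=s_0\cdot g$ of the three transformations is the desired section.

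I expect the only real obstacle to be isolating the relation $\sum_j B_j{}^k A^j{}_l=\lambda\,\delta^k{}_l$: without it the number of conditions to impose strictly exceeds the available gauge freedom, so it must be pulled out of regularity together with the normalization $d\sigma=\theta^i\wedge\pi_i$ before any constructive step can close. Once that relation is in hand, the remaining work --- checking that the three normalizations, performed in order of increasing homogeneity, do not undo one another --- is routine bookkeeping with the grading.
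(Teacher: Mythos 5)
Your proposal is correct and follows essentially the same route as the paper: start from an adapted coframe, use the vanishing of the homogeneity-zero ($\theta\wedge\pi$) part of $K^{n+1}{}_0$ forced by regularity to tie the leading matrices of $\omega^i{}_0$ and $\omega^{n+1}{}_i$ together, normalize with the $G_0$-part, then kill the $\sigma$-terms with $\fg_1$ and the $\sigma$-coefficient of $\omega^0{}_0$ with $\fg_2$. The only (immaterial) difference is ordering: the paper normalizes $e=1$, $g^j{}_i=\delta^j{}_i$ by $G_0$ first and then lets regularity force $h^i{}_j=\delta^i{}_j$, whereas you extract the regularity relation first and then normalize, obtaining the third matrix for free.
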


\begin{proof}
Consider a section $s\colon M\to \cG$. Since $\omega$ is regular, the negative 
part of $s^*\omega$ is an adapted coframe, i.e.
\begin{align*}
\omega^{n+1}{}_{0} = e \sigma, \quad
\omega^{n+1}{}_i = g^j{}_i \pi_j + g_i \sigma, \quad
\omega^i{}_0 = h^i{}_j \theta^j + h^i \sigma.
\end{align*}
An arbitrary section $\tilde{s}$ is given in terms of a function $h\colon M\to P$ such that $\tilde{s}=s\cdot 
h$. This satisfies:
\[ \tilde{s}^*\omega = h^{-1} \left(s^*\omega\right) h+ h^{-1} dh.\]
Since $h^{-1} dh$ term is $\fp$-valued, the negative part of 
$s^*\omega$ 
transforms via the adjoint action. 

Using the $G_0$-action, we can normalize $e=1$ and $g^j{}_i = \delta^j{}_i$. 
Since
 \[
 K^{n+1}{}_0 = \,d \omega^{n+1}{}_{0} + \omega^{n+1}{}_a \wedge \omega^a{}_0 
\equiv \,d \sigma  + \omega^{n+1}{}_i \wedge \omega^i{}_0
\equiv (-\pi_i + h^j{}_i \pi_j) \wedge \theta^i
\,\, \mod{\sigma},
 \]
 and regularity implies $K^{n+1}{}_0 \equiv 0 \,\,\mod{\sigma}$, then $h^i{}_j 
 =\delta^i{}_j$.
 Using the action of subgroup of $P$ corresponding to $\fg_1$, we can normalize $g_i = 0$, $h^i = 0$.
 Similarly, using the subgroup of $P$ corresponding to $\fg_2$, 
 we can normalize $\omega^0{}_0 \equiv 0 \,\, \mod \{ \theta^i, \pi_j \}$.
\end{proof}

 With respect to such a section, write
 \[
 \omega^a{}_b = r^a{}_{bi} \, \theta^i + s^a{}_b{}^i \, \pi_i + t^a{}_b \, \sigma.
 \]

To obtain the harmonic part of the normal curvature, it is sufficient to 
compute normalization conditions only in homogeneities 1 and 2. For any 
regular, normal parabolic geometry, the lowest homogeneity curvature component 
is harmonic \cite{CS2009}.  Thus, all curvature components in homogeneity $1$ 
must vanish except the coefficients of $\theta^j \wedge \theta^k$ in 
$K^{n+1}{}_i$, and this corresponds to the torsion of our semi-integrable 
structure.  Recalling that $\omega^{n+1}{}_{n+1} = -\omega^0{}_0 - \omega^i{}_i$ since 
$\omega$ is $\fsl_{n+2}$-valued, we have:
\begin{align*}
 K^{n+1}{}_{0} &= \,d \omega^{n+1}{}_{0} + \omega^{n+1}{}_a 
 \wedge \omega^a{}_0 \, = \omega^{n+1}{}_0 \wedge \omega^0{}_0 + 
 \omega^{n+1}{}_{n+1} \wedge \omega^{n+1}{}_0 = \sigma \wedge ( 2\omega^0{}_0 + 
 \omega^i{}_i)
\\
& = (2r^0{}_{0j}+ r^i{}_{ij}) \sigma \wedge \theta^j + (2s^0{}_0{}^j + 
s^i{}_i{}^j ) \sigma \wedge \pi_j
\\
 K^i{}_0
&=\,d \omega^i{}_0  + \omega^i{}_a \wedge \omega^a{}_0  \,\equiv \omega^i{}_0 
\wedge 
\omega^0{}_0 + \omega^i{}_j \wedge \omega^j{}_0 \quad\mod \sigma \\
& \equiv (r^i{}_{[jk]} + r^0{}_{0[j} \delta^i{}_{k]} ) \theta^k 
\wedge \theta^j +(s^i{}_j{}^k - s^0{}_0{}^k \delta^i{}_j)
\pi_k \wedge \theta^j  \quad\mod \sigma\\
K^{n+1}{}_i
&=\,d \omega^{n+1}{}_i + \omega^{n+1}{}_a \wedge \omega^a{}_i  \equiv d\pi_i + 
\pi_j 
\wedge \omega^j{}_i + \omega^{n+1}{}_{n+1} \wedge \pi_i \quad \mod \sigma\\
&\equiv \frac{d f_{ij}}{d x^k}\theta^j\wedge \theta^k + \left(r^j{}_{ik} + 
(r^0{}_{0k} + r^l{}_{lk}) \delta^j{}_i - \frac{\p f_{ik}}{\p p_j} \right) \pi_j 
\wedge \theta^k\\
& \qquad + \left(s^j{}_i{}^k + (s^0{}_0{}^k + s^l{}_l{}^k) \delta^j{}_i\right) 
\pi_j \wedge \pi_k  \quad\mod \sigma
\end{align*}
 We confirm that the coefficient of $\theta^j \wedge \theta^k$ in $K^{n+1}{}_i$ 
 is indeed the obstruction $\cD_k f_{ij} - \cD_j f_{ik}$ to integrability of 
 $E$. All remaining terms above are zero, so we get:
\begin{equation} \label{peq1} 
 s^j{}_i{}^k=0, \quad s^0{}_0{}^i=0,\quad 
 r^0{}_{0i}=-\frac{1}{n+2}\frac{\p f_{ij}}{\p p_j},\quad
 r^i{}_{jk}=\frac{\p f_{jk}}{\p  p_i}- \delta^i{}_j\frac{1}{n+2}\frac{\p 
 f_{lk}}{\p p_l} .
\end{equation}

 \newcommand\rem[1]{\underbrace{{#1}}_{{\color{red} \mbox{remove}}}}

Proceed now to homogeneity 2.  Using \eqref{peq1}, we compute:
 \begin{align} \label{peq5b}
 K^i{}_0 &=  s^i{}_{n+1}{}^j \pi_j\wedge \sigma +  
 \left(r^i{}_{n+1,j}-t^i{}_j\right)\theta^j\wedge \sigma 
 \\
 K^{n+1}{}_i &= \frac{d f_{ij}}{d x^k}\theta^j\wedge \theta^k + \left(\frac{\p 
 f_{ij}}{\p u} - r^0{}_{ij}\right)\theta^j\wedge\sigma 
 +  \left(t^j{}_i-s^0{}_i{}^j+\delta^j_i t^k{}_k\right)\pi_j\wedge\sigma
 \\
 K^0{}_0 &\equiv \left(\frac{d r^0{}_{0i}}{d {x^j}} + 
 r^0{}_{ij}\right)\theta^j\wedge\theta^i +  \left(\frac{\p r^0{}_{0i} }{\p p_j} 
 + s^0{}_i{}^j\right)\pi_j\wedge\theta^i \quad\mod \sigma
 \\
 K^i{}_j &\equiv \left(\frac{d r^i{}_{jl}}{d x^k} +\delta^i{}_k r^0{}_{jl} 
 +r^i{}_{pk} r^p{}_{jl} \right) \theta^k\wedge\theta^l
+ \left( \frac{\p r^i{}_{jl}}{\p p_k} -\delta^k{}_l t^i{}_j-\delta^k{}_j 
r^i{}_{n+1,l} -\delta^i{}_l s^0{}_j{}^k \right) \pi_k \wedge \theta^l 
\label{peq5}
\\
&\qquad+ s^i{}_{n+1}{}^k\pi_k\wedge\pi_j \quad\mod \sigma 
\end{align}

 To obtain the pullback $s^*\kappa : M \to \bigwedge^2 \fp_+ \otimes \fg$ of 
 the curvature function $\kappa : \cG \to \bigwedge^2 \fp_+ \otimes \fg$, we
 note that the framing provided by $\omega$ together with $P$-equivariancy of 
 $\kappa$ allows us to identify $\sigma = \omega^{n+1}{}_0,$ $\theta^i = 
 \omega^i{}_0$ and $\pi_i = \omega^{n+1}{}_i$ with $ (E_j{}^0)^* $, 
 $(E_{n+1}{}^{j})^*$ and $(E_0{}^{n+1})^*$ respectively. 
A form $B$ on $\fgl_{n+2}$ which is defined by $B(X,Y)=\tr(XY)$ and is 
proportional to the Killing form on $\fgl_{n+2}$ induces a $P$-module 
isomorphism $(\fg / \fp)^* \cong \fp_+$. This allows us to make the replacements
 \[
 \theta^j \leftrightarrow E_0{}^j, \quad
 \pi_j \leftrightarrow E_j{}^{n+1}, \quad
 \sigma \leftrightarrow E_0{}^{n+1}
 \]
 in the curvature 2-form $K$.  The homology differential $\partial^* : 
 \bigwedge^2 \fp_+ \otimes \fg \to \fp_+ \otimes \fg$ is defined on 
 decomposable elements as
 \[
 \partial^*(X \wedge Y \otimes v) = -Y \otimes [X,v] + X \otimes [Y,v] - [X,Y] \otimes v.
 \]
 
 We introduce a bi-grading on $\bigwedge^\bullet \fp_+ \otimes \fg$. Let $\fh 
 \subset \fgl_{n+2}$ be Cartan subalgebra for the standard upper-triangular 
 Borel subalgebra. Let also $Z_i\in \fh, 1 \le i \le n+1$ be a dual basis to 
 the  simple roots basis $\alpha_i\in \fh^*, 1 \le i \le n+1$. Then the pair 
 $(Z_1,Z_{n+1})$ induces bi-grading $X\to (a_1,a_{n+1})$ where $[Z_i,X]=a_i 
 X$ for $i=1,n+1.$ Homogeneity of an element $X$ is equal to $a_1+a_{n+1}$ 
 since $Z=Z_1+Z_{n+1}$ where $Z$ is a grading element. Moreover, since 
 $\partial^*$ is $P$-equivariant map it respects bi-grading.

 In order to compute harmonic curvature it is sufficient to use only 
 $\partial^* \kappa_{(1,1)}=0$ and $\partial^* \kappa_{(0,2)}=0$ normality 
 conditions. Using \eqref{peq5b}-\eqref{peq5} and $ K^{n+1}{}_{n+1} = -K^0{}_0 
 -K^i{}_i$ we compute:
 
 \begin{align*}
 0 = \partial^* \kappa_{(1,1)} =& 
 \left(r^i{}_{n+1,j}-t^i{}_j\right)
    \left( E_0{}^{n+1} \otimes (E_i{}^j - \delta_i{}^j E_0{}^0) -  E_0{}^j 
    \otimes E_i{}^{n+1} \right) 
    \\ 
    &+ \left(t^j{}_i-s^0{}_i{}^j+\delta^j{}_i t^k{}_k\right) \left( 
    -E_0{}^{n+1} \otimes (E_j{}^i - \delta_j{}^i E_{n+1}{}^{n+1}) + E_j{}^{n+1} 
    \otimes 
    E_0{}^i \right) 
    \\
    &+ \left(\frac{\p r^0{}_{0i} }{\p p_j} + 
    s^0{}_i{}^j\right)\left(-E_j{}^{n+1} 
    \otimes E_0{}^i + \delta^i{}_j E_0{}^{n+1} \otimes E_0{}^0 \right)
    \\
    &+\left( \frac{\p r^i{}_{jl}}{\p p_k} -\delta^k{}_l t^i{}_j-\delta^k{}_j 
    r^i{}_{n+1,l} -\delta^i{}_l s^0{}_j{}^k \right)
    \left(E_0{}^l \otimes \delta^j{}_k E_i{}^{n+1} + E_k{}^{n+1} \otimes 
    \delta^l{}_i E_0{}^j + \delta^l{}_k E_0{}^{n+1} \otimes E_i{}^j\right)
    \\
    &+\left( \frac{\p r^0{}_{0l}}{\p p_k}+\frac{\p r^i{}_{il}}{\p p_k}-\delta 
    ^k{}_l t^i{}_i-r^k_{n+1,l}\right) \left( E_0{}^l\otimes 
    E_k{}^{n+1}-\delta_k{}^l 
    E_0{}^{n+1}\otimes E_{n+1}{}^{n+1}\right)
    \\
    =& \left( \frac{\p r^j{}_{ki}}{\p p_k}+\frac{\p r^0{}_{0i}}{\p p_j}+
    \frac{\p r^k{}_{ki}}{\p p_j} - (n+2) r^j{}_{n+1,i} - \delta^j{}_i 
    (s^0{}_k{}^k+t^k{}_k)\right) E_0{}^i\otimes E_j{}^{n+1}
    \\
    &+ \left( \frac{\p r^i{}_{jk}}{\p p_k} - (n+2) t^i{}_j-\delta^i{}_j 
    t^k{}_k \right) E_0{}^{n+1}\otimes (E_i{}^j - \delta_i{}^jE_{n+1}{}^{n+1})
    \\
    &+ \left( \frac{\p r^0{}_{0i}}{\p p_i}+ t^i{}_i+s^0{}_i{}^i - r^i{}_{n+1,i}
    \right) E_0{}^{n+1} \otimes (E_0{}^0-E_{n+1}{}^{n+1})
    \\
    &+ \left( \frac{\p r^k{}_{ik}}{\p p_j}-\frac{\p r^0{}_{0i}}{\p p_j} 
    - (n+2) s^0{}_i{}^j+\delta^j{}_i(t^k{}_k-r^k{}_{n+1,k})\right) 
    E_j{}^{n+1}\otimes E_0{}^i,
 \\
 0 = \partial^* \kappa_{(0,2)} =& -s^i{}_{n+1}{}^j E_j{}^{n+1} \otimes 
 E_i{}^{n+1} + s^i{}_{n+1}{}^k (1-n) E_k{}^{n+1} \otimes E_i{}^{n+1} 
 -s^i{}_{n+1}{}^j E_j{}^{n+1} \otimes 
 E_i{}^{n+1}
 \\
 &+ s^i{}_{n+1}{}^j E_i{}^{n+1} \otimes 
 E_j{}^{n+1} = \left( s^j{}_{n+1}{}^i  -(n+1)s^i{}_{n+1}{}^j \right) 
 E_j{}^{n+1} \otimes 
 E_i{}^{n+1}.
 \end{align*}
 
Substituting  \eqref{peq1} we obtain linear system of equations on coefficients 
of normal regular Cartan connection:
\begin{align*}
0 &= \frac{\p^2 f_{ik}}{\p p_j\p p_k} - (n+2) r^j{}_{n+1,i} - \delta^j{}_i 
(s^0{}_k{}^k+t^k{}_k),
\\
0 &= \frac{\p^2 f_{jk}}{\p p_i\p p_k} - \delta^i{}_j\frac1{n+2}\frac{\p^2 
f_{lk}}{\p p_l \p p_k}  - (n+2) t^i{}_j-\delta^i{}_j, 
t^k{}_k 
\\
0 &= -\frac1{n+2}\frac{\p^2 f_{ij}}{\p p_i\p p_j} + t^i{}_i+s^0{}_i{}^i - 
r^i{}_{n+1,i},
\\
0 &= \frac{\p^2 f_{ik}}{\p p_j\p p_k} - (n+2) 
s^0{}_i{}^j+\delta^j{}_i(t^k{}_k-r^k{}_{n+1,k}).
\end{align*}

Solving the linear system of equations above we get the homogeneity 2 
coefficients of normal Cartan connection needed for the computation of 
$\kappa_H$:
\begin{align}
t^i{}_j&=\frac1{n+2}\frac{\p^2 f_{jk}}{\p p_i \p 
p_k}-\delta^i_j\frac1{(n+2)(n+1)}\frac{\p^2 f_{lk}}{\p p_l \p p_k}, \label{peq2}
\\
r^i{}_{n+1,j}&=t^i{}_j,  \label{peq3}
\\
s^0{}_i{}^j&=\frac1{n+2}\frac{\p^2 f_{ik}}{\p p_j \p p_k},  \label{peq4}
\\
s^i{}_{n+1}{}^j&=0.
\end{align}
From Kostant's theorem we know that $\bbW$ has the lowest weight vector $ 
\phi_0=E_0{}^1 \wedge E_n{}^{n+1} \otimes E_n{}^1$. The element $\phi_0$ 
belongs to the module $\bbV$ generated by:

 \[ w_l{}^k{}_i{}^j = E_0{}^k \wedge E_l{}^{n+1} \ot (E_i{}^j-\delta_i{}^j 
 E_{n+1}{}^{n+1}). \]
The module $\bbW$ is the submodule of $\bbV$ consisting of tensors that are trace-free in 
$(i,j)$, symmetric in $(l,i)$, and symmetric in $(k,j)$.
 We denote coefficients of $\kappa$ which corresponds to $w_l{}^k{}_i{}^j$ as 
$W^l{}_k{}^i{}_j$ and assume that 
\[ T^k{}_i{}^l{}_j= -\frac{\p^2 f_{ij}}{\p p_k \p p_l}.\]
 and $T^l{}_j=T^i{}_i{}^l{}_j$, $T=T^i{}_i{}^j{}_j$. According to \eqref{peq5} 
 \[ W^l{}_k{}^i{}_j = -\left( \frac{\p r^i{}_{jk}}{\p p_l} -\delta^l{}_k 
 t^i{}_j-\delta^l{}_j 
 r^i{}_{n+1,k} 
 -\delta^i{}_k r^l{}_{j,-1} \right). \]
 Using \eqref{peq1} and \eqref{peq2}-\eqref{peq4} we obtain that $W^l{}_k{}^i{}_j$ is equal to 
 trace-free part of tensor $T^l{}_k{}^i{}_j$:
\[ W^l{}_k{}^i{}_j=T^l{}_k{}^i{}_j-\frac1{n+2}\left( \delta^i{}_j T^l{}_k + 
\delta^l{}_k 
T^i{}_j + \delta^l{}_j T^i{}_k + \delta^i{}_k T^l{}_j 
\right)+\frac1{(n+2)(n+1)}\left( 
\delta^l{}_k\delta^i{}_j+\delta^l{}_j\delta^i{}_k\right)T.\]
Coefficients  $W^l{}_k{}^i{}_j$ are symmetric in $(i,l)$, $(j,k)$, trace-free in 
$(i,j)$ and form  the $\bbW$-component of $\kappa_H$. 

We summarize computations of this sub-section in the following theorem.
\begin{theorem}
	With respect to the section $s$ defined by Lemma \ref{lem1}, the $\bbW$ 
	component of the harmonic curvature of the regular, normal connection for 
	semi-integrable LCS given by \ref{E:EV} is
	\begin{equation}
	W^l{}_k{}^i{}_j E_0{}^k \wedge E_l{}^{n+1} \ot (E_i{}^j-\delta_i{}^j 
	E_{n+1}{}^{n+1}),
	\end{equation}
	where $W^l{}_k{}^i{}_j$ is the trace-free part of the tensor
	\[ T^l{}_k{}^i{}_j = -\frac{\p^2 f_{kj}}{\p p_l \p p_i}. \]
\end{theorem}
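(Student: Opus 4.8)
The plan is to assemble the computation carried out in this subsection into a proof: all the work has essentially been done, and the theorem simply records the outcome. First I would invoke Lemma~\ref{lem1} to fix the section $s\colon M\to\cG$ for which the negative part of $s^*\omega$ equals the adapted coframe $(\theta^i,\pi_i,\sigma)$ and $\omega^0{}_0\equiv 0 \mod \{\theta^i,\pi_i\}$. This pins down enough of the gauge that, writing $\omega^a{}_b = r^a{}_{bi}\theta^i + s^a{}_b{}^i\pi_i + t^a{}_b\sigma$, the residual coefficients of homogeneity $\le 2$ are completely determined by regularity together with the normality condition $\p^*\kappa = 0$.

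Next I would expand the curvature $K^a{}_b = d\omega^a{}_b + \omega^a{}_c\wedge\omega^c{}_b$ and sort everything by homogeneity with respect to the grading element. In homogeneity $1$, regularity forces every component to vanish except the $\theta\wedge\theta$ term of $K^{n+1}{}_i$ (which is the torsion $\tau_E$); setting the rest to zero gives \eqref{peq1}, expressing $r^0{}_{0i}$, $r^i{}_{jk}$, $s^j{}_i{}^k$, $s^0{}_0{}^i$ through first $p$-derivatives of the $f_{ij}$. In homogeneity $2$, I would compute $K^i{}_0$, $K^{n+1}{}_i$, $K^0{}_0$, $K^i{}_j$ modulo $\sigma$ as in \eqref{peq5b}--\eqref{peq5}, pass to the curvature function via the identifications $\theta^j\leftrightarrow E_0{}^j$, $\pi_j\leftrightarrow E_j{}^{n+1}$, $\sigma\leftrightarrow E_0{}^{n+1}$, and impose $\p^*\kappa=0$. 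Since $\p^*$ respects the bigrading induced by $(Z_1,Z_{n+1})$, only the $(1,1)$ and $(0,2)$ blocks of this equation bear on the homogeneity-$2$ coefficients; solving the resulting linear system yields \eqref{peq2}--\eqref{peq4}, i.e.\ the values of $t^i{}_j$, $r^i{}_{n+1,j}$, $s^0{}_i{}^j$, $s^i{}_{n+1}{}^j$.

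Finally I would identify the target. By Kostant's theorem the $\fg_0$-irrep $\bbW\subset H^2(\fg_-,\fg)$ has lowest weight vector $\phi_0 = E_0{}^1\wedge E_n{}^{n+1}\ot E_n{}^1$, so it sits inside the span $\bbV$ of the $w_l{}^k{}_i{}^j = E_0{}^k\wedge E_l{}^{n+1}\ot(E_i{}^j - \delta_i{}^j E_{n+1}{}^{n+1})$ and is cut out by trace-freeness in $(i,j)$ and symmetry in $(l,i)$ and in $(k,j)$. Because the lowest-homogeneity part of the curvature of a regular, normal geometry is automatically harmonic, the $\bbW$-component of $\kappa_H$ is precisely the coefficient $W^l{}_k{}^i{}_j$ of $w_l{}^k{}_i{}^j$ extracted from \eqref{peq5}; substituting \eqref{peq1} and \eqref{peq2}--\eqref{peq4} and then taking the $\fg_0$-irreducible projection (trace-free in $(i,j)$, symmetrized in $(l,i)$ and $(k,j)$) collapses the bracketed expression to the trace-free part of $T^l{}_k{}^i{}_j = -\p^2 f_{kj}/\p p_l\,\p p_i$, which is the assertion.

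The main obstacle is bookkeeping rather than conceptual. One must check that the partially normalized section of Lemma~\ref{lem1} leaves exactly the right residual freedom so that regularity plus the $(1,1)$- and $(0,2)$-normality conditions determine all homogeneity-$\le 2$ coefficients uniquely, and that in passing from the raw coefficient in \eqref{peq5} to the $\bbW$-component the apparent dependence on $t^i{}_j$, $r^i{}_{n+1,k}$ and $r^l{}_{j,-1}$ cancels against the trace corrections, leaving only the Hessian of $f$. Verifying this cancellation --- that the trace-free part of the bracket in \eqref{peq5} coincides with the trace-free part of $-\p^2 f_{kj}/\p p_l\,\p p_i$ --- is the one genuine computation; everything else is setup.
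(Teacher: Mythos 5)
Your proposal is correct and follows essentially the same route as the paper: fix the section from Lemma \ref{lem1}, use regularity to pin down the homogeneity-1 coefficients \eqref{peq1}, impose the $(1,1)$- and $(0,2)$-normality conditions $\partial^*\kappa=0$ to solve for the homogeneity-2 coefficients \eqref{peq2}--\eqref{peq4}, and read off the coefficient of $w_l{}^k{}_i{}^j$ from \eqref{peq5}, whose trace-free projection is $-\partial^2 f_{kj}/\partial p_l\,\partial p_i$ up to trace terms. The only remaining work is the bookkeeping cancellation you already flag, which is exactly the computation the paper carries out.
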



 \section{ILC structures in dimension five}\label{S:five}
 
 Henceforth, we specialize to the $n=2$ ILC case, which corresponds to {\em compatible} PDE systems
 \[
 u_{11} = F, \quad u_{12} = G, \quad u_{22} = H,
 \]
 where $F,G,H$ are functions of $(x,y,u,p,q)$ with $p=u_1$ and $q=u_2$.  Equivalently, $E$ and $V$ as in \eqref{E:EV} (with $f_{11} = F$, $f_{12} = G$, $f_{22} = H$) are both integrable.  

 Let us fix notation for $\fp$.  Take the standard (upper triangular) Borel subalgebra, diagonal Cartan subalgebra $\fh \subset \fsl_4$, and simple roots $\alpha_i = \epsilon_i - \epsilon_{i+1} \in \fh^*$ for $i=1,2,3$.  The dual basis $Z_1,Z_2,Z_3 \in \fh$ to the simple roots is given by
 \begin{align*}
 Z_1 = \diag\left(\frac{3}{4}, -\frac{1}{4}, -\frac{1}{4}, -\frac{1}{4}\right), \quad 
 Z_2 = \diag\left(\frac{1}{2}, \frac{1}{2}, -\frac{1}{2}, -\frac{1}{2}\right), \quad
 Z_3 = \diag\left(\frac{1}{4}, \frac{1}{4}, \frac{1}{4}, -\frac{3}{4}\right).
 \end{align*}
 The grading element adapted to $P$ is $Z:= Z_1 + Z_3$.
 Use linear coordinates on $\fp$:
 \begin{align}
 \bmat{ \frac{3z_1 + z_2}{4} & t_1 & t_2 & t_5\\ 0 & v_1 + \frac{z_2 - z_1}{4} & v_2 & t_3\\ 0 & v_3 & -v_1 + \frac{z_2 - z_1}{4} & t_4\\ 0 & 0 & 0 & -\frac{z_1 + 3z_2}{4}} \in \fp.
 \label{E:p-coords}
 \end{align}
 We have $\fg_0 = \cZ(\fg_0) \times (\fg_0)_{ss} \cong \bbC^2 \times \fsl_2$, where $\bbC^2 = \tspan\{ Z_1,Z_3 \}$.  In terms of the standard basis $\{ E_a{}^b \}_{0 \leq a,b \leq 3}$ of $\fgl_4$, a standard $\fsl_2$-triple spanning the semisimple part $(\fg_0)_{ss} \subset \fp$ is given by:
 \begin{align} \label{E:HXY}
 	\sfH := E_1{}^1 - E_2{}^2, \qquad 
	\sfX := E_1{}^2, \qquad
	\sfY := E_2{}^1. 
 \end{align}

 For ILC structures, $\kappa_H$ takes values in\footnote{In terms of $\fsl_4$ weights $\{ \lambda_i \}$, $\bbW$ has {\em lowest} weight $3\lambda_1 - 4\lambda_2 + 3\lambda_3 = \alpha_1 - \alpha_2 + \alpha_3$ by the ``minus lowest weight'' convention \cite{BE1989}.} the module $\bbW = \Axox{-3,4,-3}$ (in the notation of \cite{BE1989}).
 With respect to $(Z_1,Z_3)$, $\bbW$ has bi-grading $(+1,+1)$ so that its homogeneity is $+2$. As $\fsl_2$-modules, $\bbW \cong \bigodot^4(\bbC^2)$, i.e. the space of binary quartics in $\sfr,\sfs$, say. 
Hence, $\kappa_H$ (up to sign) is:
 \begin{align} \label{E:KH2}
 \kappa_H = F_{qq} \sfr^4 + 2 (F_{pq} - G_{qq}) \sfr^3 \sfs + (F_{pp} - 4 G_{pq} + H_{qq}) \sfr^2 \sfs^2 + 2( H_{pq} - G_{pp}) \sfr\sfs^3 + H_{pp} \sfs^4.
 \end{align}
 Strictly speaking, this is the pullback of $\kappa_H$ by a (local) section $s \colon M \to \cG$.  Since $P_+$ acts vertically trivially on $\ker(\partial^*) / \im(\partial^*)$, \eqref{E:KH2} is canonically defined only up to a $G_0$-transformation.
 
 \subsection{Petrov classification}
\label{SS:Petrov} As in the Petrov classification of the Weyl tensor in 4-dimensional Lorentzian (conformal) geometry and the classification of $(2,3,5)$-distributions \cite{Cartan1910}, ILC structures can be classified based on the (pointwise) root type of the binary quartic field \eqref{E:KH2}.  We use the same notation for types as in the Petrov classification, e.g. type N and D indicate a single quadruple root and a pair of double roots respectively.
 
Any ILC structure admits at most a 15-dimensional symmetry algebra and 15 is realized only on (an open subset of) the flat model (up to local isomorphism).  Among (regular, normal) parabolic geometries $(\cG \to M, \omega)$ of a given type $(G,P)$, Kruglikov and The~\cite{KT2013} gave a general method for finding the submaximal symmetry dimension, i.e.\ the symmetry dimension for any non-flat structure, and for ILC structures this dimension is eight. These techniques can also be used to determine the maximal symmetry dimension for ILC structures with constant root type.  We briefly outline their method.  A non-trivial root type corresponds to a $G_0$-orbit $\{ 0 \} \neq \cO \subset \bbW$ (or in type I, a collection of $G_0$-orbits).  Defining $\fa^\phi = \fg_- \op \fann(\phi)$ for non-flat ILC structures, we have:
 \begin{align} \label{E:KT-bound}
 \dim(\finf(\cG,\omega)) \leq \max\{ \dim(\fa^\phi) : \phi \in \cO \} = 5 + \max\{ \dim(\fann(\phi)) : \phi \in \cO \}.
 \end{align}
 Since $\dim(\fann(\phi))$ is constant along $G_0$-orbits, it suffices to evaluate it on a cross-section.
  
 \begin{theorem} \label{T:Petrov-sym}
 Among ILC structures with constant root type, we have:\\
 \begin{center}
 \begin{tabular}{|c|c|c|c|c|c|c|} \hline
 Root type & O & N & D & III & II & I\\ \hline\hline
 Max. sym. dim. & 15 & 8 & 7 & 6 & 5 & 5\\ \hline
 Sharp? & \checkmark & \checkmark & \checkmark & \checkmark & \checkmark & \checkmark \\ \hline
 \end{tabular}
 \end{center}
 \end{theorem}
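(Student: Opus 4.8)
The plan is to obtain the upper bounds from the Kruglikov--The inequality \eqref{E:KT-bound} together with a Tanaka-prolongation check, and sharpness from explicit PDE models, closing the residual gap for the most degenerate root types by invoking the classification developed in Sections~\ref{S:five}--\ref{S:Cartan}.

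Type~O is immediate: any ILC structure with $\geq 15$ infinitesimal symmetries is an open subset of the flat model, which has vanishing harmonic curvature (type~O), so $15$ is attained and cannot be exceeded. For the five non-flat types I would run \eqref{E:KT-bound}. Pick a cross-sectional representative of the relevant $G_0$-orbit in $\bbW\cong\bigodot^4(\bbC^2)$: say $\sfr^4$ for N, $\sfr^2\sfs^2$ for D, $\sfr^3\sfs$ for III, $\sfr^2\sfs(\sfr-\sfs)$ for II, and a quartic with four distinct roots for I. Since $Z_1-Z_3\in\cZ(\fg_0)$ acts trivially on $\bbW$ while $Z=Z_1+Z_3$ acts by the homogeneity $+2$, computing $\fann_{\fg_0}(\phi)$ reduces to finding the $X_0\in(\fg_0)_{ss}\cong\fsl_2$ with $X_0\cdot\phi\in\bbC\phi$ on binary quartics in $\sfr,\sfs$ --- elementary $\fsl_2$-theory, giving $\dim\fann_{\fg_0}(\phi)=3,2,2,1,1$ for N, D, III, II, I respectively. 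A routine check that the Tanaka prolongation $\mathfrak{pr}(\fg_-,\fann_{\fg_0}(\phi))$ vanishes in positive degrees then yields $\dim\finf\leq 8,7,7,6,6$, and, since the same argument bounds the isotropy at every point, $\dim\finf_x\leq 3,2,2,1,1$ there.

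These bounds are sharp already for N and D: the submaximal model N.8 ($u_{11}=q^2$, $u_{12}=u_{22}=0$, so $\kappa_H\propto\sfr^4$) realizes $8$, and D.7 ($u_{11}=p^2$, $u_{12}=0$, $u_{22}=\lambda q^2$ with $\lambda\neq -1$, so $\kappa_H\propto\sfr^2\sfs^2$) realizes $7$, both of constant root type. For types III, II, I the numbers $7,6,6$ must be improved to $6,5,5$. For II and I: were $\dim\finf=6$, then $\dim\finf_x\leq 1$ at every $x$ together with $\dim\finf=\dim(\finf\cdot x)+\dim\finf_x$ would force every orbit to be open, so the structure would be multiply transitive --- impossible by Section~\ref{S:five} --- hence $\dim\finf\leq 5$, and $5$ is attained by a simply transitive (left-invariant) ILC structure of constant type~II, resp.\ type~I, which I would write down explicitly and for which one checks via \eqref{E:KH2} that the invariant quartic has the prescribed root pattern. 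For III: if $\dim\finf\geq 7$ the analogous orbit argument (now with $\dim\finf_x\leq 2$) again forces multiple transitivity, so the structure would be one of the entries of Table~\ref{F:ILC-classify} or its dual; but every entry of symmetry dimension $7$ or $8$ is of type N or D, and duality preserves root type, a contradiction --- hence $\dim\finf\leq 6$, attained by III.6-1 ($u_{11}=\frac{p}{x-q}$, $u_{12}=u_{22}=0$, whose quartic $\frac{2}{(x-q)^2}\sfr^3\bigl(\frac{p}{x-q}\sfr+\sfs\bigr)$ has a triple root and a simple root at every point).

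The $\fann$ and prolongation computations are routine linear algebra. The genuine content is carried by the two asymmetric inputs that break the symmetry among the types: (i) the completeness of the multiply transitive classification of Sections~\ref{S:five}--\ref{S:Cartan}, which is precisely what pins type~III at $6$ rather than $7$ and simultaneously certifies that no model of dimension $\geq 7$ is of type~III; and (ii) the explicit construction of transitive type~II and type~I examples --- these do not emerge from the Cartan reduction, which by design only produces multiply transitive geometries, so they must be exhibited and verified by hand, and this is where the sharpness of the last two columns really lives. Accordingly I expect (i) to be the main obstacle, although it is in any case the backbone of the paper.
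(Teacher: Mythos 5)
Your upper-bound setup agrees with the paper: the Kruglikov--The estimate \eqref{E:KT-bound} with annihilator dimensions $3,2,2,1,1$ for N, D, III, II, I gives $8,7,7,6,6$, and sharpness for O, N, D via the flat model, N.8 and D.7 is exactly what the paper does. The genuine gap is in how you improve $7\to 6$ for type III and $6\to 5$ for types II and I. You argue that attaining the larger value would force multiple transitivity and then appeal to ``the classification developed in Sections \ref{S:five}--\ref{S:Cartan},'' i.e.\ to the completeness of Table \ref{F:ILC-classify}. But that classification is logically downstream of Theorem \ref{T:Petrov-sym}: types II and I are excluded from the Cartan reduction at the outset precisely because this theorem says they admit no multiply transitive models, and in the type III reduction the $7$-symmetry branch is dismissed by citing this theorem. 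So your argument is circular; it would only become legitimate if you independently re-ran the Cartan reduction for a hypothetical $7$-symmetry type III structure and for multiply transitive types II and I, which you do not propose to do. The paper instead closes the gap by a self-contained algebraic argument: by prolongation-rigidity \cite[Cor.3.4.8]{KT2013}, a symmetry algebra attaining the bound would be a filtered deformation of $\fg_{-2}\oplus\fg_{-1}\oplus\fa_0$; for type III the Jacobi identity forces all deformation parameters to vanish, and for types II and I the resulting $4$-parameter family of deformations is shown to embed into $\fsl_4$, so the corresponding structures are flat, contradicting the assumed root type. This is the step your plan is missing.

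A second, smaller gap concerns sharpness of the last two columns: you only promise to ``write down explicitly'' transitive type II and type I models with $5$ symmetries. These examples do not fall out of any of the machinery in the paper (the Cartan reduction by design only produces multiply transitive geometries), and they are not trivial to produce: the paper exhibits the type I model $u_{11}=6S^{5/3}-6uS^{4/3}+2(u^2-q)S-2pq$, $u_{12}=3S^{4/3}-2uS-q^2$, $u_{22}=2S$ with $S=p+uq$ and symmetry $\mathfrak{sl}_2\ltimes\bbC^2$, and the type II family $u_{11}=p^{\lambda+2}q^{\mu}$, $u_{12}=p^{\lambda+1}q^{\mu+1}$, $u_{22}=p^{\lambda}q^{\mu+2}$, verifying the root type of $\kappa_H$ in each case. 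Without such explicit constructions the entries $5$ for II and I are only upper bounds. (Your isotropy estimate $\dim\finf_x\le\dim\fann(\phi)$ at every point also tacitly uses prolongation-rigidity and the nonvanishing of $\kappa_H$ at that point; constant non-O root type does supply the latter, but it should be said.)
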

 
 \begin{proof} See Table \ref{F:ILC-classify} for type N, D, III models with the stated symmetry dimensions.  

A Type I model with 5-dimensional symmetry is given by:
\[
u_{11} = 6S^{5/3}-6uS^{4/3}+2(u^2-q)S-2pq,\quad u_{12}=3S^{4/3} -2uS-q^2,\quad u_{22}=2S,
\]
where $S=p+uq$. Its harmonic curvature is given by the quartic:
\[
\kappa_H = -\frac{4}{3} \sfr(u\sfr+\sfs)(\sfr-(u\sfr+\sfs)S^{-1/3})(3\sfr - 2(u\sfr+\sfs)S^{-1/3}),
\]
which has four distinct roots on the open set $\{S\ne 0\}$. The equation is invariant with respect to the action of $\mathfrak{sl}_2\ltimes \bbC^2$ generated by:
\begin{multline*}
\partial_x,\quad \partial_y,\quad x\partial_y+\partial_u-q\partial_p, 
\quad 2x\partial_x + y\partial_y - u\partial_u-3p\partial_p-2q\partial_q, \\
\quad x^2\partial_x+xy\partial_y+(y-xu)\partial_u-(u+3xp+yq)\partial_p+(1-2xq)\partial_q. 
\end{multline*}

Next, consider
\[
u_{11} = p^{\lambda+2}q^{\mu}, \quad
u_{12} = p^{\lambda+1}q^{\mu+1}, \quad
u_{22} = p^{\lambda}q^{\mu+2},
\]
which is type II when $\lambda,\mu \ne 0$, $\lambda+\mu\ne 0, 1$ according to
\[
\kappa_H = p^{\lambda-2}q^{\mu-2}(p\sfr-q\sfs)^2(\mu(\mu-1)p^2\,\sfr^2+2\lambda\mu pq\,\sfr\sfs +\lambda(\lambda-1)q^2\,\sfs^2).
\] 
The symmetry algebra is generated by the 5 vector fields:
\[
\partial_x, \quad \partial_y, \quad \partial_u,\quad  
-\mu (x\partial_x - p\partial_p)+ \lambda (y\partial_y - q\partial_q),\quad
(1+\lambda+\mu)(x\partial_x+y\partial_y)+(\lambda+\mu)u\partial_u-p\partial_p-q\partial_q.
\]

 Now we establish upper bounds. Up to scale, representative elements in the $G_0$-orbits are
 \[
 \mbox{N:} \,\, \sfs^4; \quad
 \mbox{D:} \,\, \sfr^2 \sfs^2;\quad
 \mbox{III:} \,\, \sfr\sfs^3;\quad
 \mbox{II:} \,\, \sfr^2 \sfs (\sfr-\sfs); \quad
 \mbox{I:} \,\, \sfr\sfs(\sfr-\sfs)(\sfr-c\sfs), \quad c\in\bbC \backslash \{ 0,1 \}.
 \]
 The annihilators of the above elements, cf. \eqref{E:HXY}, are spanned by:
 \[
 \mbox{N:} \,\, Z_1 - Z_3, \, Y, \, H + 4 Z_1; \quad
 \mbox{D:} \,\, Z_1 - Z_3,\, H;\quad
 \mbox{III:} \,\, Z_1 - Z_3,\, H + 2 Z_1;\quad
 \mbox{II, I:} \,\, Z_1 - Z_3.
 \]
 By \eqref{E:KT-bound}, the result is proved for N and D, while for III, II, I the upper bound is one more than in the stated result.  For the latter, we show that the upper bound is never realizable.
 
 Consider the type III orbit and assume there is a model with 7-dimensional symmetry algebra $\fs$. According to~\cite[Cor.3.4.8]{KT2013} (in particular, ILC structures are ``prolongation-rigid''), $\fs$ admits a natural filtration $\fs=\fs^{(-2)}\supset \fs^{(-1)}\supset \fs^{(0)}$ with associated-graded Lie algebra isomorphic to $\fg_{-2} \oplus \fg_{-1} \oplus \fa_0$, where $\fa_0$ is the above annihilator of the type III orbit and $\fg_{-1}, \fg_{-2}$ are graded subspaces of $\fg=\fsl_4$. In other words, $\fs$ is a \emph{filtered deformation} of the above 7-dimensional graded Lie algebra. 

Any such deformation is necessarily invariant with respect to $\fs^{(0)}=\fa_0$. Fix a basis $T_1=Z_1-Z_3, T_2 =H+2Z_1$ in $\fa_0$. Its action on $\fg_{-1}$ and $\fg_{-2}$ diagonalizes with pairs of eigenvalues $(-1,-1)$, $(-1,-3)$, $(1,-1)$, $(1,1)$ and $(0,-2)$ respectively. Denote by 
$E_1 = E_1{}^0$, $E_2=E_2{}^0$, $F_1=E_3{}^1$, $F_2=E_3{}^2$, $U=-E_3{}^0$ 
the corresponding eigenvectors of this action. Then all possible deformations of $\fa_0\oplus \fg_{-1}\oplus \fg_{-2}$ preserving the filtration and the action of $\fa_0$ have the form:
\begin{align*}
& [T_1, E_1] = -E_1, \quad [T_1, E_2] = -E_2, \quad [T_1, F_1]=F_1, \quad [T_1,F_2]=F_2, \\
& [T_2, E_1] = -E_1, \quad [T_2, E_2] = -3E_2, \quad [T_2, F_1]=-F_1, \quad [T_2,F_2]=F_2, \quad [T_2,U]=-2U,\\
& [E_1, F_1] = U, \quad [E_1,F_2]=aT_1+bT_2, \quad [E_2,F_2]=U, \quad [F_2, U]=cF_1.
\end{align*}
However, due to Jacobi identity we get $a=b=c=0$. Thus, there are no non-trivial deformations in Type III case, and dimension $7$ of symmetry algebra is not realized.

Similarly, for types I and II we have the one-dimensional annihilator $\fa_0$ spanned by $T=Z_1-Z_3$. Using the same argument, we get a 4-parameter family of non-trivial deformations $\fs$ given by:
\begin{align*}
& [T, E_1] = -E_1, \quad [T, E_2] = -E_2, \quad [T, F_1]=F_1, \quad [T, F_2]=F_2,\\
& [E_1,F_1] = U + a_{11}T, \quad [E_1,F_2]=a_{12}T, \quad [E_2,F_1]=a_{21}T, \quad [E_2,F_2] = U+a_{22}T,\\
& [E_1, U] = -a_{22}E_1+a_{12}E_2, \quad [E_2, U] = a_{21}E_1-a_{11}E_2, \\
& [F_1, U] = a_{22}F_1 - a_{21} F_2,\quad [F_2, U] = -a_{12}F_1+a_{11}F_2.
\end{align*}
Replacing $U$ by $U + \lambda T$, we may assume that $a_{22} = -a_{11}$.  Each of these deformations $\fs$ defines an $S$-invariant ILC structure on the homogeneous space $S/S_0$, where $S$ is the corresponding Lie group and $S_0$ is the subgroup corresponding to the 1-dimensional subalgebra spanned by $T$. 
The linear map  $\alpha : \fs \to \fsl_4$ given by
\begin{align*}
 E_1 &\mapsto E_1{}^0 - \frac{1}{2} a_{11} E_1{}^3 - \frac{1}{2} a_{12} E_2{}^3, \qquad
 E_2 \mapsto E_2{}^0 - \frac{1}{2} a_{21} E_1{}^3 + \frac{1}{2} a_{11} E_2{}^3,\\
 F_1 &\mapsto E_0{}^1 - \frac{1}{2} a_{11} E_0{}^1 - \frac{1}{2} a_{21} E_0{}^2, \qquad
 F_2 \mapsto E_0{}^2 - \frac{1}{2} a_{12} E_0{}^1 + \frac{1}{2} a_{11} E_0{}^2,\\
 U &\mapsto -E_3{}^0 - \frac{1}{2} (a_{11} E_1{}^1 + a_{21} E_1{}^2 + a_{12} E_2{}^1 - a_{11} E_2{}^2) - \frac{1}{4} (a_{11}^2 + a_{12} a_{21}) E_0{}^3
\end{align*}
is in fact a Lie algebra homomorphism.  Hence, all these deformations are in fact trivial and yield the flat ILC structure \cite[Sec.1.5.15-16]{CS2009}.  This contradicts the type I or II assumption.
\end{proof}

 We exclude types II and I from further consideration, since no multiply transitive models exist.
 
 \subsection{Curvature module}
 
 Since ILC structures are torsion-free geometries, a result of \v{C}ap \cite[Sec.\ 3.2 corollary]{Cap2005} implies that the curvature function $\kappa$ takes values in the $P$-module $\bbK \subset \bigwedge^2 \fp_+ \ot \fp$ generated by $\bbW$.  We refer to $\bbK$ as the {\em curvature module}.
 
 From Kostant's theorem, $\bbW$ has lowest weight vector $\phi_0 = E_0{}^1 \wedge E_2{}^3 \otimes E_2{}^1 \leftrightarrow \sfs^4$, and we generate all of $\bbW$ by applying the raising operator $E_1{}^2 \leftrightarrow \sfr\partial_\sfs$. The result of applying the raising operators  $E_0{}^1, E_0{}^2, E_1{}^3, E_2{}^3, E_0{}^3 \in \fp_+$ to $\bbW$ is given in Table \ref{F:LC-curvature}.  Introduce coordinates on $\bbK$ (26-dimensional):
 \begin{align*}
 \begin{array}{|c|l@{\,}l|} \hline
 \mbox{Quartic}
 & A &= A_1 \sfr^4 + 4 A_2 \sfr^3 \sfs + 6 A_3 \sfr^2 \sfs^2 + 4 A_4 \sfr\sfs^3 + A_5 \sfs^4\\ \hline
 \mbox{Cubic}
 & B &= 4(B_1 \sfr^3 + 3 B_2 \sfr^2 \sfs + 3 B_3 \sfr\sfs^2 + B_4 \sfs^3)\sfw_1\\
 & B' &= 4(B_5 \sfr^3 + 3 B_6 \sfr^2 \sfs + 3 B_7 \sfr\sfs^2 + B_8 \sfs^3)\sfw_2\\ \hline
 \mbox{Quadratic}
 & C &= 6(C_1 \sfr^2 + 2C_2 \sfr\sfs + C_3 \sfs^2)\sfw_1^2\\
 & \widehat{C} &= 12(C_4 \sfr^2 + 2 C_5 \sfr\sfs + C_6 \sfs^2)\sfw_1\sfw_2\\
 & C' &= 6(C_7 \sfr^2 + 2C_8 \sfr\sfs + C_9 \sfs^2)\sfw_2^2\\ \hline
 \mbox{Linear}
 & D &= 12(D_1 \sfr + D_2 \sfs)\sfw_1^2 \sfw_2\\
 & D' &= 12(D_3 \sfr + D_4 \sfs)\sfw_1 \sfw_2^2\\ \hline
 \end{array}
 \end{align*}

 \begin{footnotesize}
 \begin{table}[h]
 \[
 \begin{array}{|c|c|l|} \hline
 \mbox{$(Z_1,Z_3)$-grade} & \mbox{Label} & \mbox{2-chain}\\ \hline\hline
 (+1,+1)
 & \sfr^4 & E_0{}^2 \wedge E_1{}^3 \otimes E_1{}^2 \\ 
 & 4\sfr^3 \sfs & -\Omega \otimes E_1{}^2 - E_0{}^2 \wedge E_1{}^3 \otimes \sfH\\ 
 & 6\sfr^2 \sfs^2 & 
 -E_0{}^1 \wedge E_2{}^3 \otimes E_1{}^2 - E_0{}^2 \wedge E_1{}^3 \otimes E_2{}^1 + \Omega \otimes \sfH\\
 & 4\sfr\sfs^3 & 
 \Omega \otimes E_2{}^1 + E_0{}^1 \wedge E_2{}^3 \otimes \sfH\\
 & \sfs^4 & E_0{}^1 \wedge E_2{}^3 \otimes E_2{}^1\\  \hline
 (+2,+1) & 4\sfr^3 \sfw_1 & E_0{}^2 \wedge E_0{}^3 \otimes E_1{}^2 + E_0{}^2 \wedge E_1{}^3 \otimes E_0{}^2\\
 & 12 \sfr^2 \sfs \sfw_1 & -E_0{}^1 \wedge E_0{}^3 \otimes E_1{}^2 - E_0{}^2 \wedge E_1{}^3 \otimes E_0{}^1 - \Omega \otimes E_0{}^2 - E_0{}^2 \wedge E_0{}^3 \otimes \sfH \\
 & 12 \sfr\sfs^2 \sfw_1 & 
 -E_0{}^2 \wedge E_0{}^3 \otimes E_2{}^1 - E_0{}^1 \wedge E_2{}^3 \otimes E_0{}^2 + \Omega \otimes E_0{}^1 + E_0{}^1 \wedge E_0{}^3 \otimes \sfH\\
 & 4\sfs^3 \sfw_1 & E_0{}^1 \wedge E_0{}^3 \otimes E_2{}^1 + E_0{}^1 \wedge E_2{}^3 \otimes E_0{}^1\\ \hline
 (+1,+2) & 4\sfr^3 \sfw_2 & E_1{}^3 \wedge E_0{}^3 \otimes E_1{}^2 + E_1{}^3 \wedge E_0{}^2 \otimes E_1{}^3 \\
 & 12\sfr^2\sfs \sfw_2 & E_2{}^3 \wedge E_0{}^3 \otimes E_1{}^2 + E_1{}^3 \wedge E_0{}^2 \otimes E_2{}^3 + \Omega \otimes E_1{}^3 - E_1{}^3 \wedge E_0{}^3 \otimes \sfH \\
 & 12\sfr\sfs^2 \sfw_2 & -E_1{}^3 \wedge E_0{}^3 \otimes E_2{}^1 - E_2{}^3 \wedge E_0{}^1 \otimes E_1{}^3 + \Omega \otimes E_2{}^3 - E_2{}^3 \wedge E_0{}^3 \otimes \sfH\\
 & 4\sfs^3 \sfw_2 & -E_2{}^3 \wedge E_0{}^3 \otimes E_2{}^1 - E_2{}^3 \wedge E_0{}^1 \otimes E_2{}^3 \\ \hline
 (+3,+1) 
 & 6\sfr^2 \sfw_1^2 & E_0{}^2 \wedge E_0{}^3 \otimes E_0{}^2\\
 & 12\sfr\sfs \sfw_1^2 & E_0{}^3 \wedge E_0{}^2 \otimes E_0{}^1 + E_0{}^3 \wedge E_0{}^1 \otimes E_0{}^2\\
 & 6\sfs^2 \sfw_1^2 & E_0{}^1 \wedge E_0{}^3 \otimes E_0{}^1\\ \hline
 (+2,+2) 
 & 12 \sfr^2 \sfw_1 \sfw_2 & E_1{}^3 \wedge E_0{}^3 \otimes E_0{}^2 + E_0{}^3 \wedge E_0{}^2 \otimes E_1{}^3 + E_1{}^3 \wedge E_0{}^2 \otimes E_0{}^3\\
 & 24\sfr\sfs \sfw_1 \sfw_2 & E_0{}^3 \wedge E_0{}^2 \otimes E_2{}^3 - E_0{}^3 \wedge E_2{}^3 \otimes E_0{}^2 + \Omega \otimes E_0{}^3 \\
 &&\qquad + E_0{}^3 \wedge E_1{}^3 \otimes E_0{}^1 - E_0{}^3 \wedge E_0{}^1 \otimes E_1{}^3\\
 & 12\sfs^2 \sfw_1 \sfw_2 & 
 E_0{}^1 \wedge E_0{}^3 \otimes E_2{}^3 + E_0{}^3 \wedge E_2{}^3 \otimes E_0{}^1 + E_0{}^1 \wedge E_2{}^3 \otimes E_0{}^3\\ \hline
 (+1,+3) 
 & 6\sfr^2 \sfw_2^2 & E_0{}^3 \wedge E_1{}^3 \otimes E_1{}^3\\
 & 12\sfr\sfs \sfw_2^2 & E_0{}^3 \wedge E_1{}^3 \otimes E_2{}^3 + E_0{}^3 \wedge E_2{}^3 \otimes E_1{}^3\\
 & 6\sfs^2 \sfw_2^2 & E_0{}^3 \wedge E_2{}^3 \otimes E_2{}^3 \\ \hline
 (+3,+2) 
 & 12 \sfr \sfw_1^2 \sfw_2 & E_0{}^3 \wedge E_0{}^2 \otimes E_0{}^3\\
 & 12 \sfs \sfw_1^2 \sfw_2 & E_0{}^1 \wedge E_0{}^3 \otimes E_0{}^3\\ \hline
 (+2,+3)
 & 12 \sfr \sfw_1 \sfw_2^2 & E_0{}^3 \wedge E_1{}^3 \otimes E_0{}^3 \\
 & 12 \sfs \sfw_1 \sfw_2^2 & E_0{}^3 \wedge E_2{}^3 \otimes E_0{}^3 \\ \hline\hline
 \multicolumn{2}{|c}{\mbox{Notation:}} & \Omega := E_0{}^1 \wedge E_1{}^3 - E_0{}^2 \wedge E_2{}^3\\
 \multicolumn{2}{|l}{\mbox{$\fp$-module description:}} & \mbox{Degree 4 polynomials in $\sfr,\sfs,\sfw_1,\sfw_2$ modulo $\sfw_1^3,\sfw_2^3,\sfw_1^3 \sfw_2, \sfw_1^2 \sfw_2^2, \sfw_1 \sfw_2^3$}\\ 
  \multicolumn{2}{|c}{\mbox{$\fsl_2$-action:}} & \sfH = E_1{}^1 - E_2{}^2 \leftrightarrow \sfr\partial_\sfr - \sfs\partial_\sfs, \qquad
	\sfX = E_1{}^2 \leftrightarrow \sfr\partial_\sfs, \qquad
	\sfY = E_2{}^1 \leftrightarrow \sfs\partial_\sfr\\
 \multicolumn{2}{|c}{\mbox{$\fp_+$-action:}} &
 E_0{}^1 \leftrightarrow \sfw_1\partial_\sfr, \qquad 
 E_0{}^2 \leftrightarrow \sfw_1\partial_\sfs, \qquad
 E_1{}^3 \leftrightarrow -\sfw_2\partial_\sfs, \qquad 
 E_2{}^3 \leftrightarrow \sfw_2\partial_\sfr, \qquad 
 E_0{}^3 \leftrightarrow 0 \\ \hline
 \end{array}
 \]
 \caption{The curvature module for ILC structures}
 \label{F:LC-curvature}
 \end{table}

 \end{footnotesize}
  
 \subsection{Structure equations}

 Write the Cartan connection $\omega \in \Omega^1(\cG;\fg)$ as
 \begin{align*}
   \omega = [\omega^a{}_b] &= \bmat{ 
   \frac{3\zeta_1 + \zeta_2}{4} & \tau_1 & \tau_2 & \tau_5\\ 
   \varpi_1 & \nu_1 + \frac{\zeta_2 - \zeta_1}{4} & \nu_2 & \tau_3\\
   \varpi_2 & \nu_3 & -\nu_1 + \frac{\zeta_2 - \zeta_1}{4} & \tau_4\\
   \varpi_5 & \varpi_3 & \varpi_4 & -\frac{\zeta_1 + 3\zeta_2}{4}}.
 \end{align*}
 Decompose $K = K^a{}_b E_a{}^b$, where $K^a{}_b \in \Omega^2(\cG)$.
 By torsion-freeness, $K^1{}_0 = K^2{}_0 = K^3{}_0 = K^3{}_1 = K^3{}_2 = 0$, and for ILC structures $K^0{}_0 = K^3{}_3 = 0$. The structure equations are $d\omega^a{}_b = -\omega^a{}_c \wedge \omega^c{}_b + K^a{}_b$, i.e.
 \begin{footnotesize}
 \[
 \begin{array}{l@{\quad\quad}l}
 \begin{array}{r@{\,}l}
 d\tau_1 &= (\nu_1 -\zeta_1) \wedge \tau_1 + \nu_3 \wedge \tau_2 - \tau_5 \wedge \varpi_3 +  K^0{}_1\\
 d\tau_2 &= \nu_2 \wedge \tau_1 -(\zeta_1 + \nu_1) \wedge \tau_2 - \tau_5 \wedge \varpi_4 +  K^0{}_2\\
 d\tau_3 &= \tau_5 \wedge \varpi_1 - \left(\nu_1 + \zeta_2\right) \wedge \tau_3 - \nu_2 \wedge \tau_4 + K^1{}_3\\
 d\tau_4 &= \tau_5 \wedge \varpi_2 - \nu_3 \wedge \tau_3 + \left(\nu_1 - \zeta_2 \right) \wedge \tau_4 + K^2{}_3\\
 d\tau_5 &= - \tau_1 \wedge \tau_3 - \tau_2 \wedge \tau_4 -(\zeta_1 + \zeta_2) \wedge \tau_5 +  K^0{}_3\\
 \end{array} &
 \begin{array}{r@{\,}l}
 d\varpi_1 &=  \left(\zeta_1 - \nu_1 \right) \wedge \varpi_1 - \nu_2 \wedge \varpi_2 - \tau_3 \wedge \varpi_5 \\
 d\varpi_2 &= - \nu_3 \wedge \varpi_1 + \left(\nu_1 + \zeta_1 \right) \wedge \varpi_2 - \tau_4 \wedge \varpi_5 \\
 d\varpi_3 &= \left(\nu_1 + \zeta_2 \right) \wedge \varpi_3 + \nu_3 \wedge \varpi_4 + \tau_1 \wedge \varpi_5 \\
 d\varpi_4 &= \nu_2 \wedge \varpi_3  + \left( \zeta_2 - \nu_1 \right) \wedge \varpi_4 + \tau_2 \wedge \varpi_5 \\
 d\varpi_5 &= \varpi_1 \wedge \varpi_3 + \varpi_2 \wedge \varpi_4 + (\zeta_1 + \zeta_2) \wedge \varpi_5 
 \end{array}\\
 \multicolumn{2}{l}{
 \begin{array}{r@{\,}l}
 d\zeta_1 &= -\frac{3}{2} \tau_1 \wedge \varpi_1 - \frac{3}{2} \tau_2 \wedge \varpi_2 + \frac{1}{2} \tau_3 \wedge \varpi_3 + \frac{1}{2} \tau_4 \wedge \varpi_4 - \tau_5 \wedge \varpi_5 \\
 d\zeta_2 &= \frac{1}{2} \tau_1 \wedge \varpi_1 + \frac{1}{2} \tau_2 \wedge \varpi_2 - \frac{3}{2} \tau_3 \wedge \varpi_3 - \frac{3}{2} \tau_4 \wedge \varpi_4 - \tau_5 \wedge \varpi_5 \\
 d\nu_1 &= \frac{1}{2} \tau_1 \wedge \varpi_1 - \frac{1}{2} \tau_2 \wedge \varpi_2 - \frac{1}{2} \tau_3 \wedge \varpi_3 + \frac{1}{2} \tau_4 \wedge \varpi_4 - \nu_2 \wedge \nu_3  + \frac{1}{2} K^1{}_1 - \frac{1}{2} K^2{}_2\\
 d\nu_2 &= \tau_2 \wedge \varpi_1 - 2 \nu_1 \wedge \nu_2 - \tau_3 \wedge \varpi_4 + K^1{}_2\\
 d\nu_3 &= \tau_1 \wedge \varpi_2 + 2\nu_1 \wedge \nu_3 - \tau_4 \wedge \varpi_3 + K^2{}_1
 \end{array}}
 \end{array}
 \]
  \end{footnotesize}
 
 To convert from $\kappa : \cG \to \bigwedge^2 \fp_+ \ot \fg$ to $K \in \Omega^2(\cG;\fg)$, 
 the Killing form on $\fsl_4$ induces $(\fg/\fp)^* \cong \fp_+$:
 \[
 E_0{}^1 \leftrightarrow \omega^1{}_0 = \varpi_1, \quad
 E_0{}^2 \leftrightarrow \omega^2{}_0 = \varpi_2, \quad
 E_1{}^3 \leftrightarrow \omega^3{}_1 = \varpi_3, \quad
 E_2{}^3 \leftrightarrow \omega^3{}_2 = \varpi_4, \quad
 E_0{}^3 \leftrightarrow \omega^3{}_0 = \varpi_5.
 \]
 Writing $\varpi_{kl} := \varpi_k \wedge \varpi_l$ for $1 \leq k,l \leq 5$, we have
 \begin{footnotesize}
 \begin{align*}
 K^1{}_1 = -K^2{}_2 &= -A_2 \varpi_{23} + A_3 ( \varpi_{13} - \varpi_{24}) + A_4 \varpi_{14}
- B_2 \varpi_{25} + B_3 \varpi_{15} - B_6 \varpi_{35} - B_7 \varpi_{45}\\
 K^1{}_2 &=+ A_1 \varpi_{23} - A_2(\varpi_{13} - \varpi_{24}) - A_3 \varpi_{14} + B_1 \varpi_{25} - B_2 \varpi_{15} + B_5 \varpi_{35} + B_6 \varpi_{45}\\
 K^2{}_1 &= -A_3 \varpi_{23} + A_4(\varpi_{13} - \varpi_{24}) + A_5 \varpi_{14} - B_3 \varpi_{25} + B_4 \varpi_{15} - B_7 \varpi_{35} - B_8 \varpi_{45}\\
 K^0{}_1 &= -B_2 \varpi_{23} + B_3 (\varpi_{13} - \varpi_{24}) + B_4 \varpi_{14} - C_2 \varpi_{25} + C_3 \varpi_{15} - C_5 \varpi_{35} - C_6 \varpi_{45}\\
 K^0{}_2 &= +B_1 \varpi_{23} - B_2 (\varpi_{13} - \varpi_{24}) - B_3 \varpi_{14} + C_1 \varpi_{25} - C_2 \varpi_{15} + C_4 \varpi_{35} + C_5 \varpi_{45}\\
 K^1{}_3 &= -B_5 \varpi_{23} + B_6 (\varpi_{13} - \varpi_{24}) + B_7 \varpi_{14} - C_4 \varpi_{25} + C_5 \varpi_{15} - C_7 \varpi_{35} - C_8 \varpi_{45}\\
 K^2{}_3 &= -B_6 \varpi_{23} + B_7 (\varpi_{13} - \varpi_{24}) + B_8 \varpi_{14} - C_5 \varpi_{25} + C_6 \varpi_{15} - C_8 \varpi_{35} - C_9 \varpi_{45}\\
 K^0{}_3 &= -C_4 \varpi_{23} + C_5 (\varpi_{13} - \varpi_{24}) + C_6 \varpi_{14} - D_1 \varpi_{25} + D_2 \varpi_{15} - D_3 \varpi_{35} - D_4 \varpi_{45}
 \end{align*}
 \end{footnotesize}

  Recall from Section \ref{S:LC-parabolic} that $\kappa$ is $P$-equivariant.  Let $\lambda$ be the $P$-representation $\bbK$.  Then
  \[
  R^*_p \kappa = \lambda(p^{-1}) \cdot \kappa \qRa
  \left. \frac{d}{d\epsilon} \right|_{\epsilon = 0} R_{\exp(\epsilon X)}^* \kappa = -\lambda'(X) \cdot \kappa.
  \]
  We let $\delta$ refer to the infinitesimal $\fp$-action. Given $X \in \fp$ as in \eqref{E:p-coords}, we obtain Table \ref{F:K-vert}.
 \begin{footnotesize}
 \begin{table}[h]
 \[
 \begin{array}{rl}
 -\delta A_1 &= (z_1 + z_2 + 4 v_1) A_1 + 4 v_2 A_2\\
 -\delta A_2 &= v_3 A_1 + (z_1 + z_2 + 2 v_1) A_2 + 3 v_2 A_3\\
 -\delta A_3 &= 2 v_3 A_2 + (z_1 + z_2) A_3 + 2 v_2 A_4\\
 -\delta A_4 &= 3 v_3 A_3 + (z_1 + z_2 - 2 v_1) A_4 + v_2 A_5\\
 -\delta A_5 &= 4 v_3 A_4 + (z_1 + z_2 - 4 v_1) A_5\\ \hline
 -\delta B_1 &= (2 z_1 + z_2 + 3 v_1) B_1 + 3 v_2 B_2 + t_1 A_1 + t_2 A_2\\
 -\delta B_2 &= v_3 B_1 + (2 z_1 + z_2 + v_1) B_2 + 2 v_2 B_3 + t_1 A_2 + t_2 A_3\\
 -\delta B_3 &= 2 v_3 B_2 + (2 z_1 + z_2 - v_1) B_3 + v_2 B_4 + t_1 A_3 + t_2 A_4\\
 -\delta B_4 &= 3 v_3 B_3 + (2 z_1 + z_2 - 3 v_1) B_4 + t_1 A_4 + t_2 A_5\\ \hline
 -\delta B_5 &= (z_1 + 2 z_2 + 3 v_1) B_5 + 3 v_2 B_6 + t_4 A_1 - t_3 A_2\\
 -\delta B_6 &= v_3 B_5 + (z_1 + 2 z_2 + v_1) B_6 + 2 v_2 B_7 + t_4 A_2 - t_3 A_3\\
 -\delta B_7 &= 2 v_3 B_6 + (z_1 + 2 z_2 - v_1) B_7 + v_2 B_8 + t_4 A_3 - t_3 A_4\\
 -\delta B_8 &= 3 v_3 B_7 + (z_1 + 2 z_2 - 3 v_1) B_8 + t_4 A_4 - t_3 A_5\\ \hline
 -\delta C_1 &= (3 z_1 + z_2 + 2 v_1) C_1 + 2 v_2 C_2 + 2 t_1 B_1 + 2 t_2 B_2\\
 -\delta C_2 &= v_3 C_1 + (3 z_1 + z_2) C_2 + v_2 C_3 + 2 t_1 B_2 + 2 t_2 B_3\\
 -\delta C_3 &= 2 v_3 C_2 + (3 z_1 + z_2 - 2 v_1) C_3 + 2 t_1 B_3 + 2 t_2 B_4\\ \hline
 -\delta C_4 &= (2 z_1 + 2 z_2 + 2 v_1) C_4 + 2 v_2 C_5 + t_4 B_1 - t_3 B_2 + t_1 B_5 + t_2 B_6\\
 -\delta C_5 &= v_3 C_4 + (2 z_1 + 2 z_2) C_5 + v_2 C_6 + t_4 B_2 - t_3 B_3 + t_1 B_6 + t_2 B_7\\
 -\delta C_6 &= 2 v_3 C_5 + (2 z_1 + 2 z_2 - 2 v_1) C_6 + t_4 B_3 - t_3 B_4 + t_1 B_7 + t_2 B_8\\ \hline
 -\delta C_7 &= (z_1 + 3 z_2 + 2 v_1) C_7 + 2 v_2 C_8 + 2 t_4 B_5 - 2 t_3 B_6\\
 -\delta C_8 &= v_3 C_7 + (z_1 + 3 z_2) C_8 + v_2 C_9 + 2 t_4 B_6 - 2 t_3 B_7\\
 -\delta C_9 &= 2 v_3 C_8 + (z_1 + 3 z_2 - 2 v_1) C_9 + 2 t_4 B_7 - 2 t_3 B_8\\ \hline
 -\delta D_1 &= (3 z_1 + 2 z_2 + v_1) D_1 + v_2 D_2 + t_4 C_1 - t_3 C_2 + 2 t_1 C_4 + 2 t_2 C_5\\
 -\delta D_2 &= v_3 D_1 + (3 z_1 + 2 z_2 - v_1) D_2 + t_4 C_2 - t_3 C_3 + 2 t_1 C_5 + 2 t_2 C_6\\ \hline
 -\delta D_3 &= (2 z_1 + 3 z_2 + v_1) D_3 + v_2 D_4 + 2 t_4 C_4 - 2 t_3 C_5 + t_1 C_7 + t_2 C_8\\
 -\delta D_4 &= v_3 D_3 + (2 z_1 + 3 z_2 - v_1) D_4 + 2 t_4 C_5 - 2 t_3 C_6 + t_1 C_8 + t_2 C_9
 \end{array}
 \]
 \caption{Vertical derivatives of curvature coefficients}
 \label{F:K-vert}
 \end{table}
 \end{footnotesize}
 
 On $\cG$, the curvature coefficients $A,B,C,D$ will satisfy structure equations that also account for variation in the horizontal direction.  These are immediately deduced from Table \ref{F:K-vert}.  For example,
 \begin{align} \label{E:dA1}
 d(A_1) = \delta(A_1) + \alpha_1 = (\zeta_1 + \zeta_2 + 4 \nu_1) A_1 + 4 \nu_2 A_2 + \alpha_1.
 \end{align}
 Here, $\alpha_1$ is a semi-basic form, i.e. it is a linear combination (with coefficients that are functions on $\cG$) of the $\varpi_i$.  We have abused notation in \eqref{E:dA1} by taking a slightly different meaning for $\delta(A_1)$: we have taken the corresponding formula in Table \ref{F:K-vert} and replaced Lie algebra parameters by their corresponding forms in the Cartan connection.  This abuse is justified by axiom (CC.3) in the Cartan connection definition.  Similarly, we will write
 \begin{align*}
 d(A_i) = \delta(A_i) + \alpha_i, \quad
 d(B_i) = \delta(B_i) + \beta_i, \quad
 d(C_i) = \delta(C_i) + \gamma_i, \quad
 d(D_i) = \delta(D_i) + \delta_i.
 \end{align*}
 (The repetition of $\delta$ in the last formula is slightly unfortunate, but should not cause much confusion.)
 
 \subsection{Duality}
 The pullback of the subbundles $E,V \subset TM$ via the projection $\pi : \cG \to M$ are
 \[
 \pi^{-1}(E) = \{ \varpi_3 = \varpi_4 = \varpi_5 = 0 \}, \qquad
 \pi^{-1}(V) = \{ \varpi_1 = \varpi_2 = \varpi_5 = 0 \}.
 \]
 These are interchanged by the duality transformation, a representative of which is
 \begin{align*}
 \iota : &(\varpi_1, \varpi_2, \varpi_3, \varpi_4, \varpi_5, \zeta_1, \zeta_2, \nu_1, \nu_2, \nu_3, \tau_1, \tau_2, \tau_3, \tau_4, \tau_5)\\
 &\mapsto (\varpi_3, \varpi_4, \varpi_1, \varpi_2, -\varpi_5, \zeta_2, \zeta_1, -\nu_1, -\nu_3, -\nu_2, \tau_3, \tau_4, \tau_1, \tau_2, -\tau_5),
 \end{align*}
 which induces
 \begin{align*}
 (A_1,A_2,A_3,A_4,A_5) &\mapsto (A_5,-A_4,A_3,-A_2,A_1)\\
 (B_1,B_2,B_3,B_4,B_5,B_6,B_7,B_8) &\mapsto  (-B_8,B_7,-B_6,B_5,B_4,-B_3,B_2,-B_1)\\
 (C_1,C_2,C_3,C_4,C_5,C_6,C_7,C_8,C_9) &\mapsto  (C_9,-C_8,C_7,-C_6,C_5,-C_4,C_3,-C_2,C_1)\\
 (D_1,D_2,D_3,D_4) &\mapsto (D_4, -D_3, -D_2, D_1)
 \end{align*}
 In particular, the induced action on the quartic is realizable by a $G_0$-transformation, namely that induced by $\rho : (x,y) \mapsto (y,-x)$.  Since any $G_0$-transformation preserves root type, this proves:
 
 \begin{prop}
 The duality transformation preserves root type.
 \end{prop}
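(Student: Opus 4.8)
The plan is to reduce the statement to the elementary fact that the root type of a binary quartic --- the partition of $4$ recording the multiplicities of its zeros in $\bbP^1$ --- is unchanged by any invertible linear substitution of its two variables. By \eqref{E:KH2} and the coordinates on the curvature module introduced after Table~\ref{F:LC-curvature}, the harmonic curvature $\kappa_H$ of a five-dimensional ILC structure is represented (with respect to a local section $s\colon M\to\cG$, hence canonically only up to $G_0$) by the binary quartic
\[
A = A_1\sfr^4 + 4A_2\sfr^3\sfs + 6A_3\sfr^2\sfs^2 + 4A_4\sfr\sfs^3 + A_5\sfs^4 \in \bbW\cong\bigodot^4(\bbC^2).
\]
First I would observe that root type is $G_0$-invariant: the semisimple part $\SL_2\subset G_0$ acts on $A$ by a linear substitution in $(\sfr,\sfs)$, which induces a M\"obius transformation of $\bbP^1$ and hence a bijection of the zero set preserving multiplicities, while $\cZ(\fg_0)$ acts by an overall rescaling of $A$; neither changes the root type. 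In particular $\kappa_H$ has a well-defined root type, independent of the choice of $s$.

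Next I would compute how the duality transformation $\iota$ acts on $\kappa_H$ and recognise it as a $G_0$-transformation. From the induced action on the quartic coefficients recorded just before the statement, $(A_1,A_2,A_3,A_4,A_5)\mapsto(A_5,-A_4,A_3,-A_2,A_1)$; equivalently $\iota^* A$ is obtained from $A$ by the substitution $\sfr\mapsto\sfs,\ \sfs\mapsto-\sfr$, as one sees by expanding $A_1\sfs^4 - 4A_2\sfr\sfs^3 + 6A_3\sfr^2\sfs^2 - 4A_4\sfr^3\sfs + A_5\sfr^4$. This substitution is exactly the action on binary forms of the element $\rho\in\SL_2\subset G_0$ with matrix $\left(\begin{smallmatrix}0&1\\-1&0\end{smallmatrix}\right)$ (the map $(x,y)\mapsto(y,-x)$ on the standard representation). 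Since $\rho$ preserves root type by the first step, and since the harmonic curvature of the dual structure agrees with $\rho\cdot\kappa_H$ up to the (root-type-irrelevant) $G_0$-ambiguity, duality preserves root type.

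The only point that requires actual care --- and it is a finite, mechanical verification rather than a conceptual difficulty --- is pinning down the induced action of $\iota$ on the curvature coefficients: one substitutes the representative of $\iota$ on the Cartan forms into the structure equations and the identifications of Table~\ref{F:LC-curvature} to confirm the displayed formula $(A_1,\dots,A_5)\mapsto(A_5,-A_4,A_3,-A_2,A_1)$, and checks en route that $\iota$ is genuinely a morphism of regular, normal Cartan geometries, so that ``root type of the dual structure'' is the quantity being compared. Finally, I would note that the whole argument is insensitive to the choice of representative of $\iota$ modulo $P$, because $\fp_+$ acts trivially on $\bbW\cong\ker(\partial^*)/\im(\partial^*)$, so the $G_0$-orbit of the quartic --- and in particular its root type --- does not depend on that choice.
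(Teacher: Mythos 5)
Your argument is correct and coincides with the paper's own proof: one reads off that duality sends $(A_1,\dots,A_5)\mapsto(A_5,-A_4,A_3,-A_2,A_1)$, recognizes this as the $G_0$-transformation induced by $(x,y)\mapsto(y,-x)$ acting on the binary quartic, and concludes since $G_0$ preserves root type. The extra remarks you make (well-definedness of root type up to $G_0$, triviality of the $P_+$-action on $\bbW$) are consistent with what the paper already establishes in Section~\ref{S:five}, so nothing further is needed.
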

 
 However, the duality transformation differs from $(x,y) \mapsto (y,-x)$ on $B,C,D$ coefficients:
 \begin{align*}
 (B_1,B_2,B_3,B_4,B_5,B_6,B_7,B_8) &\mapsto  (-B_4,B_3,-B_2,B_1,-B_8,B_7,-B_6,B_5)\\
 (C_1,C_2,C_3,C_4,C_5,C_6,C_7,C_8,C_9) &\mapsto  (C_3,-C_2,C_1,C_6,-C_5,C_4,C_9,-C_8,C_7)\\
 (D_1,D_2,D_3,D_4) &\mapsto (-D_2, D_1, -D_4, D_3) 
 \end{align*}
 Note that the composition $\rho \circ \iota$ preserves $A$ and induces
 \begin{align*}
 (B_1,B_2,B_3,B_4,B_5,B_6,B_7,B_8) &\mapsto  (-B_5,-B_6,-B_7,-B_8,B_1,B_2,B_3,B_4)\\
 (C_1,C_2,C_3,C_4,C_5,C_6,C_7,C_8,C_9) &\mapsto  (C_7,C_8,C_9,-C_4,-C_5,-C_6,C_1,C_2,C_3)\\
 (D_1,D_2,D_3,D_4) &\mapsto (D_3, D_4, -D_1, -D_2) 
 \end{align*}

 \section{Cartan analysis} \label{S:Cartan}
 
 Starting with the (regular, normal) Cartan geometry $(\cG \to M, \omega)$ which is an equivalent description of any ILC structure, the goal is to classify all homogeneous sub-bundles of total dimension at least six that are obtained via natural reductions of the structure group $P$.  We give an outline of how this is achieved in the type N case.  The analysis for types D and III are similar, so we only provide a few details on how the analysis is begun in these cases.  Types II and I do not contain any multiply transitive structures.  The reader interested in the full details of the Cartan analysis is encouraged to examine the Maple files which accompany the arXiv submission of this paper.
 
 \subsection{Type N reduction}
 
 Using the $P$-action ($G_0$-action), we can always normalize $A = y^4$, i.e. $A_5 = 1, \,A_4 = A_3 = A_2 = A_1 = 0$.\footnote{This normalization is always possible working over $\bbC$, but over $\bbR$ we would have two possibilities: $A = \pm y^4$.}  Now $0 = d(A_i)$ are equivalent to:
 \begin{align} \label{E:N-rel0}
 \alpha_1 = \alpha_2 = \alpha_3 = 0, \quad \nu_2 = \alpha_4, \quad \nu_1 = \frac{1}{4} ( \zeta_1 + \zeta_2 - \alpha_5 ).
 \end{align}
 Differentiating the $\nu_1$-relation in \eqref{E:N-rel0} yields the vertical action on coefficients in $\alpha_5 = a_{5j} \varpi_j$.   (More precisely, we calculate $0 = d(\nu_1 - \frac{1}{4} (\zeta_1 + \zeta_2 - \alpha_5)) \wedge \varpi_i \wedge \varpi_j \wedge \varpi_k \wedge \varpi_\ell$, where $1 \leq i,j,k,l \leq 5$.)
 \begin{align*}
 \delta a_{51} &= \left( \frac{z_2-3z_1}{4} \right) a_{51} + (a_{52} - 4a_{41}) v_3 - 3 t_1, \qquad
 \delta a_{52} = - \left( \frac{5z_1 + z_2}{4} \right) a_{52} - 4 a_{42} v_3 + t_2, \\
 \delta a_{54} &= \left( \frac{z_1 - 3 z_2}{4} \right) a_{54} -(a_{53} + 4 a_{44}) v_3 - 3 t_4, \qquad
  \delta a_{53} = - \left( \frac{z_1 + 5 z_2}{4} \right) a_{53} - 4 a_{43} v_3 + t_3,  \\
 \delta a_{55} &= - a_{55} (z_1 + z_2) - 4a_{45} v_3 - a_{53} t_1 - a_{54} t_2 + a_{51} t_3 + a_{52} t_4 -2t_5.
 \end{align*}
 
 The  $t_j$ induce translations on $a_{5j}$, so we can always normalize $\alpha_5 = 0$.  This forces $t_i = \lambda_i v_3$, where $(\lambda_1,\lambda_2,\lambda_3,\lambda_4) =  (-\frac{4}{3} a_{41}, \, 4 a_{42}, \, 4 a_{43},\, -\frac{4}{3} a_{44},\, -2 a_{45})$.  Hence, there exists functions $T_{ij}$ such that
 \begin{align} \label{E:N-rel1}
 \tau_i = \lambda_i \nu_3 + \sum_j T_{ij} \varpi_j.
 \end{align}
 We have reduced to a 3-dim structure algebra (with parameters $v_3,z_1,z_2$).  We will show that:

  \begin{theorem}
 Any multiply transitive type N structure with the normalizations $A = y^4$ and $\alpha_5 = 0$ satisfies $B = C = D = 0$.
 \end{theorem}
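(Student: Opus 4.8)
The plan is to combine the representation theory of the curvature module $\bbK$, restricted to the residual structure algebra, with the constraints that multiple transitivity imposes on the isotropy action on curvature. After the normalizations $A=\sfs^4$ and $\alpha_5=0$ the residual structure algebra is the $3$-dimensional $\fn:=\langle P_1:=Z_1-Z_3,\ P_2:=\sfH+4Z_1,\ \sfY\rangle$ — the annihilator of $\sfs^4\in\bbW$ inside $\fg_0$ (the algebra $\fann$ recorded for type N in the proof of Theorem~\ref{T:Petrov-sym}) — realised on the reduced bundle $\cG_{\mathrm{red}}$ by the forms $\nu_3,\zeta_1,\zeta_2$, and \eqref{E:N-rel0}--\eqref{E:N-rel1} hold, so that $A$ is parallel on $\cG_{\mathrm{red}}$. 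For a multiply transitive structure the isotropy subalgebra at any point $\phi\in\cG_{\mathrm{red}}$ embeds as a subalgebra $\fi_\phi\subseteq\fn$ with $\dim\fi_\phi\geq 1$; since the curvature function is at once $P$-equivariant and invariant under the symmetry flow, $\kappa(\phi)$ — and likewise each of its covariant derivatives — is annihilated by the induced action of $\fi_\phi$ on $\bbK$ (cf.~\cite{KT2013,CS2009}). As the $\fp$-action preserves the homogeneity grading, vanishing of the $B$-, $C$-, $D$-layers of $\kappa$ at one point propagates over the whole bundle, so it suffices to verify it at a single $\phi$.

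The next step is to read off from Table~\ref{F:LC-curvature} the weights of the higher summands of $\bbK$ under the torus $\langle P_1,P_2\rangle$. The key facts are: $P_1=Z_1-Z_3$ acts invertibly on $B\oplus B'$ (weights $+1$ on $B$, $-1$ on $B'$) and on $D\oplus D'$ (weights $+1$, $-1$), while on $C\oplus\widehat C\oplus C'$ its kernel is exactly $\widehat C$, on which $P_2=\sfH+4Z_1$ then acts invertibly; hence the only torus-fixed vector of $\bbK$ is $\sfs^4$. A short computation with the adjoint action (conjugation by $\exp(t\sfY)$, which fixes $\sfs^4$) shows that every $2$- or $3$-dimensional subalgebra of $\fn$ is conjugate to the torus $\langle P_1,P_2\rangle$ or contains $\sfY$ together with a nonzero torus element $W$. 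In the first case $\kappa(\phi)$ is torus-fixed up to conjugacy, hence proportional to $\sfs^4$, so $B=C=D=0$. In the second case one intersects $\ker\lambda'(\sfY)|_{B\oplus\dots\oplus D'}$, the span of the lowest-weight coefficients $B_4,B_8,C_3,C_6,C_9,D_2,D_4$, with $\ker\lambda'(W)$; the eigenvalues of $\lambda'(W)$ on this $7$-dimensional space are nonzero unless $W$ lies in one of finitely many special directions, so again $B=C=D=0$ for generic $W$. The analogous direct check disposes of the one-dimensional cases $\fi_\phi=\langle aP_1+bP_2+c\sfY\rangle$ with $a+2b=0$, since then $-2P_1+P_2$ commutes with $\sfY$ and acts invertibly on all of $B\oplus\dots\oplus D'$.

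What remains is a short explicit list of recalcitrant isotropy types: $\fi_\phi=\langle\sfY\rangle$, $\fi_\phi$ a line $\langle aP_1+bP_2+c\sfY\rangle$ in a special direction, and $\fi_\phi=\langle\sfY,W\rangle$ with $W$ special (e.g. $W=Z_1-Z_3$). For these the isotropy action on $\kappa$ alone leaves a proper subspace of the $B\oplus C\oplus D$ layer unconstrained, and one must use the Cartan structure equations on $\cG_{\mathrm{red}}$. The procedure is: use the residual $\fn$-action to put the surviving component of $B$ (or, if $B=0$, of $C$, then of $D$) into a normal form; compute $dB_i=\delta B_i+\beta_i$, $dC_i=\delta C_i+\gamma_i$, $dD_i=\delta D_i+\delta_i$ from the versions of Table~\ref{F:K-vert} adapted to $\cG_{\mathrm{red}}$ — substituting $\nu_1=\tfrac14(\zeta_1+\zeta_2)$, $\nu_2$ semibasic, and $\tau_i=\lambda_i\nu_3+\sum_j T_{ij}\varpi_j$; and impose the Bianchi/integrability identities $d(dB_i)=d(dC_i)=d(dD_i)=0$. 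Each genuinely nonzero component then forces a further reduction of the structure group, and one checks that the reduction is driven to $0$-dimensional fibres, contradicting multiple transitivity.

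I expect this last step to be the main obstacle. On $\cG_{\mathrm{red}}$ the forms $\tau_i$ are no longer independent of $\nu_3$, so the auxiliary functions $\lambda_i$ and $T_{ij}$ enter the structure equations on an equal footing with the curvature coefficients, and the Bianchi identities couple $B$, $C$, $D$ to them and to the first covariant derivative of $A$ in a way that has to be unwound component by component in each of the residual cases — this is precisely the computation carried out in the accompanying Maple files.
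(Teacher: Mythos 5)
Your reduction rests on the principle that the isotropy subalgebra at a point of the reduced bundle annihilates $\kappa(\phi)$, with the isotropy regarded as a subalgebra of $\fann(\sfs^4)\subset\fg_0$ acting on $\bbK$ purely through the weight formulas of Table~\ref{F:K-vert} with $t_i=0$. That is legitimate for $\kappa_H$ (where $\fp_+$ acts trivially, which is exactly how \eqref{E:KT-bound} is used), and it is legitimate here for isotropy directions with vanishing $\nu_3$-component, since on the reduced bundle those have no $\fp_+$-part. But by \eqref{E:N-rel1} an isotropy direction with nonzero $\nu_3$-component carries $\fp_+$-components $t_i=\lambda_i v_3$, and because $A_5=1\neq 0$ these contribute inhomogeneous terms to the vertical action on the higher layers (e.g.\ $-\delta B_4$ contains $t_1A_4+t_2A_5=\lambda_2 v_3$). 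Annihilation by such an element therefore does \emph{not} place $\kappa(\phi)$ in $\ker\lambda'(\sfY)$; it only produces relations between the $B$-layer and the $\lambda_i$ — precisely the relations $a_{4j}\sim B_j$ that the Bianchi identities yield — so your ``intersect with $\ker\lambda'(\sfY)$'' step is unjustified. The same phenomenon breaks the propagation claim: $\fp_+$ raises the bi-grading and $E_0{}^2\leftrightarrow\sfw_1\partial_{\sfs}$ maps $\sfs^4$ into the cubic layer, so $\{B=C=D=0\}$ is \emph{not} a $P$-submodule of $\bbK$, and ``vanishing at one point propagates'' requires the transitive symmetry action together with control of exactly those $\fp_+$-corrections (which in turn involve the identifications you are trying to prove). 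There is also an algebraic slip: $[\sfH+4Z_1,\sfY]=-2\sfY$ and $[Z_1-Z_3,\sfY]=0$, so no torus element other than a multiple of $Z_1-Z_3$ commutes with $\sfY$; the correct dichotomy for one-dimensional isotropy is ``$P_2$-component nonzero (then conjugate into the torus by $\exp(t\sfY)$) versus zero,'' not your condition $a+2b=0$.

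The more serious problem is completeness. Even where the weight argument is sound, it only disposes of isotropy algebras containing a torus element in sufficiently general position. In the actual classification every $6$-symmetry type N model (N.6-1, N.6-2) has one-dimensional isotropy of $\sfY$-type (the $\nu_3$-direction), and N.7-2 also has a $\nu_3$-direction in its isotropy — that is, your ``recalcitrant'' list is not a fringe of exceptional cases but the generic multiply transitive situation the theorem must cover. For those cases your proposal says to run the structure equations, normalize surviving components, impose $d^2=0$, and drive the reduction to zero-dimensional fibres, deferring the execution to ``the accompanying Maple files''; but those computations (the trichotomies on $B_3,B_7$ and then $B_4,B_8$, the contradictions extracted from $d^2\nu_3$ and $d^2\tau_1$, and the subsequent forcing of $C=0$ and then $D=0$ via $0=d(B_4)=d(B_8)$, $0=d(\nu_2-\alpha_4)$ and $0=d^2\tau_1=d^2\tau_2$) \emph{are} the paper's proof of this theorem. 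As written, your argument proves the statement only for large or torus-type isotropy and reduces the essential remaining cases to an unperformed calculation, so it does not yet constitute a proof.
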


 The integrability conditions $0 = d^2 \nu_i \wedge \varpi_5$ $(i=1,2,3)$ force 
 \begin{align*}
 & B_1 = B_2 = B_5 = B_6 = 0, \quad a_{41} = -2 B_4, \quad a_{42}= -2 B_3, \quad a_{43}= +2 B_7, \quad a_{44} = -2 B_8,
 \end{align*}
 and $0 = d(B_1) = d(B_2) = d(B_5) = d(B_6)$ are equivalent to:
 \[
 \beta_1 = \beta_5 = 0, \quad \beta_2 = 2 B_3 \alpha_4, \quad \beta_6 = 2 B_7 \alpha_4.
 \]
 Moreover, $B_3$ and $B_7$ are relative invariants:
 \begin{align*}
 \delta B_3 &= -\left(\frac{7z_1+3z_2}{4}\right) B_3, &  \delta B_4 &= 5 B_3 v_3 -  \left(\frac{5z_1+z_2}{4}\right) B_4, \\
 \delta B_7 &= -\left(\frac{3z_1+7z_2}{4}\right) B_7, &  \delta B_8 &= 5 B_7 v_3 -  \left(\frac{z_1+5z_2}{4}\right) B_8.
 \end{align*}
 If $B_3 B_7$ is nowhere vanishing, we can normalize $(B_3,B_4,B_7)=(1,0,1)$.  This trivializes the structure algebra and so such structures admit at most five symmetries (henceforth excluded since these are not multiply transitive).  We have the following trichotomy\footnote{Implicitly, this trichotomy depends on $B_3$ and $B_7$ have locally constant type, i.e. the stated invariant conditions are true locally.  For (multiply) transitive structures, this is always true.}:
 \[
 \begin{array}{ccc}
 \mbox{Condition} & \mbox{Bound on symmetry dimension} \\ \hline
  B_3 = B_7 = 0 & 8\\ 
 (B_3,B_7) \neq (0,0),\, B_3 B_7 = 0 & 6\\
 B_3 B_7 \neq 0 & 5
 \end{array}
 \]
 
 \begin{lemma} No structures with 6 symmetries exist when $(B_3,B_7) \neq (0,0),\, B_3 B_7 = 0$.
 \end{lemma}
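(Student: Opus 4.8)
The plan is to exploit the $G_0$-symmetry $\rho\circ\iota$ (composition of duality with $(x,y)\mapsto(y,-x)$) to reduce to a single case, and then run the Cartan reduction to contradiction. Observe first that under the $\rho\circ\iota$ action the pair $(B_3,B_7)$ is interchanged (up to sign): from the transformation rules for $B$ in Section \ref{S:five} one checks that $\rho\circ\iota$ sends $(B_1,\dots,B_8)$ to $(-B_5,-B_6,-B_7,-B_8,B_1,B_2,B_3,B_4)$, so it swaps $B_3\leftrightarrow B_7$ (with a sign on the first). Hence the hypothesis $(B_3,B_7)\ne(0,0),\ B_3B_7=0$ splits into the two cases $B_3\ne 0,\ B_7=0$ and $B_3=0,\ B_7\ne0$, and these are exchanged by a symmetry preserving the type N normalizations $A=y^4$, $\alpha_5=0$ (since $\rho$ fixes $A_5=1$ up to the residual $G_0$ we have already used). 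So it suffices to treat, say, $B_3\ne 0,\ B_7=0$.

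Next I would carry out the normalization. Since $B_3$ is a nonzero relative invariant with $\delta B_3 = -\tfrac14(7z_1+3z_2)B_3$, and $B_4$ transforms with a $v_3$-translation term ($\delta B_4 = 5B_3 v_3 - \tfrac14(5z_1+z_2)B_4$), we can use the $v_3$-parameter to kill $B_4$ and then one combination of $z_1,z_2$ to set $B_3=1$. This leaves a one-dimensional residual structure algebra (one combination of $z_1,z_2$), consistent with the claimed bound of $6$ on the symmetry dimension. With $B_3=1,\ B_4=0,\ B_7=0$, re-examine the equations $0=d(B_3)$, $0=d(B_7)$, $0=d(B_4)$: these pin down $\nu_3$ (the old $v_3$-direction form) and several of the remaining $1$-forms in terms of the coframe $\{\varpi_i\}$, and force new relations among the $C$- and $D$-coefficients via the structure equations (Table \ref{F:K-vert} specialized to these values, together with $d^2=0$). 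Concretely, differentiating the relations $\beta_2 = 2B_3\alpha_4$, $\beta_6 = 2B_7\alpha_4 = 0$ and using the curvature structure equations of Section \ref{S:five} produces a cascade of algebraic consequences on $B_8$, on the $C_i$, and on the $D_i$.

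The heart of the argument is then a closing-up computation: impose $d^2=0$ on the now-reduced structure equations and the Bianchi-type identities coming from $0 = d(K^a{}_b)$ restricted appropriately (e.g.\ $0=d^2\nu_i$, $0 = d^2\tau_i$, $0=d^2\zeta_i$ wedged with suitable $\varpi_j$'s as in the preceding steps), and show this system is inconsistent unless all remaining curvature coefficients vanish — which, with $B_3=1$, is a contradiction. The expected mechanism: the residual $1$-dimensional structure algebra acts on the remaining free functions $T_{ij}$ and curvature coefficients with definite weights, so the integrability conditions become weight-homogeneous polynomial identities; matching weights forces most coefficients to vanish, and the surviving identities degenerate to something like $B_3 = 0$ or $1=0$. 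I expect the main obstacle to be bookkeeping: there are many coefficients ($B_8$, nine $C_i$, four $D_i$, plus the $T_{ij}$) and several rounds of differentiation before the contradiction surfaces, so the computation is best organized by homogeneity (bi-grade with respect to $(Z_1,Z_3)$, which $\partial^*$ and hence all these relations respect) to keep the linear-algebra steps tractable; this is exactly the kind of computation for which the accompanying Maple files are cited. Once the contradiction is reached in the case $B_3\ne0,\ B_7=0$, the case $B_3=0,\ B_7\ne0$ follows immediately by applying $\rho\circ\iota$, completing the proof that no $6$-symmetric structures occur in this branch.
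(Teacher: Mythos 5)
Your outline reproduces the paper's strategy: reduce by duality (your $\rho\circ\iota$ observation is the same move as the paper's ``by duality, take $B_3\neq 0$, $B_7=0$''), then normalize $(B_3,B_4)=(1,0)$ using $v_3$ and one scaling, leaving a one-dimensional residual structure algebra, and close up the structure equations to a contradiction. So the approach is the right one; the problems are in how you propose to finish.

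First, the mechanism that kills the remaining coefficients is not ``matching weights in weight-homogeneous identities.'' It is the $6$-symmetry hypothesis itself: after normalizing $(B_3,B_4)=(1,0)$ (which forces $z_2=-\frac{7}{3}z_1$, $v_3=0$), multiple transitivity with exactly six symmetries means the residual one-parameter structure algebra must act trivially on the now-constant invariants, so every coefficient of nonzero $z_1$-weight vanishes; this is precisely how one gets $B_8=0$ and then, from $0=d(B_3)=d(B_4)=d(B_7)=d(B_8)$ and differentiation of the forced relations, $\beta_3=\beta_4=\beta_8=\tau_2=\tau_3=\tau_4=\nu_3=0$, $a_{45}=0$, and $T_{ij}=0$ except $(i,j)\in\{(1,5),(5,1)\}$. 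Weight bookkeeping alone, without invoking the transitivity/constancy of the invariants, does not force any vanishing. Second, the contradiction you predict (``something like $B_3=0$ or $1=0$'') is not what occurs and a reader following your plan would not find it: all the scalar weight constraints are perfectly consistent with $B_3=1$. The inconsistency only appears at the level of $2$-forms, when one differentiates the forced relation $\nu_3=0$ and obtains $0=\varpi_1\wedge\varpi_4-(T_{15}+1)\varpi_2\wedge\varpi_5$, which is impossible since $\varpi_1\wedge\varpi_4$ and $\varpi_2\wedge\varpi_5$ are linearly independent (no choice of $T_{15}$ removes the first term). Since the lemma is exactly the assertion that this closing-up fails, leaving the decisive computation as an expectation, with the wrong kind of contradiction anticipated, is a genuine gap rather than a stylistic omission; the duality reduction and normalization portions of your plan are fine as they stand.
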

 
 \begin{proof} By duality, take $B_3 \neq 0$ and $B_7 = 0$.
 Normalizing $(B_3, B_4) = (1,0)$ forces $z_2 = -\frac{7}{3}z_1$, $v_3 = 0$. The structure algebra is reduced to $\diag(\frac{z_1}{6}, -\frac{7z_1}{6}, -\frac{z_1}{2}, \frac{3z_1}{2})$, and this acts trivially vertically under the 6 symmetry assumption.  Hence, $B_8=0$.  From $0 = d(B_3) = d(B_4) = d(B_7) = d(B_8)$,
 \[
 \zeta_2 = -\frac{7}{3} \zeta_1 + \frac{4}{3} \beta_3, \qquad  
 \nu_3 = \frac{1}{5} (\tau_2 - \beta_4), \qquad \beta_7 = 0, \qquad  \beta_8 = -\tau_3.
 \]
 All coefficients with nonzero (vertical) scaling weight with respect to $z_1$ must vanish.  Differentiating the relations on $\zeta_2, \nu_1, \nu_2, \nu_3, \tau_1, \tau_2, \tau_3, \tau_4, \tau_5$, we conclude from these weights that
 \begin{align*}
 \beta_3 = \beta_4 = \beta_8 = \tau_2 =\tau_3 = \tau_4 = \nu_3 = 0, \quad a_{45} = 0, \quad
  T_{ij} = 0,\quad (i,j) \not\in \{ (1,5), (5,1) \}
 \end{align*}
 But differentiating $\nu_3 = 0$ then yields the contradiction $0 = \varpi_1 \wedge \varpi_4 - (T_{15} + 1) \varpi_2 \wedge \varpi_5$.
 \end{proof}

 Thus, $B_3 = B_7 = 0$ for multiply transitive structures.  Now $0 = d(B_3) = d(B_7)$ implies:
 \[
 \beta_3 = B_4 \alpha_4, \quad \beta_7 = B_8 \alpha_4.
 \]
 Moreover, $B_4, B_8$ are relative invariants.
  \[
 \begin{array}{ccc}
 \mbox{Case} & \mbox{Condition} & \mbox{Bound on symmetry dimension} \\ \hline
 \mbox{(a)} & B_4 = B_8 = 0 & 8\\ 
 \mbox{(b)} & (B_4,B_8) \neq (0,0),\, B_4 B_8 = 0 & 7\\
 \mbox{(c)} & B_4 B_8 \neq 0 & 6
 \end{array}
 \]
 
 \begin{lemma}
 Any multiply transitive type N structure with normalization $A = y^4$ satisfies $B=0$.
 \end{lemma}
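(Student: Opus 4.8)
The plan is to reduce the statement to excluding cases (b) and (c) of the trichotomy just displayed, since case (a) is precisely $B = 0$. In both of the remaining cases I would argue exactly as in the preceding lemma: normalize the nonvanishing relative invariants among $B_4, B_8$ to constants, read off the linear constraints this imposes on the residual structure algebra, invoke the multiple-transitivity hypothesis to conclude that whatever structure algebra survives must act trivially in the vertical directions on every curvature coefficient (otherwise one more normalization drops the fiber dimension too far), and then differentiate the accumulated relations to propagate a cascade of vanishing coefficients and finally extract a contradictory $2$-form identity of the form $0 = (\text{nonzero }2\text{-form})$.

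For case (c), $B_4 B_8 \neq 0$: since $B_3 = B_7 = 0$ we have $\delta B_4 = -\tfrac14(5z_1+z_2)B_4$ and $\delta B_8 = -\tfrac14(z_1+5z_2)B_8$, so normalizing $B_4 = B_8 = 1$ forces $z_1 = z_2 = 0$ and leaves the $1$-dimensional structure algebra spanned by $v_3$. If any curvature coefficient carried a nonzero $v_3$-weight we could normalize it, trivialize the structure algebra, and bound the symmetry dimension by $5$, contradicting multiple transitivity; hence all curvature coefficients are $v_3$-invariant. Differentiating $B_4 = 1$, $B_8 = 1$, $z_1 = 0$, $z_2 = 0$ expresses $\zeta_1,\zeta_2$ and one of $\nu_2,\nu_3$ in terms of semibasic forms and the $\beta$-forms; substituting these back into the structure equations for $d\zeta_i$ and $d\nu_i$, together with $0 = d(B_4) = d(B_8)$, closes up only if a nonvanishing wedge of two $\varpi_i$ vanishes, which is the contradiction.

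For case (b), $(B_4,B_8) \neq (0,0)$ with $B_4 B_8 = 0$: by duality (the involution $\iota$, or $\rho\circ\iota$, both preserving type N) I may assume $B_4 \neq 0$ and $B_8 = 0$. Normalizing $B_4 = 1$ forces $z_2 = -5z_1$ (using $B_3 = 0$), leaving the $2$-dimensional structure algebra spanned by $v_3, z_1$, and differentiating $B_8 = 0$ gives $\beta_8 = 0$. One then examines the candidate further reductions: in any branch in which a normalization drops the fiber dimension to $0$ the symmetry dimension is at most $5$, and in the remaining branches one argues as in the previous lemma that all curvature coefficients carrying a nonzero residual scaling weight vanish, that $0 = d(B_3) = d(B_4) = d(B_7) = d(B_8)$ together with the weight bookkeeping kills most of the $T_{ij}$, the surviving $\beta$-forms, and $\nu_3$, and that differentiating the resulting expression for $\nu_3$ yields an identity $0 = \varpi_a \wedge \varpi_b - (\cdots)\varpi_c \wedge \varpi_d$ that cannot hold. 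Hence neither (b) nor (c) occurs, so $B = 0$.

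The main obstacle is case (b). Unlike case (c) and the preceding lemma, the residual structure algebra here is $2$-dimensional, so one must track two independent scaling weights simultaneously, and — crucially — reducing it to a $1$-dimensional algebra is not by itself a contradiction, since that still leaves room for a $6$-dimensional (hence multiply transitive) symmetry algebra. The real work is to organize the weight analysis so that \emph{every} branch of the reduction terminates in a genuine $2$-form obstruction rather than merely producing a further reduction, which is why I would lean heavily on the structure equations for $d\nu_1, d\nu_2, d\nu_3$ and the torsion-type relations in \eqref{E:N-rel1} to force the contradiction.
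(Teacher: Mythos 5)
Your proposal follows essentially the same route as the paper: normalize the relative invariants ($B_4=B_8=1$, resp.\ $B_4=1,\ B_8=0$ after using duality), note that this kills $z_1,z_2$ (resp.\ leaves the two-parameter algebra in $v_3,z_1$), invoke multiple transitivity to force the residual vertical action to be trivial so that all coefficients are constant, and then extract a contradiction from $d^2=0$/Bianchi identities. The only difference is in bookkeeping detail: the paper locates the case-(c) contradiction in $0=d^2\tau_1\wedge\varpi_3\wedge\varpi_4\neq 0$ after Bianchi identities fix $C_3=C_9=\tfrac{8}{3}$, $C_6=\tfrac{4}{9}$, and, like you, it only asserts (rather than exhibits) the more involved computation for the case $B_4\neq 0$, $B_8=0$.
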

 
 \begin{proof}  Suppose $B_4 B_8 \neq 0$.  Normalizing $B_4 = B_8 = 1$ forces $z_1 = z_2 = 0$.  Hence, $\zeta_i = Z_{ij} \varpi_j$.  For multiply transitive structures, $Z_{ij}$ are constant, $\varpi_1,..., \varpi_5, \nu_3$ linearly independent, and $v_3$ must act vertically trivially.  This forces $C_i = 0$ ($i \neq 3,6,9$), $D_1 = D_3 = 0$, 
 \[
 T_{23} = T_{32} = -\frac{16}{9}, \quad
 T_{22} = T_{33} = -\frac{16}{3}, \quad
 Z_{12} = -Z_{23} = 4, \quad
 Z_{13} = -Z_{22} = \frac{4}{3}, \quad
 a_{45} = -\frac{32}{9},
 \]
 and several more linear relations between $T_{ij}$ and $Z_{ij}$.  Since all $B,C,D$ coefficients must be constant, apply $d$ to get further relations.  Imposing Bianchi identities yields $C_3 = C_9 = \frac{8}{3}$ and $C_6 = \frac{4}{9}$.  A contradiction is then obtained from $0 = d^2 \tau_1 \wedge \varpi_3 \wedge \varpi_4 \neq 0$.
 
 The case $B_4 \neq 0$, $B_8=0$ is more involved, but similarly yields a contradiction.
 
 
%
%

 \end{proof}
 
 Given $B=0$, the conditions $0 = d^2 \nu_i$ imply $C_j = 0$ for $j \neq 3,6,9$, and
 \[
 a_{45}=C_6, \quad b_{42}=-C_3, \quad b_{43}=2 C_6, \quad
 b_{82}=-2 C_6, \quad b_{83}=C_9, \quad b_{81} = -b_{44}.
 \] 
 Now imposing $0 = d(B_4) = d(B_8)$ and $0 = d(\nu_2 - \alpha_4)$, we obtain $C_3 = C_6 = C_9 = 0$, so $C=0$, and
 \[
 T_{2i} = 0\,\, (i \neq 1), \quad
 T_{3j} = 0\,\, (j \neq 4), \quad
 T_{21} = b_{41}, \quad T_{34} = -b_{84},  \quad
 b_{44} = b_{45} = b_{85} = 0.
 \]
 For $i = 1,..., 9$, $0 = d(C_i)$ implies $\gamma_i = 0$.  Then $0 = d^2 \tau_1 = d^2\tau_2$ implies $D = 0$.  For $i = 1,...,4$, $0 = d(D_i)$ implies $\delta_i = 0$. Now $0 = d(\nu_1 - \frac{1}{4} (\zeta_1 + \zeta_2))$ implies relations among the $T_{ij}$.  We obtain:
 \begin{align*}
  &\tau_2 = T_{21} \varpi_1, \quad \tau_3 = -3 T_{43} \varpi_4, \quad
 \tau_5 = T_{51} \varpi_1 + T_{54} \varpi_4 + T_{55} \varpi_5,\\ 
 &\tau_1 = T_{11} \varpi_1 - \frac{T_{21}}{3} \varpi_2 + T_{41} \varpi_4 + \frac{2}{3} T_{51} \varpi_5, \quad
 \tau_4 = T_{41} \varpi_1 + T_{43} \varpi_3 + T_{44} \varpi_4 + \frac{2}{3} T_{54} \varpi_5,
 \end{align*}
 and all other $T_{ij}$ not appearing above are zero.  Differentiating the $\tau_i$-relations $(i=1,...,4)$, we obtain the vertical action:
 \begin{align*}
 \delta T_{21} &= -2 T_{21} z_1, \quad \delta T_{11} = \left( \frac{-3z_1+z_2}{2} \right) T_{11} + \frac{2}{3} T_{21} v_3\\
 \delta T_{43} &= -2 T_{43} z_2, \quad \delta T_{44} = \left( \frac{z_1 - 3 z_2}{2} \right) T_{44} + 2 T_{43} v_3
 \end{align*}
 
 \begin{lemma} With normalizations as above, we must have $T_{21} = T_{43} = T_{51} = T_{54} = T_{55} = 0$.
 \end{lemma}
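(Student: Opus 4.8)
The plan is to reduce everything to the behaviour of the five coefficients under the residual structure algebra $\langle v_3,z_1,z_2\rangle$. Since the vertical actions of $T_{21}$ and $T_{43}$ are already recorded ($\delta T_{21}=-2T_{21}z_1$, $\delta T_{43}=-2T_{43}z_2$), the first task is to do the same for $T_{51},T_{54},T_{55}$. Differentiate the relation $\tau_5=T_{51}\varpi_1+T_{54}\varpi_4+T_{55}\varpi_5$ using the structure equation $d\tau_5=-\tau_1\wedge\tau_3-\tau_2\wedge\tau_4-(\zeta_1+\zeta_2)\wedge\tau_5$ (note $K^0{}_3=0$ because $C=D=0$) together with the $d\varpi_i$ equations. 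Every form $\tau_1,\tau_2,\tau_3,\tau_4$ and $\nu_2=\alpha_4$ is semibasic, and none of the $d\varpi_i$ contains $\nu_3$, so the right-hand side is free of $\nu_3$; comparing the vertical parts, and using $\nu_1=\tfrac14(\zeta_1+\zeta_2)$, one reads off
\[
\delta T_{51}=-T_{51}\!\left(\tfrac74\zeta_1+\tfrac34\zeta_2\right),\qquad
\delta T_{54}=-T_{54}\!\left(\tfrac34\zeta_1+\tfrac74\zeta_2\right),\qquad
\delta T_{55}=-2T_{55}(\zeta_1+\zeta_2),
\]
with no $\nu_3$-terms. Thus all five of $T_{21},T_{43},T_{51},T_{54},T_{55}$ are relative invariants of the rank-two torus $\langle z_1,z_2\rangle$ (and transform trivially under $v_3$), and their weight vectors $(-2,0),(0,-2),(-\tfrac74,-\tfrac34),(-\tfrac34,-\tfrac74),(-2,-2)$ are pairwise non-proportional.

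Now suppose, for contradiction, that one of these five is nonzero. As is the case throughout this section, for a multiply transitive structure every curvature coefficient must have trivial vertical scaling weight on whatever structure algebra survives; so a nonzero invariant among the five forces its (nonzero) weight line in $\langle z_1,z_2\rangle$ to be killed. Because the five weights are pairwise independent, this leaves at most the $v_3$-direction (and, if only one of the five is nonzero, possibly one further line of $\langle z_1,z_2\rangle$) intact, so $\dim\cG_{\mathrm{red}}\le 6$. Normalizing the nonzero invariant(s) to $1$ and differentiating the normalization conditions then expresses the killed combinations of $\zeta_1,\zeta_2$ as semibasic forms plus multiples of $\nu_3$; substitute these back into the structure equations.

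It remains to run the Bianchi/integrability consequences in each surviving sub-case. The quickest come from comparing $d\tau_5$ with $d\varpi_5$: for instance if $T_{55}\ne 0$ (so $T_{21}=T_{43}=T_{51}=T_{54}=0$) then normalizing $T_{55}=1$ gives $\tau_5=\varpi_5$, and $d\varpi_5=\varpi_1\wedge\varpi_3+\varpi_2\wedge\varpi_4+(\zeta_1+\zeta_2)\wedge\varpi_5$ versus $d\tau_5=-(\zeta_1+\zeta_2)\wedge\tau_5$ yields the impossible identity $\varpi_1\wedge\varpi_3+\varpi_2\wedge\varpi_4+2(\zeta_1+\zeta_2)\wedge\varpi_5=0$, since the $\varpi_1\wedge\varpi_3$ term cannot be cancelled. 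The cases in which $T_{21}$ or $T_{43}$ is nonzero are more involved: after the reduction one pushes the reduced structure equations, the essential inputs being $d^2\varpi_5=0$ and $d^2\nu_3=0$ — where $d\nu_3=\tau_1\wedge\varpi_2-\tau_4\wedge\varpi_3+2\nu_1\wedge\nu_3+\varpi_1\wedge\varpi_4$ carries the only surviving curvature component $K^2{}_1=\varpi_1\wedge\varpi_4$ — until one either reaches an outright contradiction or forces $\nu_3$ (and any residual $\langle z_1,z_2\rangle$-direction) to act trivially, dropping $\dim\cG_{\mathrm{red}}$ to $5$ and hence the symmetry dimension to at most $5$, which contradicts multiple transitivity. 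The main obstacle is precisely this final step: it splits into several sub-cases according to which $T_{ij}$ is nonzero and what one-parameter direction remains, and each demands carrying the reduced structure equations far enough to exhibit the contradiction — a bookkeeping-heavy computation best carried out, as elsewhere in this section, with the accompanying Maple files. Once all sub-cases are excluded, $T_{21}=T_{43}=T_{51}=T_{54}=T_{55}=0$.
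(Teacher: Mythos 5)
Your overall strategy (scaling weights, normalization of a nonvanishing relative invariant, then pushing the structure equations to a contradiction) is the same method the paper uses, but as written your argument has a genuine gap: the decisive cases are not proved, only deferred. The only case you actually close is the one where $T_{55}$ alone is nonzero. The cases $T_{21}\neq 0$ (and, dually, $T_{43}\neq 0$) are exactly where the content of the lemma lies, and you leave them as ``a bookkeeping-heavy computation best carried out with the Maple files.'' In the paper these are handled explicitly and briefly: if $T_{21}T_{43}\neq 0$ one normalizes both, and then (using $\delta T_{11}$, which contains the term $\tfrac{2}{3}T_{21}v_3$) also normalizes $T_{11}=0$, trivializing the whole structure algebra and capping the symmetry dimension at $5$; if $T_{21}\neq 0$, $T_{43}=0$, normalizing $T_{21}=1$, $T_{11}=0$ leaves only $z_2$, which under the multiple-transitivity hypothesis must act vertically trivially, forcing $\zeta_1=\nu_3=0$ and the remaining $T$'s to vanish, whereupon $0=d\zeta_1=-2\,\varpi_1\wedge\varpi_2$ is an outright contradiction. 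Note that your scheme never uses the $v_3$-component of $\delta T_{11}$ (or $\delta T_{44}$); without it you cannot reduce the $v_3$-direction, and your symmetry-dimension counts come out wrong (your claim $\dim\cG_{\mathrm{red}}\le 6$ when only one invariant is nonzero should be $\le 7$). So the sub-case analysis you postpone is not routine bookkeeping that can be waved at; it is the proof.

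A second, smaller point: your weight computation for $T_{51},T_{54},T_{55}$ and the attendant case analysis are unnecessary. Once $T_{21}=T_{43}=0$, i.e.\ $\tau_2=\tau_3=0$, the structure equations give directly
\begin{align*}
0=d\tau_2=-\tau_5\wedge\varpi_4=-T_{51}\,\varpi_1\wedge\varpi_4-T_{55}\,\varpi_5\wedge\varpi_4,\qquad
0=d\tau_3=\tau_5\wedge\varpi_1=-T_{54}\,\varpi_1\wedge\varpi_4-T_{55}\,\varpi_1\wedge\varpi_5,
\end{align*}
so $T_{51}=T_{54}=T_{55}=0$ with no normalization and no appeal to multiple transitivity; this is how the paper concludes, and it subsumes the one contradiction you did write out ($\tau_5=\varpi_5$ versus $d\varpi_5$). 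In summary: the route is right in spirit, but to be a proof you must carry out the $T_{21}\neq0$ and $T_{43}\neq0$ reductions to their explicit contradictions (as above), and you can delete the weight analysis of the $\tau_5$-coefficients in favour of the two-line consequence of $\tau_2=\tau_3=0$.
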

 
 \begin{proof}
 If $T_{21} T_{43} \neq 0$, then there are at most 5 symmetries.  If $T_{21} \neq 0$ and $T_{43} = 0$, normalize $T_{21} = 1$ and $T_{11} = 0$, and write $\zeta_1 = Z_{1j} \varpi_j$, $\nu_3 = V_{3j} \varpi_j$.  We have at most 6 symmetries, and for 6 the residual structure algebra (generated by $z_2$) must act vertically trivially.  This forces $\zeta_1 = \nu_3 = 0$ and $T_{41} = T_{44} = T_{51} = T_{54} = T_{55} = 0$.  But $0 = d\zeta_1 = -2 \varpi_1 \wedge \varpi_2 \neq 0$ yields a contradiction.  The $T_{43} \neq 0$, $T_{21} = 0$ case similarly yields a contradiction.  Thus, we conclude that $T_{21} = T_{43} = 0$ and hence $\tau_2 = \tau_3 = 0$.  From $0 = d\tau_2 = d\tau_3$, we obtain $T_{51} = T_{54} = T_{55} = 0$.
 \end{proof}
 
 SUMMARY: For multiply transitive type $N$ structures, we have reduced to an 8-dimensional subbundle of the original Cartan bundle (given the normalizations $A = y^4$ and $\alpha_5 = 0$).
 \begin{itemize}
 \item Curvature coefficients: All $A,B,C,D$ are zero, except $A_5 = 1$.
 \item Coframe: $\varpi_1,...,\varpi_5, \zeta_1, \zeta_2, \nu_3$.  Relations on other forms:
 \begin{align*}
 \nu_1 = \frac{1}{4} (\zeta_1 + \zeta_2), \quad \nu_2 = \tau_2 = \tau_3 = \tau_5 = 0, \quad
 \tau_1 = T_{11} \varpi_1 + T_{41} \varpi_4, \quad
 \tau_4 = T_{41} \varpi_1 + T_{44} \varpi_4.
 \end{align*}
 \item Among $\alpha_i,\beta_j,\gamma_k,\delta_\ell$, the only possibly nontrivial forms are $\beta_4 = \tau_2$ and $\beta_8 = -\tau_3$.
 \item All Bianchi identities are satisfied, e.g. $0 = d^2 \nu_1$, etc.
 \item Structure group: $\bmat{r_1 & 0 & 0 & 0\\ 0 & r_2 & 0 & 0\\ 0 & s & r_3 & 0 \\ 0 & 0 & 0 & r_4} $, where $r_1 r_2 r_3 r_4 = 1$, $r_1 r_3{}^2 = r_2{}^2 r_4$, i.e. $ r_1 = \frac{1}{r_2{}^4 r_4{}^3}$,\, $r_3 = r_2{}^3 r_4{}^2.$
 This induces  $\left\{ \begin{array}{l} 
 \tilde{T}_{11} = \frac{r_2{}^2}{r_1{}^2} T_{11} = r_2{}^{10} r_4{}^6 T_{11}\\
 \tilde{T}_{41} = \frac{r_2 r_4}{r_1 r_3} T_{41} = r_2{}^2 r_4{}^2 T_{41} \\
 \tilde{T}_{44} = \frac{r_4{}^2}{r_3{}^2} T_{44} = \frac{1}{r_2{}^6 r_4{}^2} T_{44}
  \end{array} \right. \qRa
 \left\{ \begin{array}{l} 
 \delta(T_{11}) = \left( \frac{z_2 - 3 z_1}{2} \right) T_{11} \\
 \delta(T_{41}) = -\left( \frac{z_1+z_2}{2} \right)T_{41} \\ 
 \delta(T_{44}) = \left( \frac{z_1 - 3 z_2}{2} \right) T_{44} 
 \end{array} \right.$.
 \item  Let $\Phi$ be the duality transformation:
 \begin{align*}
 &\varpi_1 \leftrightarrow \varpi_4, \quad \varpi_2 \leftrightarrow \varpi_3, \quad \varpi_5 \mapsto -\varpi_5, \quad \tau_1 \leftrightarrow \tau_4, \quad \tau_2 \leftrightarrow \tau_3, \quad \tau_5 \mapsto -\tau_5,\\
 & \zeta_1 \leftrightarrow \zeta_2, \quad \nu_1 \mbox{ fixed}, \quad
 \nu_2 \mapsto -\nu_2, \quad \nu_3 \mapsto -\nu_3.
 \end{align*}
 This preserves $A$ and induces $(\tau_1,\tau_4) \mapsto (\tau_4, \tau_1)$ and so $(T_{11}, T_{41}, T_{44}) \mapsto (T_{44}, T_{41}, T_{11})$.
 \end{itemize}
 
 The case analysis based on the relative invariants $T_{11}, T_{41}, T_{44}$ is straightforward.  Table \ref{T:N-classify} summarizes this classification and Table~\ref{T:N-streq} contains the structure equations obtained.
 
 \begin{table}[ht]
 \[
 \begin{array}{c||c|c|c}
 & T_{11} = T_{44} = 0 & T_{11} \neq 0, T_{44} = 0 & T_{11} T_{44} \neq 0\\ \hline\hline
 T_{41} = 0 & \mbox{N.8} & \mbox{N.7-1} & \mbox{N.6-2} \\
 T_{41} \neq 0 & \mbox{N.7-2} & \mbox{none} & \mbox{N.6-1}
 \end{array}
 \]
 \caption{Classification of multiply transitive type N structures}
 \label{T:N-classify}
 \end{table}
 
 Some care is required to deduce any redundancy of parameters appearing in the structure equations.  Consider the case $T_{11} \neq 0$.  Normalize $T_{11} = 1$, so $r_4{}^6= \frac{1}{r_2{}^{10}}$, and $\zeta_2 = 3 \zeta_1 + Z_{21} \varpi_1 + Z_{24} \varpi_4$ is forced.  Quotienting the structure group by $\diag(r_2,r_2,r_2,r_2)$ (since these act trivially), we may WLOG take the diagonal to be $\diag\left(\frac{1}{r_2{}^5 r_4{}^3},1, r_2{}^2 r_4{}^2, \frac{r_4}{r_2}\right)$.  Let $Q$ be the residual group below.
 
 \begin{enumerate}
 \item \framebox{N.6-1}: $T_{41} = 1$ and $\zeta_1 = Z_{11} \varpi_1 + Z_{14} \varpi_4$.  Then $Q \cong \bbZ_2$, generated by $E = \diag(-1,1,1,-1)$.
 \[
 \tilde{T}_{44} = T_{44}, \quad \tilde{Z}_{j1} = - Z_{j1}, \quad \tilde{Z}_{j4} = - Z_{j4} \qquad (j=1,2).
 \]
 Let $Z_{24} = 4a$, so $\pm a$ yield the same structure.  We must $a(a^2+1) \neq 0$ and
 \[
 Z_{11} = \frac{1-2a^2}{a}, \quad Z_{14} = \frac{-a(2a^2+ 1)}{a^2 + 1}, \quad
 T_{44} = \frac{a^4}{(a^2 + 1)^2}, \quad Z_{21} = \frac{4(a^2 - 1)}{a}.
 \]
 Thus, $a^2 \in \bbC \backslash \{ 0, -1 \}$ is the essential parameter.
 \item \framebox{N.6-2}: $T_{44} = 1$ and $\zeta_1 = -\frac{3}{8} Z_{21} \varpi_1 + Z_{14} \varpi_4$.  Write $Z_{21} = -4a$ and $Z_{14} = \frac{b}{2}$.  Here, $Q \cong \bbZ_2 \times \bbZ_2$ generated by $M_1 = \diag(-1,1,1,-1)$ and $M_2 = \diag(1,1,-1,1)$.
 Then:
 \[
 M_1: \quad (\tilde{a},\tilde{b}) = (-a,-b); \qquad
 M_2: \quad (\tilde{a},\tilde{b}) = (a,-b).
 \]
 Thus, $(a^2,b^2) \in \bbC \times \bbC$ is the essential parameter.
 
 
 \item \framebox{N.7-1}: $T_{41} = T_{44} = 0$, $\zeta_2 = 4a \varpi_1 + 3 \zeta_1$.  Here, $Q$ has diagonal $\diag\left(\frac{1}{r_2{}^5 r_4{}^3},1, r_2{}^2 r_4{}^2, \frac{r_4}{r_2}\right)$, with $\epsilon := r_2{}^5 r_4{}^3 = \pm 1$.  Induced action: $\tilde{a} = \epsilon a$.  Thus, $a^2 \in \bbC$ is the essential parameter.
\end{enumerate}
 All type N structure equations are given in Table~\ref{T:N-streq}.
  \begin{footnotesize}
 \begin{longtabu}[HT]{|c|>{$}c<{$}|>{$}l<{$}|>{$}l<{$}|}
 \caption{Multiply transitive type N structure equations}\label{T:N-streq}\\
  \hline
 Model & \mbox{SD} & \multicolumn{1}{c|}{\mbox{Structure equations}} & \multicolumn{1}{c|}{\mbox{Embedding into Cartan bundle}}\\ \hline
\endfirsthead
 \caption[]{Multiply transitive type N structure equations (continued)}\\
  \hline
 Model & \mbox{SD} & \multicolumn{1}{c|}{\mbox{Structure equations}} & \multicolumn{1}{c|}{\mbox{Embedding into Cartan bundle}}\\ \hline
 \endhead
 N.8 & \cmark &
  \begin{array}{l@{\,}l}
 d\varpi_1&=\frac{3}{4}\zeta_1 \wedge \varpi_1 - \frac{1}{4}\zeta_2 \wedge \varpi_1\\
 d\varpi_2&=\frac{5}{4}\zeta_1 \wedge \varpi_2 + \frac{1}{4}\zeta_2 \wedge \varpi_2 - \nu_3 \wedge \varpi_1\\
 d\varpi_3&=\frac{1}{4}\zeta_1 \wedge \varpi_3 + \frac{5}{4}\zeta_2 \wedge \varpi_3 + \nu_3 \wedge \varpi_4\\
 d\varpi_4&=-\frac{1}{4}\zeta_1 \wedge \varpi_4 + \frac{3}{4}\zeta_2 \wedge \varpi_4\\
d\varpi_5&=\zeta_1 \wedge \varpi_5 + \zeta_2 \wedge \varpi_5 + \varpi_1 \wedge \varpi_3 + \varpi_2 \wedge \varpi_4\\
 d\nu_3&=\frac{1}{2} \zeta_1 \wedge \nu_3 + \frac{1}{2}\zeta_2 \wedge \nu_3 + \varpi_1 \wedge \varpi_4\\
 d\zeta_1&=0\\
 d\zeta_2&=0\\
 \end{array} &
 \begin{array}{l@{\,\,}l}
 \nu_1 &= \frac{1}{4} (\zeta_1 + \zeta_2)\\
 \nu_2 &= 0\\
 \tau_1 &= \tau_2 = \tau_3 = \tau_4 = \tau_5 = 0
 \end{array}
 \\ \hline
 N.7-1 & \xmark &
  \begin{array}{l@{\,}l}
 d\varpi_1 &= 0\\
 d\varpi_2 &= 2\zeta_1 \wedge \varpi_2 - \nu_3 \wedge \varpi_1 + a\,\varpi_1 \wedge \varpi_2\\
 d\varpi_3 &= 4\zeta_1 \wedge \varpi_3 + \nu_3 \wedge \varpi_4 + \varpi_1 \wedge \varpi_5 + 5 a\,\varpi_1 \wedge \varpi_3 \\
 d\varpi_4 &= 2\zeta_1 \wedge \varpi_4 + 3 a\,\varpi_1 \wedge \varpi_4\\
d\varpi_5 &= 4\zeta_1 \wedge \varpi_5 + \varpi_1 \wedge \varpi_3 + \varpi_2 \wedge \varpi_4 + 4a\,\varpi_1 \wedge \varpi_5\\
 d\nu_3 &= 2\zeta_1 \wedge \nu_3 - 2a\,\nu_3 \wedge \varpi_1 + \varpi_1 \wedge \varpi_2  + \varpi_1 \wedge \varpi_4\\
 d\zeta_1 &= 0\\
 \end{array} &
 \begin{array}{l@{\,\,}l}
 \zeta_2 &= 3\zeta_1 + 4 a \varpi_1\\
 \nu_1 &= \zeta_1 + a \varpi_1\\
 \tau_1 &= \varpi_1\\
 \nu_2 &= \tau_2 = \tau_3 = \tau_4 = \tau_5 = 0\\
 \multicolumn{2}{c}{\mbox{(Parameter: $a^2 \in \bbC$)}}
 \end{array}
 \\ \hline
 N.7-2 & \cmark &
  \begin{array}{l@{\,}l}
 d\varpi_1 &= \zeta_1 \wedge \varpi_1\\
 d\varpi_2 &= \zeta_1 \wedge \varpi_2 - \nu_3 \wedge \varpi_1 - \varpi_1 \wedge \varpi_5\\
 d\varpi_3 &= -\zeta_1 \wedge \varpi_3 + \nu_3 \wedge \varpi_4 + \varpi_4 \wedge \varpi_5\\
 d\varpi_4 &= -\zeta_1 \wedge \varpi_4\\
d\varpi_5 &= \varpi_1 \wedge \varpi_3 + \varpi_2 \wedge \varpi_4\\
 d\nu_3 &= -\varpi_2 \wedge \varpi_4 - \varpi_1 \wedge \varpi_3 + \varpi_1 \wedge \varpi_4\\
 d\zeta_1 &= 2 \varpi_1 \wedge \varpi_4\\
 \end{array} &
 \begin{array}{l@{\,\,}l}
 \zeta_2 &= -\zeta_1\\
 \tau_1 &= \varpi_4 \\
 \tau_4 &= \varpi_1\\
 \nu_1 &= \nu_2 = \tau_2 = \tau_3 = \tau_5 = 0
 \end{array}
 \\ \hline
 N.6-1 & \cmark &
  \begin{array}{l@{\,}l}
d\varpi_1 &= a \varpi_1 \wedge \varpi_4\\
d\varpi_2 &= -\nu_3 \wedge \varpi_1-\frac{(3a^2-1)}{a}\varpi_1 \wedge \varpi_2 - \varpi_1 \wedge \varpi_5 \\
&\qquad + \frac{a(3a^2+1)}{(a^2+1)}\varpi_2 \wedge \varpi_4 - \frac{a^4}{(a^2+1)^2}\varpi_4 \wedge \varpi_5\\
d\varpi_3 &= \nu_3 \wedge \varpi_4 - \frac{(3a^2+1)}{a}\varpi_1 \wedge \varpi_3 + \varpi_1 \wedge \varpi_5 \\
&\qquad + \frac{a(3a^2-1)}{(a^2+1)}\varpi_3 \wedge \varpi_4 +\varpi_4 \wedge \varpi_5\\
d\varpi_4 &= -\frac{(a^2+1)}{a}\varpi_1 \wedge \varpi_4\\
d\varpi_5 &= \varpi_1 \wedge \varpi_3 + \varpi_2 \wedge \varpi_4 - 4a\left( \varpi_1 + \frac{a^2}{(a^2+1)}\varpi_4 \right) \wedge \varpi_5\\
d\nu_3 &= 2a \nu_3 \wedge \left( \varpi_1 + \frac{a^2}{(a^2+1)} \varpi_4 \right) + \varpi_1 \wedge \varpi_2 - \varpi_1 \wedge \varpi_3 \\
&\qquad + \varpi_1 \wedge \varpi_4 - \varpi_2 \wedge \varpi_4 +\frac{a^4}{(a^2+1)^2}\varpi_3 \wedge \varpi_4
 \end{array} &
 \begin{array}{l@{\,\,}l}
 \zeta_1 &= -\frac{(2 a^2-1)}{a}\varpi_1 - \frac{a(2 a^2+1)}{(a^2+1)} \varpi_4 \\
 \zeta_2 &= -\frac{(2a^2+1)}{a}\varpi_1 - \frac{a(2 a^2-1)}{(a^2+1)} \varpi_4\\
 \nu_1 &= -a\left( \varpi_1 + \frac{a^2}{(a^2+1)} \varpi_4 \right)\\
 \tau_1 &= \varpi_1+\varpi_4\\
 \tau_4 &= \varpi_1 + \frac{a^4}{(a^2+1)^2} \varpi_4 \\
 \nu_2 &= \tau_2 = \tau_3 = \tau_5 = 0\\ 
 \multicolumn{2}{c}{\mbox{(Parameter: $a^2 \in \bbC \backslash \{ 0, -1 \}$)}}
 \end{array}
 \\ \hline
N.6-2 & \begin{array}{l} \cmark : \\
a^2 = b^2\\ \xmark : \\
a^2 \neq b^2 \end{array} & 
  \begin{array}{l@{\,}l}
 d\varpi_1 &= 0\\
 d\varpi_2 &= -\nu_3 \wedge \varpi_1 + 2a\varpi_1 \wedge \varpi_2 - b\varpi_2 \wedge \varpi_4 - \varpi_4 \wedge \varpi_5 \\
 d\varpi_3 &= +\nu_3 \wedge \varpi_4 + a\varpi_1 \wedge \varpi_3 - 2b\varpi_3 \wedge \varpi_4 + \varpi_1 \wedge \varpi_5 \\
 d\varpi_4 &= 0\\
 d\varpi_5 &= \varpi_1 \wedge \varpi_3 + \varpi_2 \wedge \varpi_4 + 2a\varpi_1 \wedge \varpi_5 + 2b\varpi_4 \wedge \varpi_5 \\
 d\nu_3 &= -\nu_3 \wedge (a\varpi_1 + b\varpi_4) + \varpi_1 \wedge \varpi_2 \\
 &\qquad+ \varpi_3 \wedge \varpi_4 + \varpi_1 \wedge \varpi_4
 \end{array} &
 \begin{array}{l@{\,\,}l}
 \zeta_1 &= \frac{3a}{2} \varpi_1 + \frac{b}{2} \varpi_4\\
 \zeta_2 &= \frac{a}{2} \varpi_1 + \frac{3b}{2} \varpi_4\\
 \nu_1 &= \frac{a}{2} \varpi_1 + \frac{b}{2} \varpi_4\\
 \tau_1 &= \varpi_1, \, \tau_4 = \varpi_4\\
 \nu_2 &= \tau_2 = \tau_3 = \tau_5 = 0\\
 \multicolumn{2}{c}{\mbox{(Parameter: $(a^2,b^2) \in \bbC \times \bbC$)}}
 \end{array}
 \\ \hline
  \end{longtabu}
 \end{footnotesize}

 \subsection{Type $D$ reduction}

 Normalize $A = 6 x^2 y^2$, i.e. $A_3 = 1, A_5 = A_4 = A_2 = A_1 = 0$.  Now $0 = d(A_i)$ implies:
 \begin{align} \label{E:D-rel0}
 \nu_2 = \frac{1}{3} \alpha_2, \quad \nu_3 = \frac{1}{3} \alpha_4, \quad \zeta_2 = -\zeta_1 + \alpha_3, \quad \alpha_1 = \alpha_5 = 0.
 \end{align}
 Differentiating the $\zeta_2$-relation above yields the vertical action on coefficients in $\alpha_3 = a_{3j} \varpi_j$:
 \begin{align*}
 \delta a_{31} &= a_{31} (v_1 - z_1) - t_1, \qquad
 \delta a_{32} = -a_{32} (v_1 + z_1) - t_2, \qquad
 \delta a_{33} = a_{33} (-v_1 + z_1) - t_3, \\
 \delta a_{34} &= a_{34} (v_1 + z_1) - t_4, \qquad
 \delta a_{35} = a_{32} t_4 + a_{31} t_3 - a_{34} t_2 - a_{33} t_1 - 2 t_5.
 \end{align*}
 Normalize $\alpha_3 = 0$, so $\tau_i = T_{ij} \varpi_j$.  We have reduced to the 2-dimensional structure algebra $\diag\left(  \frac{z_1}{2}, v_1 - \frac{z_1}{2}, -v_1 - \frac{z_1}{2}, \frac{z_1}{2} \right)$, so all type D structures admit at most seven symmetries.  Using duality and the $G_0$-map $(x,y) \mapsto (y,-x)$, we can assume that $B_1$ or $B_2$ is nonzero, or $B = 0$.


 For the 7-symmetry case, the 2-dimensional structure algebra must act trivially.  This forces:
 \begin{itemize}
 \item only $C_5$ (necessarily constant) to survive among $B,C,D$ coefficients;
 \item $\alpha_2 = \alpha_4 = 0$ (so $\nu_2 = \nu_3 = 0$);
 \item all $T_{ij}$ to vanish except $T_{13}, T_{24}, T_{31}, T_{42}, T_{55}$ (necessarily constants).
 \end{itemize}
 From $0 = d\nu_2 = d(\zeta_1+\zeta_2) = d(\tau_1 - T_{13} \varpi_3) = d(\tau_2 - T_{24} \varpi_4)$, we obtain
 \[
 T_{31} = T_{13} = C_5 - \frac{1}{2}, \qquad T_{42} = T_{24} = -C_5 - \frac{1}{2}, \qquad T_{55} = (C_5)^2+\frac{1}{4}.
 \]
 Now, $0 = d(B_i) = d(C_j) = d(D_k)$ forces $\beta_i = 0$ ($i=1,4,5,8$), and $\gamma_j = 0$, and
 \begin{align*}
 (\beta_2,\beta_3,\beta_6,\beta_7) &= (\tau_2, \tau_1, -\tau_3, \tau_4), \qquad
 (\delta_1,\delta_2,\delta_3,\delta_4) = 2C_5 (\beta_2,\beta_3,\beta_6,\beta_7).
 \end{align*}
 This yields model D.7.  The 2-dimensional structure group is generated by $\diag(r_1,\frac{r_2}{r_1},\frac{1}{r_1 r_2}, r_1)$  $(r_1,r_2 \in \bbC^\times)$, along with $\diag(e^{i\pi/4},e^{i\pi/4},-e^{i\pi/4},e^{i\pi/4})$ and $\diag\left(1,\begin{bmatrix} 0 & 1\\ -1 & 0 \end{bmatrix},1\right)$.\footnote{The latter two correspond to $(x,y) \mapsto (x,-y)$ and $(x,y) \mapsto (-y,x)$.}  Only this last transformation acts non-trivially on $C_5$, i.e. $C_5 \mapsto -C_5$.  Thus, $(C_5)^2 \in \bbC$ is the essential parameter.
 
 The 6-symmetry case proceeds similarly, but is very tedious, particularly for the $B_2 \neq 0$ case that leads to model D.6-2.  All type D structure equations are given in Table~\ref{T:D-streq}.

 \begin{footnotesize}
 \begin{longtabu}{|>{$}c<{$}|>{$}c<{$}|>{$}l<{$}|>{$}l<{$}|}  
\caption{Multiply transitive type D structure equations}\label{T:D-streq}\\
 \hline
 \mbox{Model} & \mbox{SD} & \multicolumn{1}{c|}{\mbox{Structure equations}} & \multicolumn{1}{c|}{\mbox{Embedding into Cartan bundle}}\\ 
 \hline
 \endhead
 \begin{array}{@{}c@{}}\mbox{D.7} \\ (a^2 \in \bbC) \end{array}& \cmark &
 \begin{array}{l@{\,}l}
 d\varpi_1&= (\zeta_1 - \nu_1 + (a-\frac{1}{2})\varpi_5)  \wedge \varpi_1\\
 d\varpi_2&= (\zeta_1 + \nu_1 - (a+\frac{1}{2})\varpi_5) \wedge \varpi_2\\
 d\varpi_3&= (-\zeta_1 + \nu_1 - (a-\frac{1}{2})\varpi_5) \wedge \varpi_3\\
 d\varpi_4&= (-\zeta_1 - \nu_1 + (a+\frac{1}{2})\varpi_5) \wedge \varpi_4\\
 d\varpi_5&= \varpi_1 \wedge \varpi_3 + \varpi_2 \wedge \varpi_4\\
 d\zeta_1 &= (2 a-1)\varpi_1 \wedge \varpi_3 - (2 a+1)\varpi_2 \wedge \varpi_4\\
 d\nu_1 &= (-a+\frac{3}{2})\varpi_1 \wedge \varpi_3 - (a+\frac{3}{2})\varpi_2 \wedge \varpi_4
 \end{array} & \begin{array}{l@{\,\,}l}
 \zeta_2 &= -\zeta_1, \,
 \nu_2 = \nu_3 = 0 \\ 
 \tau_1 &= (a - \frac{1}{2}) \varpi_3 \\
 \tau_2 &= -(a+\frac{1}{2}) \varpi_4 \\
 \tau_3 &= (a - \frac{1}{2}) \varpi_1 \\
 \tau_4 &= -(a+\frac{1}{2}) \varpi_2 \\
 \tau_5 &= (a^2 + \frac{1}{4}) \varpi_5
 \end{array}\\ \hline
\mbox{D.6-1} & \cmark &
 \begin{array}{l@{\,}l}
 d\varpi_1&= (3 \varpi_5 - 4 \nu_1) \wedge \varpi_1\\
 d\varpi_2&= (\frac{3}{2} \varpi_5 - 2 \nu_1) \wedge \varpi_2 + \sqrt{2}\, \varpi_1 \wedge \varpi_4\\
 d\varpi_3&= (4\nu_1 - 3 \varpi_5) \wedge \varpi_3\\
 d\varpi_4&= (2\nu_1 - \frac{3}{2} \varpi_5) \wedge \varpi_4 + \sqrt{2}\, \varpi_3 \wedge \varpi_2\\
 d\varpi_5&= \varpi_1 \wedge \varpi_3 + \varpi_2 \wedge \varpi_4\\
 d\nu_1 &= -\frac{1}{4} \varpi_1 \wedge \varpi_3 + \frac{3}{4} \varpi_2 \wedge \varpi_4
 \end{array} & \begin{array}{l@{\,\,}l} 
 \nu_2 &= -\sqrt{2} \varpi_2,\, \nu_3 = \sqrt{2} \varpi_4\\
 \zeta_2 &= -\zeta_1 = -\frac{7}{4} \varpi_5 + 3 \nu_1\\
 \tau_1 &= \frac{5}{4} \varpi_3,\, \tau_2 = -\frac{1}{4} \varpi_4\\
 \tau_3 &= \frac{5}{4} \varpi_1,\, \tau_4 = -\frac{1}{4} \varpi_2\\
 \tau_5 &= \frac{13}{16} \varpi_5
 \end{array}\\ \hline
 \begin{array}{@{}c@{}} \mbox{D.6-2}\\ (a \neq 1, \frac{2}{3}) \end{array} & \cmark &
 \begin{array}{l@{\,}l}
 d\varpi_1&= -2\nu_1 \wedge \varpi_1 + (-a + \frac{2}{3}) \varpi_1 \wedge \varpi_2 \\
 &\qquad - \frac{9a}{(3a-2)(a-1)} \varpi_1 \wedge \varpi_4 
  - \frac{(3a+5)}{2(a-1)} \varpi_1 \wedge \varpi_5\\
 d\varpi_2&=  -\frac{6}{(3a-2)(a-1)} \varpi_1 \wedge \varpi_3 - \frac{9a}{(3a-2)(a-1)} \varpi_2 \wedge \varpi_4 \\
 &\qquad - \frac{2}{a-1} \varpi_2 \wedge \varpi_5 + \frac{18}{(3a-2)(a-1)^2} \varpi_4 \wedge \varpi_5 \\
 d\varpi_3&= 2 \nu_1 \wedge \varpi_3 - a \varpi_2 \wedge \varpi_3 + \frac{3}{a-1} \varpi_3 \wedge \varpi_4  \\
 &\qquad + \frac{3a+5}{2(a-1)} \varpi_3 \wedge \varpi_5\\
 d\varpi_4&= -\frac{2}{3} \varpi_1 \wedge \varpi_3 - a \varpi_2 \wedge \varpi_4 \\
 &\qquad + (\frac{4}{9} - \frac{2}{3} a) \varpi_2 \wedge \varpi_5 + \frac{2}{a-1} \varpi_4 \wedge \varpi_5\\
 d\varpi_5&= \varpi_1 \wedge \varpi_3 + \varpi_2 \wedge \varpi_4\\
 d\nu_1 &= \frac{9a^2-15a-2}{4(3a-2)(a-1)} \varpi_1 \wedge \varpi_3 - \frac{27a^2-21a+10}{4(3a-2)(a-1)} \varpi_2 \wedge \varpi_4 \\
 &\qquad - \frac{2(3a-1)}{3(a-1)} \varpi_2 \wedge \varpi_5 + \frac{6(3a-1)}{(a-1)^2 (3a-2)} \varpi_4 \wedge \varpi_5
 \end{array} & \begin{array}{l@{\,\,}l}
 \zeta_2 &= -\zeta_1 = \nu_1 - a \varpi_2 - \frac{9a}{(a-1)(3a-2)} \varpi_4 \\
 &\qquad\qquad - \frac{15 a^2+23 a-14}{4(a-1)(3a-2)} \varpi_5\\
 \nu_2 &= -\frac{2}{3} \varpi_1\\
 \nu_3 &= -\frac{6}{(a-1)(3a-2)} \varpi_3\\
 \tau_1 &= \frac{3a^2-5a-6}{4(a-1)(3a-2)} \varpi_3\\
 \tau_2 &= (\frac{4}{9} - \frac{2}{3} a) \varpi_2 - \frac{15 a^2 - a + 2}{4(a-1)(3a-2)} \varpi_4 \\
 &\qquad -\frac{2(3a+1)}{3(a-1)} \varpi_5\\
 \tau_3 &= \frac{3a^2-5a-6}{4(a-1)(3a-2)} \varpi_1\\
 \tau_4 &= -\frac{15a^2-a+2}{4(a-1)(3a-2)} \varpi_2 - \frac{18}{(3a-2)(a-1)^2} \varpi_4 \\
 &\qquad - \frac{6(3a+1)}{(3a-2)(a-1)^2} \varpi_5\\
 \tau_5 &= -\frac{3a+1}{3(a-1)}\varpi_2 - \frac{3(3a+1)}{(3a-2)(a-1)^2} \varpi_4 \\
 &\qquad + \frac{117 a^4-462 a^3-407 a^2+604 a-44}{16(a-1)^2 (3a-2)^2} \varpi_5
 \end{array}\\ \hline
 \begin{array}{@{}c@{}}\mbox{D.6-3} \\ (a \neq 0) \end{array}& \cmark &
 \begin{array}{l@{\,}l}
 d\varpi_1&= \frac{3}{2} \varpi_1 \wedge \varpi_5 - \nu_1 \wedge \varpi_1 - \frac{1}{2} a \varpi_4 \wedge \varpi_5\\
 d\varpi_2&= \frac{3}{2} \varpi_2 \wedge \varpi_5 + \nu_1 \wedge \varpi_2 - \frac{1}{2} a \varpi_3 \wedge \varpi_5\\
 d\varpi_3&= \frac{1}{2} a \varpi_2 \wedge \varpi_5 - \frac{3}{2} \varpi_3 \wedge \varpi_5 + \nu_1 \wedge \varpi_3\\
 d\varpi_4&= \frac{1}{2} a \varpi_1 \wedge \varpi_5 - \frac{3}{2} \varpi_4 \wedge \varpi_5 - \nu_1 \wedge \varpi_4\\
 d\varpi_5&= \varpi_1 \wedge \varpi_3 + \varpi_2 \wedge \varpi_4\\
 d\nu_1 &= -\frac{1}{2} a \varpi_1 \wedge \varpi_2 + \frac{3}{2} \varpi_1 \wedge \varpi_3 \\
 & \qquad - \frac{3}{2} \varpi_2 \wedge \varpi_4 + \frac{1}{2} a \varpi_3 \wedge \varpi_4
 \end{array} & \begin{array}{l@{\,\,}l} 
 \zeta_2 &= -\zeta_1 = \varpi_5,\,\,  \nu_2 = \nu_3 = 0\\
 \tau_1 &= \frac{1}{2} a \varpi_2- \frac{1}{2} \varpi_3\\
 \tau_2 &= \frac{1}{2} a \varpi_1 - \frac{1}{2} \varpi_4 \\
 \tau_3 &= -\frac{1}{2} \varpi_1 + \frac{1}{2} a \varpi_4 \\
 \tau_4 &= -\frac{1}{2} \varpi_2 + \frac{1}{2} a \varpi_3 \\
 \tau_5 &= \frac{1}{4}( 1 - a{}^2) \varpi_5
 \end{array}\\ \hline
\mbox{D.6-4} & \xmark &
 \begin{array}{l@{\,}l}
 d\varpi_1&= \frac{3}{2} \varpi_1 \wedge \varpi_5 - \nu_1 \wedge \varpi_1\\
 d\varpi_2&= \frac{3}{2} \varpi_2 \wedge \varpi_5 + \nu_1 \wedge \varpi_2\\
 d\varpi_3&= -\frac{1}{2} \varpi_2 \wedge \varpi_5 - \frac{3}{2} \varpi_3 \wedge \varpi_5 + \nu_1 \wedge \varpi_3\\
 d\varpi_4&= -\frac{1}{2} \varpi_1 \wedge \varpi_5 - \frac{3}{2} \varpi_4 \wedge \varpi_5 - \nu_1 \wedge \varpi_4\\
 d\varpi_5&= \varpi_1 \wedge \varpi_3 + \varpi_2 \wedge \varpi_4\\
 d\nu_1 &= \frac{1}{2} \varpi_1 \wedge \varpi_2 + \frac{3}{2} \varpi_1 \wedge \varpi_3 - \frac{3}{2} \varpi_2 \wedge \varpi_4
 \end{array} & \begin{array}{l@{\,\,}l} 
 \zeta_2 &= -\zeta_1 = \varpi_5,\,\,  \nu_2 = \nu_3 = 0\\
 \tau_1 &= -\frac{1}{2} \varpi_2- \frac{1}{2} \varpi_3\\
 \tau_2 &= -\frac{1}{2} \varpi_1 - \frac{1}{2} \varpi_4 \\
 \tau_3 &= -\frac{1}{2} \varpi_1\\
 \tau_4 &= -\frac{1}{2} \varpi_2\\
 \tau_5 &= \frac{1}{4} \varpi_5
 \end{array}\\ \hline
 \end{longtabu}
 \end{footnotesize}

 \subsection{Type III reduction}
 
 Normalize $A = xy^3$, i.e. $A_4 = \frac{1}{4}, \, A_5 = A_3 = A_2 = A_1 = 0$.  Then $0 = d(A_i)$ implies:
 \begin{align*}
 \alpha_1 = \alpha_2 = 0, \quad \nu_2 = 2 \alpha_3, \quad 
 \nu_1 = \frac{1}{2} (\zeta_1 + \zeta_2) - 2\alpha_4, \quad \nu_3 = \alpha_5.
 \end{align*}
 Differentiating the $\nu_i$-relations above yields
 \begin{align*}
 2\delta a_{31} &= -(3z_1 + z_2) a_{31} + t_2, \quad
 2\delta a_{34} = -(z_1 + 3z_2) a_{34} - t_3, \\
 2\delta a_{41} &= (z_2 - z_1) a_{41} - t_1, \quad
 2\delta a_{44} = (z_1 - z_2) a_{44} - t_4.
 \end{align*}
 Normalize $a_{31} = a_{34} = a_{41} = a_{44} = 0$ (so $t_1 = t_2 = t_3 = t_4 = 0$).  Then $2\delta a_{45} = -2 (z_1 + z_2) a_{45} - t_5$, so normalize $a_{45} = 0$, and let $\tau_i = T_{ij} \varpi_j$.  We have reduced to a 7-dimensional subbundle with:
 \begin{align*}
 \begin{array}{rl}
 \delta a_{42} &= -\left( \frac{3z_1 + z_2}{2} \right) a_{42}\\
 \delta a_{43} &= -\left( \frac{z_1 + 3z_2}{2} \right) a_{43}\\
 \delta a_{32} &= -\left( \frac{5z_1 + 3z_2}{2} \right) a_{32}\\
 \delta a_{33} &= -\left( \frac{3z_1 + 5z_2}{2} \right) a_{33}\\
 \delta a_{35} &= -2(z_1+z_2) a_{35}
 \end{array}, \quad
 \begin{array}{rl}
 \delta a_{51} &= \left( \frac{z_1 + 3z_2}{2} \right) a_{51}\\
 \delta a_{52} &= \left( \frac{z_2 - z_1}{2} \right) a_{52}\\ 
 \delta a_{53} &= \left( \frac{z_1 - z_2}{2} \right) a_{53}\\
 \delta a_{54} &= \left( \frac{3z_1 + z_2}{2} \right) a_{54}\\
 \delta a_{55} &= 0
 \end{array}
 \end{align*}
 However, as indicated in Theorem \ref{T:Petrov-sym}, there are no type III structures with 7 symmetries.
 
%
 
 Now, $0 = d^2 \nu_2 \wedge \varpi_{45} = d^2 \nu_2 \wedge \varpi_{15}$ implies $B_1 = B_5 = 0$, and $0 = d(B_1) = d(B_5)$ is equivalent to:
 \[
 \beta_1 = 6 B_2 \alpha_3, \quad
 \beta_5 = 6 B_6 \alpha_3,
 \]
 and further Bianchi identities imply
 \[
 a_{42} = -2B_3, \quad a_{43} = 2 B_7, \quad a_{52} = -2 B_4, \quad a_{32} = -2 B_2, \quad a_{33} = 2 B_6, \quad a_{53} = 2 B_8, \quad B_2 B_6 = 0.
 \]
 There is a duality inducing $(B_j,B_{j+4}) \mapsto (-B_{j+4},B_j)$, where $j=1,...,4$, so WLOG, we may assume that one of $B_2,B_3,B_4$ is nonzero, or $B=0$.  Similar calculations show that for multiply transitive structures, we must have $B_2 = B_3 = 0$ (hence, $B_6 = B_7 = 0$ also).  Up to duality, we only have: III.6-1 ($B_4 \neq 0$ branch), and III.6-2 ($B=0$ branch).  Structure equations are given in Table \ref{T:III-streq}.

 \vspace{1in}
 \begin{footnotesize}
 \begin{longtabu}{|>{$}c<{$}|>{$}c<{$}|>{$}l<{$}|>{$}l<{$}|}  
 \caption{Multiply transitive type III structure equations}
 \label{T:III-streq}\\
\hline
 \mbox{Model} & \mbox{SD} &  \multicolumn{1}{|c|}{\mbox{Structure equations}} & \mbox{Embedding into Cartan bundle}\\ \hline
 \mbox{III.6-1} & \xmark &
 \begin{array}{l@{\,}l}
 d\varpi_1&= \varpi_1 \wedge \varpi_4\\
 d\varpi_2&= (-\frac{5}{4} \varpi_1 + \varpi_4 + 2\zeta_1) \wedge \varpi_2 - \frac{1}{2} \varpi_1 \wedge \varpi_5\\
 d\varpi_3&= -\frac{3}{2} \varpi_1 \wedge \varpi_3 + \frac{3}{16} \varpi_1 \wedge \varpi_5 - \frac{3}{4} \varpi_2 \wedge \varpi_4 \\
 &\qquad - 3 \varpi_3 \wedge \varpi_4 - 2 \varpi_3 \wedge \zeta_1 + \frac{3}{4} \varpi_4 \wedge \varpi_5\\
 d\varpi_4&= -\frac{1}{2} \varpi_1 \wedge \varpi_4\\
 d\varpi_5&= \varpi_1 \wedge \varpi_3 + \varpi_2 \wedge \varpi_4 - \varpi_1 \wedge \varpi_5 \\
 &\qquad + 2 \varpi_4 \wedge \varpi_5 - 2\varpi_5 \wedge \zeta_1\\
 d\zeta_1 &= \frac{9}{8} \varpi_1 \wedge \varpi_4
 \end{array} & \begin{array}{l@{\,\,}l} 
 \zeta_2 &= - \varpi_1 + 2 \varpi_4 + \zeta_1 \\
 \nu_1 &= \zeta_1 - \frac{1}{2} \varpi_1 + \varpi_4\\
 \nu_3 &= -\frac{3}{4} \varpi_2 - \frac{1}{8} \varpi_5\\
 \tau_1 &= \frac{3}{16} \varpi_1 + \frac{5}{8} \varpi_4\\
 \tau_4 &= \frac{3}{8} \varpi_1\\
 \nu_2 &= \tau_2 = \tau_3 = \tau_5 = 0 \\
 \end{array}\\ \hline
  \mbox{III.6-2} & \xmark &
 \begin{array}{l@{\,}l}
 d\varpi_1&= (-2\zeta_2 + 2\varpi_3) \wedge \varpi_1 \\
 d\varpi_2&= (-4\zeta_2 + 6\varpi_3) \wedge \varpi_2 + \frac{1}{4} \varpi_1 \wedge \varpi_5\\
 d\varpi_3&= \frac{3}{8} \varpi_1 \wedge \varpi_4 \\
 d\varpi_4&= (2\zeta_2 - 2\varpi_3) \wedge \varpi_4, \\
 d\varpi_5&= (-2\zeta_2 + 4\varpi_3) \wedge \varpi_5 + \varpi_1 \wedge \varpi_3 + \varpi_2 \wedge \varpi_4 \\
 d\zeta_2 &= \frac{5}{8} \varpi_1 \wedge \varpi_4
 \end{array} & \begin{array}{l@{\,\,}l} 
 \zeta_1 &= 4 \varpi_3 - 3 \zeta_2\\
 \nu_1 &= 2 \varpi_3 - \zeta_2\\
 \nu_2 &= 0, \nu_3 = \frac{3}{8} \varpi_1 - \frac{1}{8} \varpi_5\\
 \tau_1 &= -\frac{1}{8} \varpi_4\\
 \tau_2 &= \tau_3 = \tau_5 = 0\\
 \tau_4 &= -\frac{3}{8} \varpi_1
 \end{array}\\ \hline
 \end{longtabu}
 \end{footnotesize}

\section{Integration of structure equations}

In this section, we outline the transition from structure equations found in the previous section to the corresponding systems of 2nd order PDEs. This is done in three steps:
\begin{enumerate}
\item Normalize the algebraic structure of the Lie algebra data defined by the structure equations. This step consists of identifying the type of the Lie algebra $\fg$, the isotropy subalgebra $\fk$ and the subspaces $E,V\subset\fg /\fk$ corresponding to the two Legendrian subbundles.  We note that both $E+\fk$ and $V+\fk$ are in fact subalgebras of $\fg$, as we deal only with integrable structures. We also try to find a good basis in $\fg$, adjusting it to the Levi decomposition and the nilradical.
\item Realize $\fg$ as a transitive Lie algebra of vector fields on $\bbC^3$ in such a way that its isotropy subalgebra at a certain point is equal exactly to $V+\fk$. This guarantees that the first prolongation of $\fg$ is transitive on $J^1(\bbC^2,\bbC)$ and has isotropy $\fk$ at a certain point. 
\item Finally, we compute all complete systems of 2nd order PDEs admitting $\fg$ as its symmetry and identify those which correspond to $E+\fk$. In fact, in all cases but one (D.6-3${}_\infty$, see Example~\ref{ex:D63} below) there is exactly one such system, and this identification is obtained automatically.
\end{enumerate}

\begin{example}
Consider the structure equations for the model D.7 as given in Table~\ref{T:D-streq}. Simple analysis shows the corresponding Lie algebra $\fg$ has radical of dimension 1 if $a\ne\pm \frac{3}{4}$, and of dimension 4 if $a=\pm \frac{3}{4}$.  (Note that $a$ and $-a$ yield equivalent models.)  Consider first the case $a\ne \frac{3}{4}$. It is clear that $\fg$ has a 6-dimensional Levi subalgebra, which is isomorphic to $\fsl_2(\bbC)\times\fsl_2(\bbC)$ (the only complex semisimple Lie algebra in this dimension). As any action of this Levi subalgebra on the 1-dimensional radical is trivial, $\fg$ is isomorphic to $\fsl_2(\bbC)\times\fsl_2(\bbC)\times\bbC$. The corresponding basis change from the Cartan reduced basis to the adapted Lie algebra basis in given in Table~\ref{Cartan2adapted}. 

Next, analyzing the Cartan basis, we see that the isotropy $\fk$ is 2-dimensional and abelian. Moreover, its projection to each $\fsl_2(\bbC)$-factor is one-dimensional and diagonalizable, while the intersection with each $\fsl_2(\bbC)$-factor is trivial. This implies that $\fk$ is conjugate to the following subalgebra in $\fg$:
\[
\fk \sim \langle H_1 - Z, H_2 - \lambda Z \rangle, \quad \lambda \in \bbC \backslash \{ 0 \},
\] 
where $H_1, H_2$ are parts of the standard $\fsl_2(\bbC)$-basis $\{ X_i, H_i, Y_i \}$ in each copy of $\fsl_2(\bbC)$, and $Z$ spans the center $\fz=\bbC$.  Also, $\lambda=\frac{3+4a}{3-4a}$, and the redundancy $a \mapsto -a$ induces the redundancy $\lambda\mapsto \frac{1}{\lambda}$. 

Further, it is easy to check that the projections of both $E+\fk$ and $V+\fk$ to each $\fsl_2(\bbC)$-factor is two-dimensional. Thus, we can assume that:
\begin{align*}
V+\fk &= \langle X_1, X_2, H_1-Z, H_2-\lambda Z\rangle,\\
E+\fk &= \langle Y_1, Y_2, H_1-Z, H_2-\lambda Z\rangle.
\end{align*}
Let us now realize $\fg$ as a Lie algebra of vector fields on $\bbC^3=J^0(\bbC^2,\bbC)$ with the isotropy subalgebra equal to $V+\fk$. Note that $\fh = V+\fk + \fz$ is a subalgebra of codimension 2 in $\fg$. However, it is not effective, and the maximal ideal of $\fg$ contained in $\fh$ is exactly $\fz$. So, $\fg/\fz$ can be realized as a Lie algebra of vector fields on $\bbC^2$ with the isotropy $\fh/\fz$. But $\fg/\fz$ is isomorphic to $\fsl_2(\bbC)\times\fsl_2(\bbC)$ with $\fh/\fz$ identified with the direct product of two subalgebras of upper-triangular matrices. It is easy to see that it integrates to the global action of $\PSL_2(\bbC)\times \PSL_2(\bbC)$ on $\bbP^1 \times \bbP^1$. Locally this leads to the following realization of $\fg/\fz$:
\[
\langle \partial_x,\, 2x\partial_x, \, x^2\partial_x,\, \partial_y, \, 2y\partial_y, \, y^2\partial_y\rangle.
\]
We can always assume that the realization of $\fg$ is adapted to it. In other words, it can be obtained from the above one by adding terms of the form $f(x,y,u)\partial_u$ to the above vector fields and realizing the center $Z$ as a vector field of the form $g(x,y,u)\partial_u$. Simple computation shows that we can always adapt the coordinates $(x,y,u)$ such that $Z$ becomes equal to $\partial_u$, and we get the following realization of $\fg$:
\[
\langle \partial_x, \, 2x\partial_x+\partial_u, \, x^2\partial_x+x\partial_u,\, \partial_y, \, 2y\partial_y+\tfrac{1}{\lambda}\partial_u, \, y^2\partial_y+\tfrac{1}{\lambda}y\partial_u,\, \partial_u\rangle.
\]

Prolonging this Lie algebra of vector fields to $J^1(\bbC^2,\bbC)$ and checking which complete systems of 2nd order PDEs are invariant with respect to it, we immediately get that the only such system has the form:
\[
u_{11} = p^2,\quad u_{12} = 0,\quad u_{22} = \lambda q^2. 
\]

Setting now the parameter $\lambda$ to $0$ and computing the symmetry algebra of the above system of PDEs, we obtain exactly the Lie algebra $\fg$, its subalgebra $\fk$ and subspaces $E,V\subset \fg/\fk$, that match the exceptional case $a=\pm \frac{3}{4}$ of the Cartan structure equations in case of D.7.
\end{example}

\begin{example}\label{ex:D63}
Consider now the case D.6-3. We note that in this case the Lie algebra $\fg$ defined by the structure equations is semisimple if $a\ne \pm 3$ and has a 3-dimensional abelian ideal otherwise. First, consider the generic case of $a\ne \pm 3$. Then $\fg$ is isomorphic to $\fsl_2(\bbC)\times\fsl_2(\bbC)$.  As above, denote by $\{ X_i, H_i, Y_i\}$, $i=1,2$, the standard bases of these two copies of $\fsl_2(\bbC)$. Direct inspection of the Cartan structure equations shows that $\fk = \langle H_1 - H_2 \rangle$ and both subalgebras $E+\fk$ and $V+\fk$ are three-dimensional semisimple. But any simple subalgebra of $\fg$ containing $\fk$ has the form:
\begin{equation}\label{subalgs}
\langle X_1 + \mu Y_2, X_2 + \mu Y_1, H_1 - H_2\rangle,\quad \mu\ne 0,
\end{equation}
and any two such subalgebras are conjugate to each other by means of inner automorphisms preserving $\fk$. Hence, we can assume that $V+\fk$ corresponds to $\mu = 1$, which is exactly the diagonal of the direct product of $\fsl_2(\bbC)\times \fsl_2(\bbC)$.  Under the classical isomorphism $\fso(4,\bbC)\simeq \fsl_2(\bbC)\times \fsl_2(\bbC)$ this subalgebra corresponds to the standard embedding of $\fso(3,\bbC)\subset \fso(4,\bbC)$. So, we can realize the Lie algebra $\fg$ as a Lie algebra of vector fields corresponding to the action of $SO(4,\bbC)$ on the three-dimensional complex sphere. In an appropriate coordinate system we get the following vector fields:
\begin{align*}
  X_1 &= \partial_x,  & H_1 =& -2x\partial_x-2u\partial_u,  & Y_1 =& -x^2\partial_x-u\partial_y-2xu\partial_u\\
  X_2 &= \partial_y,   & H_2 = & -2y\partial_y-2u\partial_u,  & Y_2 =& -y^2\partial_y-u\partial_x-2yu\partial_u.
\end{align*}

Again, prolonging this Lie algebra of vector fields to $J^1(\bbC^2,\bbC)$ and computing all invariant systems of 2nd order PDEs, we obtain the following family of systems:
\[
  u_{11} = \lambda p^2\frac{\sqrt{u-pq}}{u^{3/2}}, \quad
  u_{12} = 1+\lambda (pq-2u)\frac{\sqrt{u-pq}}{u^{3/2}}, \quad
  u_{22} = \lambda q^2\frac{\sqrt{u-pq}}{u^{3/2}}. 
\]
Each such system corresponds to the subalgebra~\eqref{subalgs} with $\mu = \frac{2\lambda-1}{2\lambda+1}$.

In the limiting case of $a=\pm3$ in the structure equations we get $\fg\simeq \fso(3,\bbC)\rightthreetimes \bbC^3$ and $\fk+V=\bbC^3$. This pair corresponds to the group of complex Euclidean transformations of $\bbC^3$, which preserves the following family of complete systems of 2nd order PDEs:
 \[
 u_{11}=\lambda p^2\sqrt{1-2pq}, \quad
 u_{12}=\lambda (pq-1)\sqrt{1-2pq}, \quad
 u_{22}= \lambda q^2\sqrt{1-2pq}.
 \]
If $\lambda=0$, this system is flat and has 15-dimensional symmetry algebra. If $\lambda\ne 0$, then we can normalize it to $\lambda=1$ by means of the transformation $(x,y,u)\mapsto (\lambda x, \lambda y, \lambda u)$. To distinguish this special case from the generic one, we denote it by D.6-3${}_\infty$. 
\end{example}
\begin{example}
Consider the case N.6-2, which involves two parameters. The Lie algebra $\fg$ is solvable in this case and has a 4-dimensional abelian nilradical $\fn$. Two basis elements complementary to $\fn$ act on $\fn$ by the following two commuting matrices:
\[
\begin{pmatrix}
2b & 0 & 0 & 1 \\
0 & b & 1 & 0 \\
0 & 1 & 2b & 0 \\
1 & 0 & 0 & b 
\end{pmatrix},\quad
\begin{pmatrix}
a & 0 & -1 & 0 \\
0 & 2a & 0 & -1 \\
-1 & 0 & 2a & 0 \\
0 & -1 & 0 & a 
\end{pmatrix}. 
\]
If parameters $a,b$ of the structure equations satisfy $a^2+4\ne0, b^2+4\ne 0$, then both matrices simultaneously diagonalize in a certain basis $\{N_1, N_2, N_3, N_4\}$ of $\fn$ to become:
\begin{align*}
& \tfrac{1}{2}\diag\left(3b-\sqrt{b^2+4}, 3b+\sqrt{b^2+4}, 3b+\sqrt{b^2+4}, 3b-\sqrt{b^2+4}\right),\\ 
& \tfrac{1}{2}\diag\left(3a-\sqrt{a^2+4}, 3a-\sqrt{a^2+4},  3a+\sqrt{a^2+4}, 3a+\sqrt{a^2+4}\right).
\end{align*}
After rescaling, we can bring them to the form:
\begin{align*}
& \diag(\mu-1,\mu,\mu,\mu-1), &\mu &= \frac{1}{2} + \frac{3b}{2\sqrt{b^4+4}},\\
& \diag(\kappa+1,\kappa+1,\kappa+2,\kappa+2), &\kappa &= \frac{3}{2} + \frac{3a}{2\sqrt{a^2+4}}.
\end{align*}
Denote by $S_1,S_2$ the corresponding elements in $\fg$, which span the complementary subspace to $\fn$. In general, this subspace is not a subalgebra, and $[S_1,S_2]\in\fn$.  But if any of these two matrices is invertible (meaning $\mu\ne 0,1$ or $\kappa\ne -1,-2$) then we can always adjust $S_1,S_2$ by adding elements from $\fn$ such that we get $[S_1,S_2]=0$. We note that there are elements $u_1,u_2\in\fn$ such that $S_1+u_1\in V+\fk$, $S_2+u_2\in E+\fk$.

It is easy to check that the intersection of $V+\fk$ with $\fn$ is two-dimensional and can be made equal to $\langle N_1-N_4, N_2-N_3\rangle$ after suitable rescaling to basis vectors $\{N_i\}$. Hence, in any realization of $\fg$ as a transitive Lie algebra of vector fields on $\bbC^3$ having $V+\fk$ as a stabilizer, $\fn$ will be a 4-dimensional abelian Lie algebra with 2-dimensional orbits. In particular, we can always choose a local coordinate system $(x,y,u)$ in such a way that $N_3=\partial_u$, $N_4=\partial_y$, and two other basis vectors $N_1,N_2$ will be of the form $f(x)\partial_y+g(x)\partial_u$.  As $S_1,S_2$ act by scalings on any of $N_i$, $i=1,\dots,4$, it is natural to assume that they are represented as linear combinations of vector fields $x\partial_x,y\partial_y,u\partial_u$. Using this ansatz, we immediately get the following representation of $\fg$:
\begin{align*}
S_1 &= -(\mu-1)y\partial_y-\mu u\partial_u,\\ 
S_2 &=-x\partial_x-(\kappa+2)y\partial_y-(\kappa+2)u\partial_u,\\
 N_1 &=x\partial_y, \quad N_2=x\partial_u, \quad N_3=\partial_u, \quad N_4=\partial_y.
\end{align*}
Prolonging this Lie algebra of vector fields to $J^1(\bbC^2,\bbC)$ and computing all invariant complete systems of 2nd order PDEs, we arrive at the following system:
\[
u_{11} = q^\mu x^\kappa, \quad u_{12}=0, \quad u_{22} = 0.
\]
The special values of parameters we omitted on the way can be treated in a similar way and lead to the following systems of PDEs:
\begin{itemize}
\item $a^2+4 = 0, b^2+4\ne 0$ (or equivalently, $a^2+4\ne 0, b^2+4=0$):
\[
u_{11} = e^q x^\kappa, \quad u_{12}=0, \quad u_{22} = 0.
\]
\item $a^2+4=b^2+4=0$:
\[
u_{11} = e^q e^x, \quad u_{12}=0, \quad u_{22} = 0.
\]
\item $\mu=0,1$ (or equivalently, $\kappa=-1,-2$):
 \[
u_{11} = \ln(q) x^\kappa, \quad u_{12}=0, \quad u_{22} = 0.
\]
\end{itemize}
More details on restrictions on parameters and realizations of $\fg$ in terms of vector fields for these special values of parameters are given in Table~\ref{Ncases}.
\end{example}
 
 \bibliographystyle{amsplain}

\begin{thebibliography}{40}

 \bibitem{BE1989} R.J. Baston and M.G. Eastwood, The Penrose transform: its interaction with representation theory, Oxford Mathematical Monographs, Clarendon Press, Oxford, 1989.

\bibitem{Bol1932} G.\ Bol, \emph{\"Uber topologische Invarianten von zwei  Kurvenscharen in Raum}, Abhandlungen Math.\ Sem.\ Univ.\  Hamburg., 9 (1932), no.~1, 15--47.

 \bibitem{Cap2005} A. \v{C}ap, Correspondence spaces and twistor spaces for parabolic geometries, J. reine angew. Math. \textbf{582} (2005), 143--172.

 \bibitem{CS2009} A. \v{C}ap and J. Slov\'ak, Parabolic Geometries I: Background and General Theory, Mathematical Surveys and Monographs, vol. 154, American Mathematical Society, 2009.

 \bibitem{Cartan1910} \'E.\ Cartan, Les syst\`emes de Pfaff \`a cinq variables et les \'equations aux d\'eriv\'ees partielles du second ordre, Ann. Sci. \'Ec. Norm. Sup\'er. (3) 27 (1910), 109--192.

\bibitem{Cartan1924} \'E.\ Cartan, \emph{Sur les vari\'etes \`a connexion projective},   Bull.\ Soc.\ Math.\ France, 52 (1924), 205--241.

\bibitem{Cartan1932a} \'E.\ Cartan, \emph{Sur la g\'eom\'etrie pseudo-conforme des hypersurfaces de l'espace de deux variables complexes}, Ann. Mat. Pura Appl., IV. Ser.~11 (1932), 17--90.

\bibitem{Cartan1932b} \'E.\ Cartan, \emph{Sur la g\'eom\'etrie pseudo-conforme des hypersurfaces de l'espace
de deux variables complexes II}, Ann. Sc. Norm. Super. Pisa, II. Ser.~1 (1932), no. 4, 333--354.
 
 \bibitem{DouGov2013} B. Doubrov, A. Govorov, A new example of a generic 2-distribution on a 5-manifold with large symmetry algebra, arXiv:1305.7297 (2013).
 
 \bibitem{Gardner1989} R.B. Gardner, The method of equivalence and its applications, CBMS-NSF Regional Conference Series in Applied Mathematics, 58. Society for Industrial and Applied Mathematics (SIAM), Philadelphia, PA, 1989.
  
 \bibitem{Kos1961} B. Kostant, \emph{Lie algebra cohomology and the generalized Borel--Weil theorem}, Ann. Math. {\bf 74}, no. 2 (1961), 329--387.

 \bibitem{KT2013} B. Kruglikov, D. The, \emph{The gap phenomenon in parabolic geometries}, J. reine angew. Math. (2014), doi: 10.1515/crelle-2014-0072.

\bibitem{Loboda2001a} A.V.~Loboda, \emph{Homogeneous real hypersurfaces in $\bbC^3$ with two-dimensional isotropy groups}, Tr. Mat. Inst. Steklova 235 (2001), Anal. i Geom. Vopr. Kompleks. Analiza, 114--142; (Russian) translation in Proc. Steklov Inst. Math. 2001, no. 4 (235), 107--135.

\bibitem{Loboda2001b} A.V.~Loboda, \emph{Homogeneous strictly pseudoconvex hypersurfaces in $\bbC^3$ with two-dimensional isotropy groups}, Mat. Sb. 192 (2001), no. 12, 3--24; (Russian) translation in Sb. Math. 192 (2001), no. 11-12, 1741--1761. 

\bibitem{Olver1995} P.J.~Olver, \emph{Symmetry, invariants, and equivalence}, Springer--Verlag, New York, 1995.

 \bibitem{Tak1994} M. Takeuchi, Lagrangean contact structures on projective cotangent bundles. Osaka J. Math. {\bf 31} (1994), no. 4, 837--860.
 
\bibitem{Tresse1896} A.~Tresse, \emph{D\'etermination des invariants ponctuels de l'\'equation diff\'erentielle ordinaire du second ordre $y'' = \omega (x,y,y')$}, Leipzig. 87 S. gr. $8^\circ.$ (1896).

\end{thebibliography}

  \appendix
  \section{Classification Tables}
  
  
   \begin{landscape} 
 \setlength{\tabcolsep}{2pt}
 \fontsize{6}{8} \selectfont
  \begin{longtabu}[HT]{ l  *5{>{$} l<{$}} }
  \caption{Classification of type N cases}\label{Ncases}\\
  \toprule
  & \mbox{\bf Model} & \mbox{\bf Parameters}&  \mbox{\bf Symmetries} & \mbox{\bf Lie algebra structure} & \mbox{\bf Abstract ILC structure}\\ 
  \midrule
  \endfirsthead
  \caption[]{Classification of type N cases (continued)}\\
  \toprule
  & \mbox{\bf Model} & \mbox{\bf Parameters}&  \mbox{\bf Symmetries} & \mbox{\bf Lie algebra structure} & \mbox{\bf Abstract ILC structure}\\ 
  \midrule

  \endhead
     N.8 & \begin{array}{l} 
    u_{11} = q^2 \\
    u_{12} = 0  \\ 
    u_{22} = 0 \end{array}
    &  & \begin{array}{r@{\,}l} 
     S_1 = & -y\p_y-2u\p_u-2p\p_p-q\p_q
    \\
     S_2 = & -x\p_x -2y\p_y -2u\p_u -p\p_p
    \\
   N_1 = & \frac{x^2}{2}\p_y-\frac{y}{2}\p_u-qx\p_p-\frac12\p_q
   \\
   N_2 =& \p_x
   \\
     N_3 = & -x \p_y+q\p_p
     \\
     N_4 = & -\frac{x}{2}\p_u-\frac12\p_p
     \\
   N_5 = & -\frac12\p_u
   \\
   N_6 = & -\p_y
   \\
    & (\mbox{General pt: } x=y=u=p=q=0)
    \end{array} &  \begin{array}{c|ccccccccc} 
    & S_1 & S_2 & N_1 & N_2 & N_3 & N_4  & N_5 & N_6\\ \hline
    S_1 &\cdot & \cdot & N_1 &\cdot & N_3 &  2 N_4 & 2N_5 & N_6 \\ 
    S_2 & & \cdot  & \cdot & N_2 & N_3 & N_4  & 2 N_5 & 2 N_6 \\
    N_1 & & & \cdot  & N_3 & N_4 & \cdot &\cdot  & N_5 \\ 
    N_2 &&&& \cdot & N_6 & N_5 & \cdot  &\cdot\\ 
    N_3 &&&&&\cdot &\cdot & \cdot  &\cdot\\ 
    N_4 &&&&&&\cdot &\cdot &\cdot \\
    N_5 &&&&&&&\cdot &\cdot \\
    N_6 &&&&&&& &\cdot\end{array} &
    \begin{array}{rl} 
    E/\fk : &N_2, N_6\\
    V/\fk : &N_1,N_4  \\ 
     \fk : & N_3, S_1, S_2
    \end{array}
    \\
       \midrule
  
    N.7-1 & \begin{array}{l} 
  u_{11} = q^2 x^\kappa \\
  u_{12} = 0  \\ 
  u_{22} = 0 \end{array}
  & \begin{array}{l} 
  \kappa\neq -1,-2,0,-3,\infty\\
  (a^2 \neq \frac12,-2)
  \end{array} & \begin{array}{r@{\,}l} 
   S_1 = & -y \p_y-2 u\p_u -2 p\p_p-q\p_q 
  \\
   S_2 = & -x\p_x-(\kappa+2)y\p_y -(\kappa+2)u\p_u
   \\ 
   &-(\kappa+1)p\p_p
  \\
    N_1 = & \frac{x^{\kappa+2}}{\kappa+2}\p_y -\frac{\kappa+1}2 y\p_u -x^{\kappa+1}q\p_p \\
    & -\frac{k+1}2 \p_q
    \\
 N_2 = & x\p_y-q\p_p
 \\
 N_3 = &\frac{\kappa+1}2\left( x\p_u + \p_p \right)
 \\
 N_4 = &\frac{\kappa+1}2 \p_u
 \\
 N_5 = & \p_y
 \\
  & (\mbox{General pt: } y=u=p=q=0, x=1)
  \end{array} &  \begin{array}{c|cccccccc} 
  & S_1 & S_2 & N_1 & N_2 & N_3 & N_4 & N_5\\ \hline
  S_1 &\cdot & \cdot & N_1 & N_2 & 2 N_3 &  2 N_4 & N_5 \\ 
  S_2 & & \cdot  & \cdot & (\kappa+1) N_2 & (\kappa+1) N_3 &  (\kappa+2) N_4  &  (\kappa+2) N_5  \\
  N_1 & & & \cdot  & N_3 & \cdot & \cdot & N_4 \\ 
  N_2 &&&&\cdot &\cdot& \cdot & \cdot \\ 
  N_3 &&&&&\cdot &\cdot & \cdot \\ 
  N_4 &&&&&&\cdot &\cdot \\
  N_5 &&&&&&&\cdot \end{array} &
  \begin{array}{rl} 
  E/\fk : &S_2, N_2
  \\
  V/\fk : &N_3-N_4, \\
   & -N_1+\frac1{\kappa+2}N_2
   \\ 
   \fk : & S_1,  N_2-N_5 
  \end{array}
  \\
     \cmidrule{2-6}
  & \begin{array}{l} 
  u_{11} = q^2 x^{-1} \\
  u_{12} = 0  \\ 
  u_{22} = 0 \end{array}
  & \begin{array}{l} 
  \kappa= -1\\
  (a = \frac1{\sqrt2})
  \end{array} & \begin{array}{r@{\,}l} 
   S_1 = & -y \p_y-2 u\p_u -2 p\p_p-q\p_q 
  \\
   S_2 = & -x\p_x-y\p_y -u\p_u
  \\
    N_1 = & 2x(\ln(x)-1)\p_y -y\p_u \\
    & -2\ln(x)q\p_p - \p_q
    \\
 N_2 = & x\p_y-q\p_p
 \\
 N_3 = & x\p_u + \p_p
 \\
 N_4 = & \p_u
 \\
 N_5 = & \p_y
 \\
  & (\mbox{General pt: } y=u=p=q=0, x=1)
  \end{array} &  \begin{array}{c|cccccccc} 
  & S_1 & S_2 & N_1 & N_2 & N_3 & N_4 & N_5\\ \hline
  S_1 &\cdot & \cdot & N_1 & N_2 & 2 N_3 &  2 N_4 & N_5 \\ 
  S_2 & & \cdot  & - 2 N_2  & \cdot &\cdot &  N_4  & N_5  \\
  N_1 & & & \cdot  & N_3 & \cdot & \cdot & N_4 \\ 
  N_2 &&&&\cdot &\cdot& \cdot & \cdot \\ 
  N_3 &&&&&\cdot &\cdot & \cdot \\ 
  N_4 &&&&&&\cdot &\cdot \\
  N_5 &&&&&&&\cdot \end{array} &
  \begin{array}{rl} 
  E/\fk : &S_2,N_2\\
  V/\fk : & N_3 - N_4 , \\
   & N_1+2N_2
   \\ 
   \fk : & S_1, N_2-N_5 \end{array}
   
    \\
       \cmidrule{2-6}
    & \begin{array}{l}
    u_{11} = q^2 e^x \\
    u_{12} = 0  \\ 
    u_{22} = 0 \end{array}
    & \begin{array}{l} 
    \kappa= \infty \\
    (a =\pm2i)
    \end{array} & \begin{array}{r@{\,}l} 
     S_1 = & -y \p_y-2 u\p_u -2 p\p_p-q\p_q 
    \\
     S_2 = & -\p_x-y\p_y -u\p_u - p\p_p
    \\
      N_1 = & 2 e^x\p_y -y\p_u \\
      & -2e^xq\p_p - \p_q
      \\
   N_2 = & x\p_y-q\p_p
   \\
   N_3 = & x\p_u + \p_p
   \\
   N_4 = & \p_u
   \\
   N_5 = & \p_y
   \\
    & (\mbox{General pt: } x=y=u=p=q=0,)
    \end{array} &  \begin{array}{c|cccccccc} 
    & S_1 & S_2 & N_1 & N_2 & N_3 & N_4 & N_5\\ \hline
    S_1 &\cdot & \cdot & N_1 & N_2 & 2 N_3 &  2 N_4 & N_5 \\ 
    S_2 & & \cdot  & \cdot  & N_2-N_5 &N_3-N_4 &  N_4  & N_5  \\
    N_1 & & & \cdot  & N_3 & \cdot & \cdot & N_4 \\ 
    N_2 &&&&\cdot &\cdot& \cdot & \cdot \\ 
    N_3 &&&&&\cdot &\cdot & \cdot \\ 
    N_4 &&&&&&\cdot &\cdot \\
    N_5 &&&&&&&\cdot \end{array} &
    \begin{array}{rl} 
    E/\fk : & S_2,N_5\\
    V/\fk : & N_3 , -N_1+2N_5
     \\ 
     \fk : & S_1, N_2 \end{array}
   
     \\
    \midrule
 
     N.7-2 & \begin{array}{l} 
   u_{11} = q^{-1} \\
   u_{12} = 1  \\ 
   u_{22} = 0 \end{array}
   &  & \begin{array}{r@{\,}l} 
    X = & -\p_x + \p_u 
   \\
    H = & -2x\p_x + \p_y -2u\p_u -2q\p_q
   \\
     Y = &x^2\p_x + u\p_y  +(2ux+x^2)\p_u \\
     & +(2x+2u-qp)\p_p+q(2x-q) \p_q
     \\
  N_1 = & x\p_y+x^2\p_u-(q-2x)\p_p
  \\
  N_2 = &-\p_y -2x\p_u -2 \p_p
  \\
  N_3 = & 2 \p_u
  \\
  N_4 = & \p_y
  \\
   & (\mbox{General pt: } y=u=p=q=0, x=1)
   \end{array} &  \begin{array}{c|cccccccc} 
   & X & H & Y & N_1 & N_2 & N_3 & N_4\\ \hline
   X &\cdot & -2 X & H & N_2 &  N_3 & \cdot & \cdot \\ 
   H & & \cdot  & -2 Y & -2 N_1 & \cdot &  2 N_3  & \cdot  \\
   Y & & & \cdot  & \cdot & 2N_1 & 2 N_2 &\cdot \\ 
   N_1 &&&&\cdot &\cdot& \cdot & \cdot \\ 
   N_2 &&&&&\cdot &\cdot & \cdot \\ 
   N_3 &&&&&&\cdot &\cdot \\
   N_4 &&&&&&&\cdot \end{array} &
   \begin{array}{rl} 
   E/\fk : &X+Y+N_1-\frac12 N_3, \\
   & -N_1+\frac12 N_3+N_4 
   \\
   V/\fk : & Y, N_1
    \\ 
    \fk : & H-2 Y - N_4 ,\\
    & -2N_1+N_2+N_4
   \end{array}
   \\
      \midrule
 
     N.6-1 & \begin{array}{l} 
     u_{11} = q^\mu \\
     u_{12} = 1  \\ 
     u_{22} = 0 \end{array}
     & \begin{array}{l} 
     \mu\neq-1,2,0,1,\infty\\
     (a^2 \neq 0,\frac12,2,-1)
     \end{array} & \begin{array}{r@{\,}l} 
     S = & -x\p_x -(\mu+1)y \p_y
     \\
     &-(\mu+2) u\p_u -(\mu+1) p\p_p-q\p_q 
     \\
      N_1 = & \p_x
     \\
    N_2 = & x\p_y + x^2\p_u + (2x-q)\p_p
    \\
    N_3 = &  \p_y + 2x\p_u + 2\p_p
    \\
    N_4 = & 2\p_u
    \\
    N_5 = & \p_y
    \\
     & (\mbox{General pt: } x=y=u=p=0, q=1)
     \end{array} &  \begin{array}{c|cccccccc} 
     & S & N_1 & N_2 & N_3 & N_4 & N_5\\ \hline
     S & \cdot & N_1 & \mu N_2 & (\mu+1) N_3 &  (\mu+2) N_4 & (\mu+1)N_5 \\ 
     N_1 & & \cdot & N_3 & N_4 & \cdot & \cdot \\
     N_2 & & & \cdot & \cdot & \cdot & \cdot \\ 
     N_3 &&&&\cdot& \cdot & \cdot \\ 
     N_4 &&&&&\cdot & \cdot \\ 
     N_5 &&&&&&\cdot \end{array} &
     \begin{array}{rl} 
     E/\fk : &-S+N_1-N_2,\\
     & N_2+N_3+\frac12 N_4 
     \\
     V/\fk : & S , N_2
      \\
      \fk : & 2N_2+N_3-N_5 \end{array}
        \\
       \cmidrule{2-6} 
       & \begin{array}{l} 
     u_{11} = \ln(q) \\
     u_{12} = 1  \\ 
     u_{22} = 0 \end{array}
     & \begin{array}{l} 
     \mu=0\, (a^2 =\frac12)
     \end{array} & \begin{array}{r@{\,}l} 
     S = & -x\p_x + (\frac{x}{2} -y) \p_y
     \\
     &-2u\p_u - (\frac{q}2 + p)\p_p-q\p_q 
     \\
      N_1 = & \p_x
     \\
    N_2 = & x\p_y + x^2\p_u + (2x-q)\p_p
    \\
    N_3 = &  \p_y + 2x\p_u + 2\p_p
    \\
    N_4 = & 2\p_u
    \\
    N_5 = & -\frac12\p_y
    \\
     & (\mbox{General pt: } x=y=u=p=0, q=1)
     \end{array} &  \begin{array}{c|cccccccc} 
     & S & N_1 & N_2 & N_3 & N_4 & N_5\\ \hline
     S & \cdot & N_1+ N_5 & \cdot  & N_3 &  2 N_4 & N_5 \\ 
     N_1 & & \cdot & N_3 & N_4 & \cdot & \cdot \\
     N_2 & & & \cdot & \cdot & \cdot & \cdot \\ 
     N_3 &&&&\cdot& \cdot & \cdot \\ 
     N_4 &&&&&\cdot & \cdot \\ 
     N_5 &&&&&&\cdot \end{array} &
     \begin{array}{rl} 
     E/\fk : &-S+N_1+\frac12 N_2,\\
     & N_2+N_3+\frac12 N_4 
     \\
     V/\fk : & N_2,  -S +\frac12 N_2
      \\
      \fk : & 2N_2+N_3+2 N_5 \end{array}
        \\
       \cmidrule{2-6} 
        & \begin{array}{l} 
     u_{11} = q\ln(q) \\
     u_{12} = 1  \\ 
     u_{22} = 0 \end{array}
     & \begin{array}{l} 
     \mu=1\, (a^2 = 2)
     \end{array} & \begin{array}{r@{\,}l} 
     S = & -x\p_x +(\frac{x^2}2 - 2y) \p_y
      -(3u-\frac{x^3}3)\p_u 
     \\
     &- (2p+qx-x^2)\p_p-q\p_q 
     \\
      N_1 = & \p_x
     \\
    N_2 = & x\p_y + x^2\p_u + (2x-q)\p_p
    \\
    N_3 = &  \p_y + 2x\p_u + 2\p_p
    \\
    N_4 = & 2\p_u
    \\
    N_5 = & \p_y
    \\
     & (\mbox{General pt: } x=y=u=p=0, q=1)
     \end{array} &  \begin{array}{c|cccccccc} 
     & S & N_1 & N_2 & N_3 & N_4 & N_5\\ \hline
     S & \cdot & N_1 - N_2 & N_2  & 2N_3 &  3 N_4 & 2 N_5 \\ 
     N_1 & & \cdot & N_3 & N_4 & \cdot & \cdot \\
     N_2 & & & \cdot & \cdot & \cdot & \cdot \\ 
     N_3 &&&&\cdot& \cdot & \cdot \\ 
     N_4 &&&&&\cdot & \cdot \\ 
     N_5 &&&&&&\cdot \end{array} &
     \begin{array}{rl} 
     E/\fk : &-S+N_1,\\
     & N_2+N_3+\frac12 N_4 
     \\
     V/\fk : & S, N_2
      \\
      \fk : & 2N_2+N_3-N_5 \end{array}
        \\
       \midrule
    
      N.6-2 & \begin{array}{l} 
      u_{11} = q^\mu x^\kappa \\
      u_{12} = 0 \\ 
      u_{22} = 0 \end{array}
      & \begin{array}{l} 
      \mu\neq-1,2,0,1,\infty\\
      (b\neq \pm\frac1{\sqrt{2}},\pm 2i), \\
      \kappa\neq 0,-3,\infty\\
      (a\neq \pm 2i) \\
      \end{array} & \begin{array}{r@{\,}l} 
      S_1 = & -(\mu-1)y \p_y-\mu u\p_u -\mu p\p_p-q\p_q 
      \\
       S_2 = & -x\p_x-(\kappa+2)y\p_y -(\kappa+2)u\p_u
       \\ 
       &-(\kappa+1)p\p_p
      \\
     N_1 = & x\p_y-q\p_p
     \\
     N_2 = & x\p_u + \p_p
     \\
     N_3 = & \p_u
     \\
     N_4 = & \p_y
     \\
      & (\mbox{General pt: } y=u=p=0, q=x=1)
      \end{array} &  \begin{array}{c|cccccccc} 
      & S_1 & S_2 & N_1 & N_2 & N_3 & N_4\\ \hline
      S_1 & \cdot & \cdot & (\mu-1)N_1 & \mu N_2 &  \mu N_3 & (\mu-1)N_4 \\ 
      S_2 & & \cdot & (\kappa+1)N_1 & (\kappa+1)N_2 &  (\kappa+2)N_3 & (\kappa+2)N_4 \\
      N_1 & & & \cdot & \cdot & \cdot & \cdot \\ 
      N_2 &&&&\cdot& \cdot & \cdot \\ 
      N_3 &&&&&\cdot & \cdot \\ 
      N_4 &&&&&&\cdot \end{array} &
      \begin{array}{rl} 
      E/\fk : &-S_2-N_1+N_4,\\
      & N_1+N_2 
      \\
      V/\fk : & -N_1+N_4, S_1 
       \\
       \fk : & N_1+N_2-N_3-N_4 \end{array}
         \\
        \cmidrule{2-6}
       & \begin{array}{l} 
      u_{11} = e^q x^\kappa \\
      u_{12} = 0 \\  
      u_{22} = 0 \end{array}
      & \begin{array}{l} 
      \mu= \infty\\
      (b= \pm 2i) \\
       \kappa\neq 0,-3,\infty\\
       (a\neq \pm 2i) \\
      \end{array} & \begin{array}{r@{\,}l} 
      S_1 = & -y \p_y-(u+y)\p_u-p\p_p-\p_q 
      \\
       S_2 = & -x\p_x-(\kappa+2)y\p_y -(\kappa+2)u\p_u
       \\ &-(\kappa+1)p\p_p
      \\
     N_1 = & x\p_y-q\p_p
     \\
     N_2 = & x\p_u + \p_p
     \\
     N_3 = & \p_u
     \\
     N_4 = & \p_y
     \\
      & (\mbox{General pt: } y=u=p=x=0, q=1)
      \end{array} &  \begin{array}{c|cccccccc} 
      & S_1 & S_2 & N_1 & N_2 & N_3 & N_4\\ \hline
      S_1 & \cdot & \cdot & N_1+ N_2 & N_2 & N_3  & N_3+N_4 \\ 
      S_2 & & \cdot & (\kappa+1)N_1 & (\kappa+1)N_2 &  (\kappa+2)N_3 & (\kappa+2)N_4 \\
      N_1 & & & \cdot & \cdot & \cdot & \cdot \\ 
      N_2 &&&&\cdot& \cdot & \cdot \\ 
      N_3 &&&&&\cdot & \cdot \\ 
      N_4 &&&&&&\cdot \end{array} &
      \begin{array}{rl} 
      E/\fk : &S_2-N_2+N_3,\\
      & N_1
      \\
      V/\fk : & N_2-N_3, S_1 
      \\
       \fk : & N_1-N_4 \end{array}
           \\
        \cmidrule{2-6}
       & \begin{array}{l} 
      u_{11} = e^q e^x \\
      u_{12} = 0 \\  
      u_{22} = 0 \end{array}
      & \begin{array}{l} 
      \mu= \infty\\
      (b= \pm 2i) \\
       \kappa=\infty\\
       (a=\pm 2i) \\
      \end{array} & \begin{array}{r@{\,}l} 
      S_1 = & -y \p_y-(u+y)\p_u-p\p_p-\p_q 
      \\
       S_2 = & -\p_x-y\p_y -u\p_u -p\p_p
      \\
     N_1 = & x\p_y-q\p_p
     \\
     N_2 = & x\p_u + \p_p
     \\
     N_3 = & \p_u
     \\
     N_4 = & \p_y
     \\
      & (\mbox{General pt: } y=u=p=x=q=0)
      \end{array} &  \begin{array}{c|cccccccc} 
      & S_1 & S_2 & N_1 & N_2 & N_3 & N_4\\ \hline
      S_1 & \cdot & \cdot & N_1+ N_2 & N_2 & N_3  & N_3+N_4 \\ 
      S_2 & & \cdot & N_1-N_4 & N_2-N_3 &  N_3 & N_4 \\
      N_1 & & & \cdot & \cdot & \cdot & \cdot \\ 
      N_2 &&&&\cdot& \cdot & \cdot \\ 
      N_3 &&&&&\cdot & \cdot \\ 
      N_4 &&&&&&\cdot \end{array} &
      \begin{array}{rl} 
      E/\fk : &S_2+N_1-N_2,\\
      & N_4,
      \\
      V/\fk : & N_2, S_1 
      \\
       \fk : & N_1 \end{array}
          \\
        \cmidrule{2-6}
       & \begin{array}{l} 
      u_{11} = \ln(q) x^\kappa \\
      u_{12} = 0 \\  
      u_{22} = 0 \end{array}
      & \begin{array}{l} 
      \mu= 0\\
      (b= \pm\frac1{\sqrt{2}}) \\
       \kappa\neq -1,-2,0,-3\\
       (a\neq\pm\frac1{\sqrt{2}}) \\
      \end{array} & \begin{array}{r@{\,}l} 
      S_1 = & y \p_y-\frac{x^{\kappa+2}}{(\kappa+1)(\kappa+2)}\p_u-\frac{x^{\kappa+1}}{\kappa+1}\p_p-q\p_q 
      \\
       S_2 = & -x\p_x-(\kappa+2)y\p_y -(\kappa+2)u\p_u
       \\ &-(\kappa+1)p\p_p
      \\
     N_1 = & x\p_y-q\p_p
     \\
     N_2 = & x\p_u + \p_p
     \\
     N_3 = & \p_u
     \\
     N_4 = & \p_y
     \\
      & (\mbox{General pt: } y=u=p=0, q=x=1)
      \end{array} &  \begin{array}{c|cccccccc} 
      & S_1 & S_2 & N_1 & N_2 & N_3 & N_4\\ \hline
      S_1 & \cdot & \cdot & -N_1 &\cdot & \cdot  & -N_4 \\ 
      S_2 & & \cdot & (\kappa+1)N_1 & (\kappa+1)N_2 &  (\kappa+2)N_3 & (\kappa+2)N_4 \\
      N_1 & & & \cdot & \cdot & \cdot & \cdot \\ 
      N_2 &&&&\cdot& \cdot & \cdot \\ 
      N_3 &&&&&\cdot & \cdot \\ 
      N_4 &&&&&&\cdot \end{array} &
      \begin{array}{rl} 
      E/\fk : & S_2,  N_1+N_2 
      \\
      V/\fk : & -N_1+N_4, \\
       &  S_1 +\frac1{\kappa+1}N_2-\frac1{\kappa+2}N_3
      \\
       \fk : & N_1+N_2-N_3-N_4 \end{array}
       \\
        \cmidrule{2-6}
       & \begin{array}{l} 
      u_{11} = \ln(q) x^{-2} \\
      u_{12} = 0 \\ 
      u_{22} = 0 \end{array}
      & \begin{array}{l} 
      \mu= 0\\(b= \pm\frac1{\sqrt{2}})\\
      \kappa=-2\\ (a= \pm\frac1{\sqrt{2}})
      \end{array} & \begin{array}{r@{\,}l} 
      S_1 = & y \p_y+\ln(x)\p_u+\frac{1}{x}\p_p-q\p_q 
      \\
       S_2 = & -x\p_x+p\p_p
      \\
     N_1 = & x\p_y-q\p_p
     \\
     N_2 = & x\p_u + \p_p
     \\
     N_3 = & \p_u
     \\
     N_4 = & \p_y
     \\
      & (\mbox{General pt: } y=u=p=0, q=x=1)
      \end{array} &  \begin{array}{c|cccccccc} 
      & S_1 & S_2 & N_1 & N_2 & N_3 & N_4\\ \hline
      S_1 & \cdot & N_3 & -N_1 & \cdot& \cdot  & -N_4 \\ 
      S_2 & & \cdot & -N_1 & -N_2 &  \cdot & \cdot \\
      N_1 & & & \cdot & \cdot & \cdot & \cdot \\ 
      N_2 &&&&\cdot& \cdot & \cdot \\ 
      N_3 &&&&&\cdot & \cdot \\ 
      N_4 &&&&&&\cdot \end{array} &
      \begin{array}{rl} 
      E/\fk : &S_2,  N_1+N_2 
      \\
      V/\fk : & N_1- N_4, \\
       &  S_1 -N_2+N_3
      \\
       \fk : & N_1+N_2-N_3-N_4 \end{array}
       \\
    
   \bottomrule
 \end{longtabu}

  \begin{longtabu}{ l  *5{>{$} l<{$}} }
  \caption{Classification of type D cases}\\
  \toprule
  & \mbox{\bf Model} &  \mbox{\bf Parameters} & \mbox{\bf Symmetries} & \mbox{\bf Lie algebra structure} & \mbox{\bf Abstract ILC structure}\\ 
  \midrule
  \endhead 
  D.7 & \begin{array}{l} 
  u_{11} = p^2 \\
  u_{12} = 0 \\ 
  u_{22} = \lambda q^2 \end{array} & 
 \begin{array}{l} 
  \lambda\neq 0,-1\\
  (a \ne \pm 3/4)
  \end{array} &
 \begin{array}{r@{\,}l} 
  X_1 = & \partial_x\\
  H_1 =& -2 x \partial_x +\partial_u + 2p\partial_p\\
  Y_1 =& -x^2 \partial_x +x\partial_u + (1+2xp) \partial_p \\
  X_2 =& \partial_y \\
  H_2 =& -2y\partial_y+\frac{1}{\lambda}\partial_u+2q\partial_q\\
  Y_2 =& -y^2 \partial_y + \frac{1}{\lambda}y\partial_u + (\frac{1}{\lambda}+2yq)\partial_q\\
 Z = & \partial_u\\
  & (\mbox{General pt: } x=y=u=p=q=0)
  \end{array} &  \begin{array}{c|ccccccccc}
  & X_1 & H_1 & Y_1 & X_2 & H_2 & Y_2 & Z\\ \hline
  X_1 & \cdot & -2X_1 & H_1 & \cdot & \cdot & \cdot & \cdot\\ 
  H_1 & & \cdot & -2Y_1 & \cdot & \cdot & \cdot & \cdot \\
  Y_1 & & & \cdot & \cdot & \cdot & \cdot  & \cdot\\ 
  X_2 &&&&\cdot& -2X_2 & H_2 & \cdot \\ 
  H_2 &&&&&\cdot & -2H_2 & \cdot \\ 
  Y_2 &&&&&&\cdot & \cdot \\
 Z     &&&&&&&\cdot\end{array} &
  \begin{array}{rl} 
  \fk : & H_1-Z, \lambda H_2-Z\\ 
  E/\fk : & X_1, X_2 \\
  V/\fk : & Y_1, Y_2 \end{array} 
  \\
 \cmidrule{2-6} 
    & \begin{array}{l} 
  u_{11} = p^2 \\
  u_{12} = 0 \\ 
  u_{22} = 0 \end{array} & 
 \begin{array}{l} 
  \lambda= 0\\
  (a = \pm 3/4)
  \end{array} &
 \begin{array}{r@{\,}l} 
  X_1 = & \partial_x\\
  H_1 =& -2 x \partial_x +\partial_u + 2p\partial_p\\
  Y_1 =& -x^2 \partial_x +x\partial_u + (1+2xp) \partial_p \\
  S =& -y\partial_y + q\partial_q\\
  X_2 =& \partial_y\\
  Y_2 =& y\partial_u + \partial_q\\
 Z = & \partial_u\\
  & (\mbox{General pt: } x=y=u=p=q=0)
  \end{array} &  \begin{array}{c|ccccccccc}
  & X_1 & H_1 & Y_1 & S & X_2 & Y_2 & Z\\ \hline
  X_1 & \cdot & -2X_1 & H_1 & \cdot & \cdot & \cdot & \cdot\\ 
  H_1 & & \cdot & -2Y_1 & \cdot & \cdot & \cdot & \cdot \\
  Y_1 & & & \cdot & \cdot & \cdot & \cdot  & \cdot\\ 
  S &&&&\cdot& X_2 & -Y_2 & \cdot \\ 
  X_2 &&&&&\cdot & Z & \cdot \\ 
  Y_2 &&&&&&\cdot & \cdot \\
 Z     &&&&&&&\cdot\end{array} &
  \begin{array}{rl} 
  \fk : & H_1-Z, S\\ 
  E/\fk : & X_1, X_2 \\
  V/\fk : & Y_1, Y_2 \end{array} 
  \\
 \midrule
  D.6-1 & \begin{array}{l} 
  u_{11} = p^2-q^4/4 \\
  u_{12} = q(p-q^2/2) \\ 
  u_{22} = p-q^2/2 \end{array} & 
  &
 \begin{array}{r@{\,}l} 
  X_1 = & \partial_x\\
  H_1 =& -2 x \partial_x - y\partial_y+\partial_u + 2p\partial_p+q\partial_q\\
  Y_1 =& -x^2 \partial_x -xy\partial_y+(x+y^2/2)\partial_u \\
  &+ (1+2xp+yq) \partial_p + (xq+y)\partial_q\\
  X_2 =& \partial_y \\
  Y_2 =& x\partial_y-y\partial_u-q\partial_p-\partial_q\\
  Z = & -\partial_u\\
  & (\mbox{General pt: } x=y=u=p=q=0)
  \end{array} &  \begin{array}{c|cccccccc}
  & X_1 & H_1 & Y_1 & X_2 & Y_2 & Z\\ \hline
  X_1 & \cdot & -2X_1 & H_1 & \cdot & X_2 & \cdot \\ 
  H_1 & & \cdot & -2Y_1 & X_2 & -Y_2 & \cdot \\
  Y_1 & & & \cdot & Y_2 & \cdot & \cdot \\ 
  X_2 &&&&\cdot& Z & \cdot \\ 
  Y_2 &&&&&\cdot & \cdot \\
 Z     &&&&&&\cdot\end{array} &
  \begin{array}{rl} 
  \fk : & H_1+Z\\ 
  E/\fk : & X_1, X_2 \\
  V/\fk : & Y_1, Y_2 \end{array} 
  \\
 \midrule
 D.6-2 & \begin{array}{l} 
  u_{11} = p^\mu \\
  u_{12} = 0 \\ 
  u_{22} = 0 \end{array} & 
 \begin{array}{l} 
  \mu\neq 0, 1, 2, \infty\\
  (a \ne 1, 2/3, 4/3)
  \end{array} &
 \begin{array}{r@{\,}l} 
  S_1 = & -y\partial_y+q\partial_q\\
  S_2 =& -\frac{\mu-1}{\mu-2}x \partial_x - y\partial_y-u\partial_u + \frac{1}{\mu-2}p\partial_p\\
  X =& \partial_y \\
  Y =& y\partial_u+\partial_q\\
  Z = & \partial_u\\
  Z_1 =& \partial_x\\
  & (\mbox{General pt: } x=y=u=q=0,p=1)
  \end{array}
 &  \begin{array}{c|cccccccc}
  & S_1 & S_2 & X & Y & Z & Z_1\\ \hline
  S_1 & \cdot & \cdot & X & -Y & \cdot & \cdot \\ 
  S_2 & & \cdot &  X & \cdot & Z & \frac{\mu-1}{\mu-2}Z_1 \\
  X &&&\cdot& Z & \cdot & \cdot\\ 
  Y &&&&\cdot & \cdot & \cdot\\
  Z    &&&&&\cdot &\cdot\\
  Z_1 &&&&&& \cdot \end{array} &
  \begin{array}{rl} 
  \fk : & S_1\\ 
  E/\fk : & (\mu-2)S_2+Z+Z_1,X \\
  V/\fk : & S_2,Y \end{array} 
  \\
 \cmidrule{2-6} 
 & \begin{array}{l} 
  u_{11} = \exp(p) \\
  u_{12} = 0 \\ 
  u_{22} = 0 \end{array} & 
 \begin{array}{l} 
  \mu=\infty\\
  (a = 4/3)
  \end{array} &
 \begin{array}{r@{\,}l} 
  S_1 = & -y\partial_y+q\partial_q\\
  S_2 =& -x \partial_x - y\partial_y-(u-x)\partial_u + \partial_p\\
  X =& \partial_y \\
  Y =& y\partial_u+\partial_q\\
  Z = & \partial_u\\
  Z_1 =& \partial_x\\
  & (\mbox{General pt: } x=y=u=p=q=0)
  \end{array}
 &  \begin{array}{c|cccccccc}
  & S_1 & S_2 & X & Y & Z & Z_1\\ \hline
  S_1 & \cdot & \cdot & X & -Y & \cdot & \cdot \\ 
  S_2 & & \cdot &  X & \cdot & Z & -Z+Z_1 \\
  X &&&\cdot& Z & \cdot & \cdot\\ 
  Y &&&&\cdot & \cdot & \cdot\\
  Z    &&&&&\cdot &\cdot\\
  Z_1 &&&&&& \cdot \end{array} &
  \begin{array}{rl} 
  \fk : & S_1\\ 
  E/\fk : & S_2+Z_1,X \\
  V/\fk : & S_2,Y \end{array} 
  \\
 \midrule
 D.6-3 & \begin{array}{l} 
  u_{11} = \lambda p^2\frac{\sqrt{u-pq}}{u^{3/2}} \\
  u_{12} = 1+\lambda (pq-2u)\frac{\sqrt{u-pq}}{u^{3/2}}\\ 
  u_{22} = \lambda q^2\frac{\sqrt{u-pq}}{u^{3/2}} \end{array} & 
 \begin{array}{l} 
  \lambda\neq 0, \pm 1/2\\
  (a \ne 0, \pm 3)
  \end{array} &
 \begin{array}{r@{\,}l} 
  X_1 = & \partial_x\\
  H_1 =& -2x\partial_x-2u\partial_u-2q\partial_q\\
  Y_1 =& -x^2\partial_x-u\partial_y-2xu\partial_u+
  \\ & (pq-2u)\partial_p+(q^2-2xq)\partial_q \\
  X_2 =& \partial_y\\
  H_2 = & -2y\partial_y-2u\partial_u-2p\partial_p\\
  Y_2 =& -u\partial_x-y^2\partial_y-2yu\partial_u+
  \\ & (p^2-2py)\partial_p+(pq-2u)\partial_q\\
  & (\mbox{General pt: } x=y=p=q=0,u=1)
  \end{array}
 &  \begin{array}{c|cccccccc}
  & X_1 & H_1 & Y_1 & X_2 & H_2 & Y_2\\ \hline
  X_1 & \cdot & -2X_1 & H_1 & \cdot & \cdot & \cdot \\ 
  H_1 & & \cdot &  -2Y_1 & \cdot & \cdot & \cdot \\
  Y_1 &&&\cdot& \cdot & \cdot & \cdot\\ 
  X_2 &&&&\cdot & -2X_2 & H_2\\
  H_2    &&&&&\cdot &-2Y_2\\
  Y_2 &&&&&& \cdot \end{array} &
  \begin{array}{rl} 
  \fk : & H_1-H_2\\ 
  E/\fk : & X_1+\frac{2\lambda-1}{2\lambda+1}Y_2, \\[1mm] & X_2+\frac{2\lambda-1}{2\lambda+1}Y_1\\[2mm]
  V/\fk : & X_1+Y_2, X_2+Y_1 \end{array} 
  \\
 \cmidrule{2-6} 
  & \begin{array}{l} 
  u_{11} = p^2\sqrt{1-2pq} \\
  u_{12} = (pq-1)\sqrt{1-2pq}\\ 
  u_{22} = q^2\sqrt{1-2pq} \end{array} & 
 \begin{array}{l} 
  (a = \pm 3)
  \end{array} &
 \begin{array}{r@{\,}l} 
  X = & -u\partial_x-y\partial_u+p^2\partial_p+(pq-1)\partial_q\\
  H =& -x\partial_x+y\partial_y+p\partial_p-q\partial_q\\
  Y =& -u\partial_y-x\partial_u+(pq-1)\partial_p+q^2\partial_q \\
  E_1 =& \partial_x\\
  E_2 = & \partial_u\\
  E_3 =& \partial_y\\
  & (\mbox{General pt: } x=y=u=0, p=q=1)
  \end{array}
 &  \begin{array}{c|cccccccc}
  & X & H & Y & E_1 & E_2 & E_3\\ \hline
  X & \cdot & -X & H & \cdot & E_1 & E_2 \\ 
  H & & \cdot &  -Y & E_1 & \cdot & -E_3 \\
  Y &&&\cdot& E_2 & E_3 & \cdot\\ 
  E_1 &&&&\cdot & \cdot & \cdot \\
  E_2  &&&&&\cdot & \cdot \\
  E_3 &&&&&& \cdot \end{array} &
  \begin{array}{rl} 
  \fk : & H \\ 
  E/\fk : & X+E_1, Y+E_2\\
  V/\fk : & X, Y \end{array} 
  \\
 \midrule
 D.6-4 & \begin{array}{l} 
  u_{11} = 0 \\
  u_{12} = \frac{1+pq}{u}\\ 
  u_{22} = 0 \end{array} & 
 &
 \begin{array}{r@{\,}l} 
  X_1 = & \partial_x\\
  H_1 =& -2x\partial_x-u\partial_u+p\partial_p-q\partial_q\\
  Y_1 =& -x^2\partial_x-xu\partial_u+(xp-u)\partial_p-xq\partial_q \\
  X_2 =& \partial_y\\
  H_2 = & -2y\partial_y-u\partial_u-p\partial_p+q\partial_q\\
  Y_2 =& -y^2\partial_y-yu\partial_u-yp\partial_p+(yq-u)\partial_q\\
  & (\mbox{General pt: } x=y=p=q=0,u=1)
  \end{array}
 &  \begin{array}{c|cccccccc}
  & X_1 & H_1 & Y_1 & X_2 & H_2 & Y_2\\ \hline
  X_1 & \cdot & -2X_1 & H_1 & \cdot & \cdot & \cdot \\ 
  H_1 & & \cdot &  -2Y_1 & \cdot & \cdot & \cdot \\
  Y_1 &&&\cdot& \cdot & \cdot & \cdot\\ 
  X_2 &&&&\cdot & -2X_2 & H_2\\
  H_2    &&&&&\cdot &-2Y_2\\
  Y_2 &&&&&& \cdot \end{array} &
  \begin{array}{rl} 
  \fk : & H_1-H_2\\ 
  E/\fk : & X_1-Y_2, X_2-Y_1\\[2mm]
  V/\fk : & Y_1, Y_2 \end{array} 
  \\
  \bottomrule
  \end{longtabu}
 
  \begin{longtabu}{ l  *4{>{$} l<{$}} }
  \caption{Classification of type III cases}\\
  \toprule
  & \mbox{\bf Model} & \mbox{\bf Symmetries} & \mbox{\bf Lie algebra structure} & \mbox{\bf Abstract ILC structure}\\ 
  \midrule
  \endhead
 III.6-1 & \begin{array}{l} 
  u_{11} = p/(x-q) \\
  u_{12} = 0 \\ 
  u_{22} = 0 \end{array} &
 \begin{array}{r@{\,}l} 
  S_1 = & -x\partial_x-u\partial_u-q\partial_q\\
  S_2 = & -y\partial_y-u\partial_u-p\partial_p\\
  N_1 =& \partial_x +y\partial_u + \partial_q\\
  N_2 =& x\partial_y + \frac{x^2}{2}\partial_u + (x-q)\partial_p\\
  N_3 =& \partial_y\\
  N_4 =& -\partial_u\\
  & (\mbox{General pt: } y=u=p=q=0, x=1)
  \end{array} & 
 \begin{array}{c|cccccccc}
  & S_1 & S_2 & N_1 & N_2 & N_3 & N_4\\ \hline
  S_1 & \cdot & \cdot & N_1 & -N_2 & \cdot & N_4 \\ 
  S_2 & & \cdot & \cdot & N_2 & N_3 & N_4 \\
  N_1 & & & \cdot & N_3 & N_4 & \cdot \\ 
  N_2 &&&&\cdot& \cdot & \cdot \\ 
  N_3 &&&&&\cdot & \cdot \\ 
  N_4 &&&&&&\cdot \end{array} & 
  \begin{array}{rl} 
  \fk : & S_2 \\ 
  E/\fk : & S_1, N_3 \\
  V/\fk : & S_1+N_1, N_2 - N_3+\frac{1}{2}N_4 \end{array} 
 \\
  \midrule
  III.6-2 & \begin{array}{l} 
  u_{11} = 2q(2p - q u) \\
  u_{12} = q^2 \\ 
  u_{22} = 0 \end{array} & \begin{array}{r@{\,}l} 
  X = & \partial_x\\
  H =& -2 x \partial_x - y \partial_y + u \partial_u + 3p\partial_p+2q\partial_q\\
  Y =& -x^2 \partial_x - xy\partial_y + (ux+y) \partial_u \\
  & + (3px+u+qy) \partial_p + (2qx+1) \partial_q\\
  S =& -y\partial_y - u\partial_u - p \partial_p\\
  N_1 =& \partial_y\\
  N_2 =& x \partial_y - \partial_u - q \partial_p\\
  & (\mbox{General pt: } x=y=u=q=0, p=1)
  \end{array} &  \begin{array}{c|cccccccc}
  & X & H & Y & S & N_1 & N_2\\ \hline
  X & \cdot & -2X & H & \cdot & \cdot & N_1\\ 
  H & & \cdot & -2Y & \cdot & N_1 & -N_2 \\
  Y & & & \cdot & \cdot & N_2 & \cdot \\ 
  S &&&&\cdot& N_1 & N_2 \\ 
  N_1 &&&&&\cdot & \cdot \\ 
  N_2 &&&&&&\cdot \end{array} &
  \begin{array}{rl} 
  \fk : & H + 3 S\\ 
  E/\fk : & X - N_2, N_1 \\
  V/\fk : & S, Y \end{array} 
  \\
  \bottomrule
  \end{longtabu}

  \begin{longtabu}{ l  *2{>{$} l<{$}} }
   \caption{Basis change from Cartan reduced basis to adapted Lie algebra basis}
   \label{Cartan2adapted}
\\
   \toprule
  & \mbox{\bf Parameters change} &\mbox{\bf Basis change}\\ 
  \midrule
   \endfirsthead
   \caption[]{Basis change from Cartan reduced basis to adapted Lie algebra basis (continued)}
  \\
   \toprule
  & \mbox{\bf Parameters change} &\mbox{\bf Basis change}\\ 
  \midrule
  \endhead 
       \mbox{N.8} & & 
       S_1=-\frac12 e_7-\frac32e_8,
       S_1=-\frac12 e_8-\frac32e_7,
        N_1 = \frac12e_4,
        N_2= -2 e_1, 
        N_3 = -e_6,
        N_4 = -\frac12 e_3,
        N_5 = -e_5,
        N_6 = -2 e_2
            \\
             \midrule
   \mbox{N.7-1} &
  \kappa=-\frac32 + \frac{3a}{2\sqrt{a^2+4}} 
   ,\,\,\kappa\neq -1,-2,\infty
  &
  S_1 = -\frac12 e_6 ,
  S_2= \frac{1}{\sqrt{a^2+4}} e_1 - \frac{3a}{2\sqrt{a^2+4}} e_6 ,
  N_1 = -\frac{1}{\sqrt{a^2+4}(3a+\sqrt{a^2+4})}\left( e_2 + (2a^2-1)e_4 + 2a e_7 \right),
  N_2 = -\frac{1}{2(2a^2-1)}\left(e_2 + \frac{a+\sqrt{a^2+4}}2 e_7\right),
  \\
  &  a=\frac{2\kappa+3}{\sqrt{-\kappa^2-3\kappa}},\,\,  a\neq\pm\frac1{\sqrt2},\pm 2 i 
  & 
  N_3 = \frac{3a-\sqrt{a^2+4}}{8(a^2+4)^{\frac32}}\left(\frac{a+\sqrt{a^2+4}}2 e_3 + e_5 \right),
  N_4 = \frac{3a-\sqrt{a^2+4}}{8(a^2+4)^{\frac32}}\left(\frac{a-\sqrt{a^2+4}}2 e_3 +e_5 \right),
  N_5 = -\frac{1}{2(2a^2-1)}\left( e_2 + \frac{a-\sqrt{a^2+4}}2 e_7 \right)
  \\
   \cmidrule{2-3}
   &
    \kappa=-1
    &
    S_1 = -\frac12 e_6 ,
    S_2= \frac{\sqrt2}3 e_1 - \frac12 e_6 ,
    N_1 = \frac29 e_2 - e_4 - \frac{\sqrt2}9 e_7,
    N_2 = -\frac19 e_2 - \frac{\sqrt2}{9} e_7,
    \\
    &  a=\frac{1}{\sqrt2}
    & 
    N_3 = \frac{\sqrt2}9 e_3 + \frac19 e_5,
    N_4 =-\frac{1}{9\sqrt2}e_3 + \frac19 e_5,
    N_5 = -\frac19 e_2 + \frac{1}{9\sqrt2}e_7
   \\
   \cmidrule{2-3}
     &
      \kappa=\infty
      &
      S_1 = -\frac12 e_6 ,
      S_2= \frac{i}3 e_1 -  e_6 ,
      N_1 = -\frac{i}3 e_2 +3i e_4 - \frac{4}3 e_7,
      N_2 = - \frac12 e_7,
      \\
      &  a=2i
      & 
      N_3 = -\frac{3i}{2} e_3,
      N_4 =-\frac{i}{2}e_3 + \frac{1}{2} e_5,
      N_5 = -\frac{i}6 e_2 - \frac{1}{6}e_7
     \\
   \midrule
   \mbox{N.7-2} & & X=- e_1 -\frac12e_2+e_4+\frac12e_5+2 e_6+\frac12 e_7,
    H=-2e_4-\frac12 e_5-2e_6 -\frac12e_7, Y=-e_4, \\
    & &
    N_1 = \frac12 e_3 , 
    N_2 = e_3 +\frac12 e_5 -\frac12 e_7,
    N_3 = -e_2+e_3+e_5-e_7 ,
    N_4 = -\frac12 e_5 -\frac12 e_7
  \\
  \midrule
   \mbox{N.6-1}&
  \mu=\frac{2a^2-1}{a^2+1}\neq 0,1 ,
  &
     S = \frac1a e_4,
     N_1 =  -\frac{a}{a^2+1}e_1-\frac{a^2+1}{a(2a^2-1)(a^2-2)}e_3 + \frac1a e_4 +\frac{2(a^2-1)}{(2a^2-1)(a^2-2)}e_6,
     N_2 = -\frac{1}{(2a^2-1)(a^2-2)}\left(\frac{a^2+1}{a}e_3 + a^2 e_6\right),
  \\
  & a^2=\frac{1+\mu}{2-\mu}\neq \frac12,2 & 
      N_3 = \frac{1}{(2a^2-1)(a^2-2)}\left(
      \frac{a^3}{a^2+1}e_2 + \frac{a^2+2}{a}e_3 -e_5+\frac{1}{a^2+1}e_6\right),
      N_4 = \frac{1}{(2a^2-1)(a^2-2)}\left(
      \frac{2a}{a^2+1}e_2 -\frac2a e_3 + 2 e_5 -\frac{2}{a^2+1}e_6\right),
   \\ & &   
      N_5 = \frac{1}{(2a^2-1)(a^2-2)}\left(
      \frac{a^3}{a^2+1}e_2 - a e_3 - e_5-\frac{2a^2+1}{a^2+1}e_6\right)
   \\
 \cmidrule{2-3}
 &
  \mu= 0 ,\,\, a^2= \frac12
  &
    S = \frac{\sqrt2}{3} e_3 + \sqrt2 e_4 - \frac19 e_6,
    N_1 = -\frac{\sqrt2}3 e_1 + \sqrt2 e_4 - \frac23e_6,
    N_2 = \frac{2\sqrt2}{3} e_3 - \frac29 e_6,
    N_3 = -\frac{2\sqrt2}{27} e_2 - \frac{10\sqrt2}9 e_3 + \frac49 e_5 - \frac8{27} e_6,
  \\
  & &  
   N_4 = -\frac{8\sqrt2}{27} e_2 + \frac{8\sqrt2}9 e_3 - \frac89 e_5 + \frac{16}{27} e_6,
     N_5 = \frac{\sqrt2}{27} e_2 -\frac{\sqrt2}{9} e_3  - \frac29 e_5 - \frac8{27} e_6
    \\
 \cmidrule{2-3}
 &
  \mu= 1 ,\,\, a^2= 2
  &
     S = \frac{1}{\sqrt2}e_4,
     N_1 = -\frac{\sqrt2}3 e_1 + \frac{1}{\sqrt2} e_4 +\frac13e_6,
     N_2 = -\frac{1}{3\sqrt2} e_3 + \frac29 e_6,
     N_3 = \frac{2\sqrt2}{27}e_2 + \frac{2\sqrt{2}}9 e_3 - \frac19e_5 + \frac1{27} e_6,
 
  \\
  & & 
     N_4 = \frac{2\sqrt{2}}{27} e_2 - \frac{\sqrt{2}}9 e_2 + \frac29 e_5 - \frac2{27} e_6,
     N_5 = \frac{2\sqrt{2}}{27} e_2 - \frac{\sqrt{2}}9 e_3 - \frac19 e_5 - \frac5{27} e_6
   \\  
  \midrule
  \mbox{N.6-2} &
  \mu=\frac12+\frac{3b}{2\sqrt{b^2+4}} ,\,\, b=\frac{2\mu-1}{\sqrt{-\mu^2+\mu+2}},
  & S_1=\frac{-1}{\sqrt{b^2+4}}e_4,
    S_2=\frac{-1}{\sqrt{a^2+4}} e_1 + \frac{1}{(1-2b^2)\sqrt{a^2+4}}e_3 + \frac{2b}{(1-2b^2)\sqrt{a^2+4}}e_6, \\
  & \kappa=-\frac32 + \frac{3a}{2\sqrt{a^2+4}} 
   ,\,\, a=\frac{2\kappa+3}{\sqrt{-\kappa^2-3\kappa}}, &
    N_1=\frac{1}{(2b^2-1)(a^2+4)}\left(\frac{-b-\sqrt{b^2+4}}2 e_2 + \frac{a+\sqrt{a^2+4}}2 e_3 - e_5 + \frac{(a+\sqrt{a^2+4})(\sqrt{b^2+4}+b)}4 e_6\right), 
  \\ & 
   b\neq\pm\frac{1}{\sqrt{2}},\pm 2 i,\mu\neq 0,1,\infty; &
    N_2=\frac{1}{(2b^2-1)(a^2+4)}\left(\frac{b-\sqrt{b^2+4}}2 e_2 - \frac{a+\sqrt{a^2+4}}2 e_3 + e_5 + \frac{(a+\sqrt{a^2+4})(\sqrt{b^2+4}-b)}4 e_6\right), 
    \\ 
  &  a\neq\pm 2 i,\kappa\neq \infty
   & N_3=\frac{1}{(2b^2-1)(a^2+4)}\left(\frac{b-\sqrt{b^2+4}}2 e_2 - \frac{a-\sqrt{a^2+4}}2 e_3 + e_5 + \frac{(a-\sqrt{a^2+4})(\sqrt{b^2+4}-b)}4 e_6\right), 
   \\ & &
      N_4=\frac{1}{(2b^2-1)(a^2+4)}\left(\frac{-b-\sqrt{b^2+4}}2 e_2 + \frac{a-\sqrt{a^2+4}}2 e_3 - e_5 + \frac{(a-\sqrt{a^2+4})(\sqrt{b^2+4}+b)}4 e_6\right), 
      \\
   \cmidrule{2-3}
  &
  \mu=\infty ,\,\, b=2i,\,\,  \kappa=-\frac32 + \frac{3a}{2\sqrt{a^2+4}} 
  & S_1 = \frac{i}{3}e_4, 
      S_2 = \frac{-1}{\sqrt{a^2+4}} \left(e_1-\frac19 e_3 - \frac{4i}{9} e_6\right),
      N_1 = -\frac{i}{3(a^2+4)}\left(e_2 -  \frac{a+\sqrt{a^2+4}}{2} e_6\right) ,
      N_2 = \frac{1}{9(a^2+4)}\left(i e_2  -\frac{a+\sqrt{a^2+4}}{2} e_3 +e_5 - \frac{i(a+\sqrt{a^2+4})}2 e_6\right) 
  \\
  & 
   a=\frac{2\kappa+3}{\sqrt{-\kappa^2-3\kappa}}, a\neq\pm 2 i,\kappa\neq \infty
   &
   N_3 = \frac{1}{9(a^2+4)}\left(ie_2 + \frac{\sqrt{a^2+4}-a }{2}e_3  + e_5 + \frac{i(\sqrt{a^2+4}-a)}2 e_6\right) ,
   N_4 = -\frac{i}{3(a^2+4)}\left(e_2 + \frac{\sqrt{a^2+4}-a}{2} e_6\right) 
   \\
  \cmidrule{2-3} 
  &
  \mu=\infty ,\,\, b=\pm 2i,
  & 
     S_1 = -\frac{i}3 e_4,
     S_2 = -\frac{i}{3} e_1 + \frac{i}{27}e_3 +  \frac{4}{27}e_6 , 
     N_1 = -\frac19 e_6,
     N_2 = \frac{i}{27}e_3 +\frac1{27} e_6,
     N_3 = -\frac{i}{81} e_2 + \frac{i}{81} e_3 + \frac{1}{81} e_5 +\frac{1}{81} e_6,
     N_4 = \frac{i}{27}e_2 - \frac{1}{27} e_6
  \\
  & \kappa=\infty 
   ,\,\, a=\pm 2i
     \\
   \cmidrule{2-3}
  &
  \mu=0 ,\,\, b=\frac{1}{\sqrt{2}}, \,\, \kappa=-\frac32 +\frac{3a}{2\sqrt{a^2+4}} 
  & 
     S_1 = \frac{\sqrt{2}}{3}e_4 + \frac{2}{9(2a^2-1)}\left( \sqrt{2}  e_2 -2a e_3 +e_5 - 2\sqrt{2}a e_6 \right),
     S_2 = -\frac{1}{\sqrt{a^2+4}}\left( e_1 + \frac{\sqrt{2}}3 e_6 \right),
     \\
  &
    a=\frac{2\kappa+3}{\sqrt{-\kappa^2-3\kappa}}, a\neq\pm 2 i,\kappa\neq \infty 
   &
   N_1 = -\frac{1}{9(a^2+4)}\left(\sqrt{2} e_2 - 2 e_5 + \left( a+ \sqrt{a^2+4} \right) \left(e_3 - \frac{1}{\sqrt2} e_6\right)\right),
   N_2 =   -\frac{1}{9(a^2+4)}\left(2\sqrt{2} e_2 + 2 e_5 + \left( \sqrt{a^2+4}+a \right) \left(e_3 + {\sqrt2} e_6\right)\right),
   \\
    & &
   N_3 =   -\frac{1}{9(a^2+4)}\left(2\sqrt{2} e_2 + 2 e_5 + \left( \sqrt{a^2+4}- a\right) \left(e_3 + {\sqrt2} e_6\right)\right),
   N_4 =   -\frac{1}{9(a^2+4)}\left(\sqrt{2} e_2 - 2 e_5 + \left( a- \sqrt{a^2+4} \right) \left(e_3 - \frac1{\sqrt2} e_6\right)\right)
   \\
  \cmidrule{2-3}
  &
  \mu=0 ,\,\, b=\frac{1}{\sqrt{2}},
  & 
     S_1 = \frac{2\sqrt2}{27} e_3 + \frac{\sqrt2}{3} e_4 + \frac{4}{27} e_6,
     S_2 = -\frac{\sqrt2}{3}e_1- \frac29 e_6,
     N_1= - \frac{2\sqrt2}{81}e_2-  \frac{2\sqrt2}{81} e_3 +  \frac{4}{81} e_5 +  \frac{2}{81} e_6 ,
     N_2 = - \frac{4\sqrt2}{81} e_2 +  \frac{2\sqrt2}{81} e_3 -  \frac{4}{81} e_5 + \frac{4}{81} e_6 ,
  \\
  & \kappa=-2, a=-\frac1{\sqrt2}
   &
     N_3 =-\frac{4}{81} \left(\sqrt2 e_2 + \sqrt2 e_3 + e_5 + 2 e_6\right) ,
     N_4 = -\frac{2\sqrt2}{81} \left( e_2 - 2 e_3 - \sqrt2 e_5 +\sqrt2 e_6\right) ,
   \\
  \midrule
  \mbox{D.7} &
  \lambda = \frac{3+4a}{3-4a}, a\ne\pm \frac{3}{4},
  &
 X_1=\frac{2}{4a-3}e_1,
 H_1 = -\frac{1}{4a-3}(2e_5+2(2a-1)e_6-(2a-3)e_7),
 Y_1 = e_3,
 \quad
 X_2 = -e_2,
 H_2 = \frac{1}{4a+3}(2e_5-2(2a+1)e_6-(2a+3)e_7),
 Y_2 = \frac{2}{4a+3}e_4,
 \\
 &
  a = \frac{3}{4}\frac{\lambda-1}{\lambda+1}, \lambda\ne 0, -1 & 
 Z = -\frac{1}{4a-3}(2e_5+e_6+2ae_7)
 \\
 \cmidrule{2-3}
 & a=\pm \frac{3}{4}, \lambda=0 & 
 X_1 = -\frac{1}{3}e_4, H_1 = -\frac{1}{3}e_5 +\frac{5}{6}e_6+\frac{3}{4}e_7, Y_1 = e_2, 
 \quad S = \frac{1}{2}(e_6-e_7), X_2 = -\frac{1}{3}e_3, Y_2 = e_1, 
 \quad Z = -\frac{1}{3}e_5-\frac{1}{6}e_6-\frac{1}{4}e_7
 \\
 \midrule
 \mbox{D.6-1} &
 &
 X_1 =\frac{1}{\sqrt{2}}e_1, H_1 = -\frac{1}{2}e_5+\frac{1}{8}e_6, Y_1 = \frac{1}{\sqrt{2}}e_3,\quad
 X_2 = \sqrt{2}e_2, Y_2 = -\sqrt{2}e_4, Z = \frac{1}{2}e_5 + \frac{3}{8}e_6
 \\
 \midrule
 \mbox{D.6-2} 
 & \mu = \frac{6(a-1)}{3a-4},  a \ne 1, 2/3, 4/3  &
 S_1 = \frac{1}{2}e_6, S_2 = -\frac{3a-2}{12}(2(a-1)e_4+3e_6), 
 X = e_1,
 Y = -\frac{(a-1)(3a-2)}{(3a-4)^2}e_3,
 Z = -\frac{1}{(3a-4)^2}\Big(6e_2+\frac{2(a-1)(3a-2)}{3}e_4-(a-1)(3a-2)e_5-\frac{9a^2-15a-2}{4}e_6\Big),
 \\
 & a = \frac{2}{3}\frac{2\mu-3}{\mu-2}, \mu\ne 0, 1, 2, \infty &
 Z_1 =  \frac{1}{(3a-4)^2}\Big(3(3a-2)e_2+\frac{(a-1)(3a-2)^2}{3}e_4-(a-1)(3a-2)e_5+\frac{9(a-1)(3a-2)}{4}e_6\Big)
 \\
 \cmidrule{2-3}
 & a=4/3, \mu=\infty & 
 S_1=-\frac{1}{2}e_6,
 S_2=-\frac{1}{9}e_4-\frac{1}{2}e_6,
 X=-\frac{1}{6}e_3,
 Y=e_1,
 Z=\frac{3}{2}e_2+\frac{1}{9}e_4-\frac{1}{6}e_5+\frac{3}{8}e_6,
 Z_1=\frac{3}{2}e_2+\frac{1}{9}e_4+\frac{3}{2}e_6
 \\
 \midrule
 \mbox{D.6-3} 
 & \lambda = \frac{3}{2\sqrt{9-a^2}}, a\ne 0, \pm3 &
 X_1 = \lambda e_1 +\frac{\lambda(2\lambda-1)}{\sqrt{4\lambda^2-1}}e_4,
 H_1 =\frac{4\lambda}{3}e_5+e_6,
 Y_1 =-\frac{2}{3}\sqrt{4\lambda^2-1}e_2-\frac{2}{3}(2\lambda+1)e_3,
 \\
 & a = \frac{3\sqrt{4\lambda^2-1}}{2\lambda}, \lambda\ne 0, \pm1/2 &
 X_2 =\frac{2}{3}\sqrt{4\lambda^2-1}e_2+\frac{2}{3}(2\lambda+1)e_3,
 H_2 = \frac{4\lambda}{3}e_5-e_6,
 Y_2 =  -\lambda e_1 -\frac{\lambda(2\lambda+1)}{\sqrt{4\lambda^2-1}}e_4
 \\
 \cmidrule{2-3}
  & a=\pm 3 &  
 X = -e_4, H=e_6, Y=-\frac{2}{3}e_3, \quad
 E_1 = e_1+e_4, E_2 = -\frac{2}{3}e_5, E_3=\frac{2}{3}(e_2+e_3)
 \\
 \midrule
 \mbox{D.6-4}
 & 
 &
 X_1 = -2e_1+\frac{1}{3}e_4, H_1 = \frac{2}{3}e_5+e_6, Y_1 = \frac{1}{3}e_3, \quad
 X_2 = -2e_2+\frac{1}{3}e_3, H_2 = \frac{2}{3}e_5 - e_6, Y_2 = \frac{1}{3}e_4
 \\
 \midrule
  \mbox{III.6-1} & & S_1=2e_1+\frac{5}{4}e_6, S_2=-\frac{1}{2}e_6, N_1=-2e_1+e_4-\frac{9}{4}e_6, N_2=-\frac{1}{2}e_2+\frac{1}{8}e_3+\frac{1}{2}e_5, N_3=-e_2, N_4=-e_2+\frac{3}{4}e_3-e_5
  \\
  \midrule
  \mbox{III.6-2} & & X=2 e_1 + e_5, H=\frac{3}{2} e_3 + \frac{5}{2} e_6, Y=-2 e_4, S=-\frac{1}{2} (e_3+e_6),N_1=-\frac{1}{2} e_2,N_2=e_5
    \\
 \bottomrule
 \end{longtabu}

  \begin{table}[ht]
  \caption{Basis change which reflects redundancy in parameters}
 \begin{tabu}{ l  *2{>{$} l<{$}} }
     \toprule
  & \mbox{\bf Parameters change} &\mbox{\bf Basis change}\\ 
     \midrule
      \mbox{N.7-1} 
      & a\to -a,(\kappa \to -\kappa-3 ), 
     & (e_1,e_2,e_3,e_4,e_5,e_6)\to (-e_1,e_2,-e_3,e_4,e_5,-e_6)
      \\ 
   \midrule
    \mbox{N.6-1} 
    & a\to -a 
   & (e_1,e_2,e_3,e_4,e_5,e_6)\to (-e_1,-e_2,-e_3,-e_4,e_5,e_6)
    \\ 
   \midrule
   \mbox{N.6-2} 
    & a\to -a,(\kappa \to -\kappa-3 ),
   & (e_1,e_2,e_3,e_4,e_5,e_6)\to (-e_1,e_2,-e_3,e_4,e_5,-e_6) \\
   & b\to -b,(\mu \to -\mu+1 )
   &(e_1,e_2,e_3,e_4,e_5,e_6)\to (e_1,-e_2,e_3,-e_4,e_5,-e_6)
 \\
   \midrule
   \mbox{D.7} 
    & a\to -a, (\lambda \to 1/\lambda )
   & (e_1,e_2,e_3,e_4,e_5,e_6,e_7)\to (e_2,e_1,e_4,e_3,e_5,e_6,-e_7) 
   \\
   \midrule
   \mbox{D.6-3}
   & a\to -a, (\lambda\to -\lambda)
   & (e_1,e_2,e_3,e_4,e_5,e_6)\to (e_2,-e_1,e_4,-e_3, e_5, -e_6)\\
   \bottomrule
 \end{tabu}
 \end{table}
 
 \begin{table}[ht]
 \caption{Duality}\label{Ap:Dual}
 \begin{tabu}{ l  *2{>{$} l<{$}} }
   \toprule
  & \mbox{\bf Parameters change} &\mbox{\bf Basis change}\\ 
   \midrule
      \mbox{N.8} 
      & \mbox{(self-dual)}
     & (e_1,e_2,e_3,e_4,e_5,e_6,e_7,e_8)\to  (e_4,e_3,e_2,e_1,-e_5,-e_6,e_8,e_7) \\
     \midrule
     \mbox{N.7-2} 
     & \mbox{(self-dual)}
     & (e_1,e_2,e_3,e_4,e_5,e_6,e_7)\to  (e_4,e_3,e_2,e_1,-e_5,-e_6,-e_7) \\
     \midrule
    \mbox{N.6-1} 
    & \mbox{(self-dual)}
   & (e_1,e_2,e_3,e_4,e_5,e_6)\to (\frac{a^2+1}{a^2}e_4,\frac{a^2+1}{a^2}e_3,\frac{a^2}{a^2+1}e_2,\frac{a^2}{a^2+1}e_1,-e_5,-e_6) \\
   \midrule
   \mbox{N.6-2} 
    & (a,b)\to(b,a), (\mu,\kappa) \to (\kappa+2,\mu-2)\phantom{aaa}
   & (e_1,e_2,e_3,e_4,e_5,e_6)\to (e_4,e_3,e_2,e_1,-e_5,-e_6) \\
   \midrule
   \mbox{D.7}
   & \mbox{(self-dual)}
   & (e_1,e_2,e_3,e_4,e_5,e_6,e_7)\to (e_3,e_4,e_1,e_2,-e_5,-e_6,-e_7) \\
   \midrule
   \mbox{D.6-1}
   & \mbox{(self-dual)}
   & (e_1,e_2,e_3,e_4,e_5,e_6)\to (e_3,e_4,e_1,e_2,-e_5,-e_6) \\
   \midrule
   \mbox{D.6-2}
   &\mbox{(self-dual)}
   & (e_1,e_2,e_3,e_4,e_5,e_6)\to (e_3,-\frac{(a-1)(3a-2)}{9}e_4,e_1,-\frac{9}{(a-1)(3a-2)}e_2,-e_5,-e_6) \\
   \midrule
   \mbox{D.6-3}
   &\mbox{(self-dual)}
   &  (e_1,e_2,e_3,e_4,e_5,e_6)\to (e_3,e_4,e_1,e_2,-e_5,-e_6) \\
   \bottomrule
 \end{tabu}
 \end{table}

   \end{landscape}
\end{document}
